% !TeX encoding = utf-8
% !TeX spellcheck = en_US
% !TeX program = pdflatex

\pdfoutput=1
\synctex=1
\documentclass[11pt,reqno,final]{amsart}

\usepackage[margin=23mm]{geometry}

\usepackage{setspace}
\setlength{\parskip}{\medskipamount}
\setlength{\parindent}{0pt}
\usepackage{indentfirst}

\usepackage[T1]{fontenc}
\usepackage[utf8]{inputenc}

\usepackage[backend=biber,style=numeric,
url=false,doi=true,eprint=true,isbn=false,
hyperref=auto,backref=false]{biblatex}
\addbibresource{references.bib}

\usepackage{amsmath}
\usepackage{amssymb}
\usepackage{stmaryrd}
\usepackage{mathrsfs}
\usepackage{tikz-cd}

\newtheorem{theorem}{Theorem}[section]
\newtheorem{lemma}[theorem]{Lemma}
\newtheorem{proposition}[theorem]{Proposition}
\newtheorem{corollary}[theorem]{Corollary}
\newtheorem{definition}[theorem]{Definition}
\newtheorem{remark}[theorem]{Remark}
\newtheorem{example}[theorem]{Example}

\newtheoremstyle{intro}% name
  {12pt}%      Space above, empty = `usual value'
  {12pt}%      Space below
  {\itshape}%  Body font
  {}%          Indent amount (empty = no indent, \parindent = para indent)
  {\bfseries}% Thm head font
  {.}%         Punctuation after thm head
  {.5em}%      Space after thm head: " " = normal interword space;
        %      \newline = linebreak
  {}%          Thm head spec

\theoremstyle{intro}
\newtheorem{introthm}{Theorem}

\newtheorem*{introques*}{Question}

\setcounter{equation}{0}

\let\oldmarginpar\marginpar
\renewcommand\marginpar[1]{\-\oldmarginpar[\raggedleft\footnotesize #1]{\raggedright\footnotesize #1}}

\newsavebox{\overlongequation}
\newenvironment{dontbotheriftheequationisoverlong}
 {\begin{displaymath}\begin{lrbox}{\overlongequation}$\displaystyle}
 {$\end{lrbox}\makebox[0pt]{\usebox{\overlongequation}}\end{displaymath}}

\usepackage[pagebackref=false,unicode=true,bookmarks=true,
colorlinks=true,linktoc=page,linkcolor=red,citecolor=blue,
pdfstartview={FitH},final=true]{hyperref}
\hypersetup{hidelinks}

\usepackage[color]{showkeys}
\definecolor{refkey}{rgb}{1,0,0}
\definecolor{labelkey}{rgb}{0,0,1}

\allowdisplaybreaks[3]

\binoppenalty=\maxdimen
\relpenalty=\maxdimen
\setlength{\emergencystretch}{3em}

\DeclareMathOperator{\hypercohomology}{\mathbb{H}}
\DeclareMathOperator{\hochschildcohomology}{\textit{HH}}

\DeclareMathOperator{\pr}{pr}

\DeclareMathOperator{\Hom}{Hom}

\DeclareMathOperator{\Der}{Der}
\DeclareMathOperator{\tot}{tot}
\DeclareMathOperator{\id}{id}
\DeclareMathOperator{\rk}{rk}

\DeclareMathOperator{\pbw}{pbw}

\DeclareMathOperator{\Bott}{Bott}

\DeclareMathOperator{\CE}{CE}
\DeclareMathOperator{\DR}{DR}

\DeclareMathOperator{\poly}{poly}
\DeclareMathOperator{\ver}{ver}

\newcommand{\NN}{\mathbb{N}}
\newcommand{\NO}{\mathbb{N}_0}
\newcommand{\ZZ}{\mathbb{Z}}

\newcommand{\RR}{\mathbb{R}}
\newcommand{\CC}{\mathbb{C}}
\newcommand{\into}{\hookrightarrow}
\newcommand{\onto}{\twoheadrightarrow}
\newcommand{\isomorphism}{\cong}
\newcommand{\xto}[1]{\xrightarrow{#1}}

\newcommand{\dual}{^\vee}
\newcommand{\inv}{^{-1}}
\newcommand{\DD}{\mathfrak{D}}
\newcommand{\XX}{\mathfrak{X}}
\newcommand{\argument}{\mathord{\color{black!25}-}}

\newcommand{\degree}[1]{\left| #1 \right|}
\newcommand{\length}[1]{\left| #1 \right|}
\newcommand{\frakg}{\mathfrak{g}}
\newcommand{\lie}[1]{\mathcal{L}_{#1}}
\newcommand{\interior}[1]{\iota_{#1}}
\newcommand{\exterior}{d_{\DR}}
\newcommand{\hochschild}{d_{\mathscr{H}}}
\newcommand{\Tpoly}[1]{\mathcal{T}_{\poly}^{#1}}
\newcommand{\Tpolya}[1]{\Lambda^{#1+1}B}
\newcommand{\Dpoly}[1]{\mathcal{D}_{\poly}^{#1}}
\newcommand{\verticalTpoly}[1]{\mathscr{T}_{\poly}^{#1}}
\newcommand{\verticalDpoly}[1]{\mathscr{D}_{\poly}^{#1}}
\newcommand{\verticalX}{\XX_{\ver}}
\newcommand{\verticalD}{\DD_{\ver}}
\newcommand{\shuffle}[2]{\mathfrak{S}_{#1}^{#2}}
\newcommand{\sections}[1]{\Gamma(#1)}
\newcommand{\enveloping}[1]{\mathcal{U}(#1)}
\newcommand{\jet}[1]{\mathcal{J}({#1})}
\newcommand{\bracket}[2]{[#1,#2]}
\newcommand{\schouten}[2]{[#1,#2]}
\newcommand{\gerstenhaber}[2]{\llbracket #1,#2\rrbracket}
\newcommand{\duality}[2]{\left\langle#1\middle|#2\right\rangle}
\newcommand{\anchor}{\varrho}
\newcommand{\baranchor}{\bar{\anchor}}
\newcommand{\KK}{\Bbbk}
\newcommand{\cn}{\nabla^\lightning}	
\newcommand{\cnt}{\widetilde{\nabla}^\lightning}

\newcommand{\cA}{\mathcal{A}}
\newcommand{\cB}{\mathcal{B}}
\newcommand{\cL}{\mathcal{L}}
\newcommand{\cE}{\mathcal{E}}
\newcommand{\EE}{\mathcal{E}}
\newcommand{\cM}{\mathcal{M}}
\newcommand{\cF}{\mathcal{F}}
\newcommand{\cR}{\mathcal{R}}
\newcommand{\cQ}{\mathcal{Q}}

\newcommand{\po}{\pr_0}
\newcommand{\dabott}{d_A^{\Bott}}
\newcommand{\dace}{d_A}
\newcommand{\dau}{d_A^{\mathcal{U}}}
\newcommand{\UcL}[1]{\enveloping{\cL}^{\otimes #1+1}}
\newcommand{\etendu}[1]{#1_\natural}
\newcommand{\perturbed}[1]{\breve{#1}}
\newcommand{\dbott}{d_A^{\Bott}}
\newcommand{\dhoch}{\dau+\mathfrak{d}_\mathscr{H}}
\newcommand{\wa}{\sections{\Lambda^\bullet A\dual}}

\newcommand{\pbwb}{\widetilde{\pbw}}
\newcommand{\nablab}{\nabla^G}
\newcommand{\nablat}{\widetilde{\nabla}}

\newcommand{\perturbation}{\rho}
\newcommand{\suspended}{s}

\title[Polyvector fields and polydifferential operators associated with Lie pairs]
{Polyvector fields and polydifferential operators\\ associated with Lie pairs}

\thanks{Research partially supported by NSF grants DMS-1406668,
DMS-1707545 and DMS-2001599, and NSA grant H98230-14-1-0153.}

\author{Ruggero Bandiera}
\address{Dipartimento di Matematica, Sapienza Università di Roma}
\email{bandiera@mat.uniroma1.it}

\author{Mathieu Stiénon}
\address{Department of Mathematics, Pennsylvania State University}
\email{stienon@psu.edu}

\author{Ping Xu}
\address{Department of Mathematics, Pennsylvania State University}
\email{ping@math.psu.edu}

\begin{document}

%\listoflabels\newpage

\begin{abstract}
We prove that the spaces
$\tot\big(\wa\otimes_R\Tpoly{\bullet}\big)$
and $\tot\big(\wa\otimes_R\Dpoly{\bullet}\big)$
associated with a Lie pair $(L,A)$ each carry an $L_\infty$ algebra structure
canonical up to an $L_\infty$ isomorphism with the identity map as linear part.
These two spaces serve, respectively, as replacements
for the spaces of formal polyvector fields and formal polydifferential
operators on the Lie pair $(L,A)$.
Consequently, both $\hypercohomology^\bullet_{\CE}(A,\Tpoly{\bullet})$
and $\hypercohomology^\bullet_{\CE}(A,\Dpoly{\bullet})$
admit unique Gerstenhaber algebra structures.
Our approach is based on homotopy transfer
and the construction of a Fedosov dg Lie algebroid
(i.e.\ a dg foliation on a Fedosov dg manifold).
\end{abstract}

\maketitle

\tableofcontents

\section*{Introduction}

The algebraic structures of the spaces of polyvector fields
and of polydifferential operators on a manifold play a crucial role
in deformation quantization: Kontsevich's famous formality theorem
asserts that, for a smooth manifold $M$, the Hochschild--Kostant--Rosenberg map
extends to an $L_\infty$ quasi-isomorphism from the dgla of polyvector fields
on $M$ to the dgla of polydifferential operators on $M$
\cite{MR2062626,MR2699544,MR2102846,MR1914788,MR1944574}.

In this paper, we study the algebraic structures of ``polyvector fields''
and ``polydifferential operators'' on Lie pairs.
Throughout the paper, we use the symbol $\KK$ to denote
either of the fields $\RR$ and $\CC$.
A \emph{Lie algebroid} over $\KK$ is a $\KK$-vector bundle $L\to M$ together
with a bundle map $\anchor:L\to T_M\otimes_\RR\KK$ called \emph{anchor}
and a Lie bracket $\bracket{\argument}{\argument}$ on the sections of $L$ such that
$\anchor:\sections{L}\to\XX(M)\otimes_\RR\KK$ is a morphism of Lie algebras
and \[ \bracket{X}{fY}=f\bracket{X}{Y}+\anchor_{X}(f)Y ,\] for all $X,Y\in\sections{L}$ and $f\in C^\infty(M,\KK)$.
By a \emph{Lie pair} $(L,A)$, we mean an inclusion $A\into L$ of Lie algebroids
over a smooth manifold $M$.

Lie pairs arise naturally in a number of classical areas of mathematics such as Lie theory,
complex geometry, foliation theory, and Poisson geometry.
A complex manifold $X$ determines a Lie pair over $\CC$: viz.\ $L= T_X\otimes\CC$ and $A=T_X^{0,1}$.
A foliation $F$ on a smooth manifold $M$ determines a Lie pair over $\RR$: viz.\ $L=T_M$
and $A=T_F$ is the integrable distribution on $M$ tangent to the foliation $F$.
A manifold equipped with an action of a Lie algebra $\frakg$ gives rise to a Lie pair in a natural way
(see~\cite[Example~5.5]{MR1460632} and~\cite{MR1650045,MR3650387}).

Given a Lie pair $(L,A)$, the quotient $L/A$ is naturally an $A$-module \cite{MR3439229}.
When $L$ is the tangent bundle to a manifold $M$ and $A$ is an integrable distribution on $M$,
the infinitesimal $A$-action on $L/A$ is given by the Bott connection \cite{MR0362335}.

A Lie pair $(L,A)$ gives rise to two natural cochain complexes
\begin{equation}\label{ginger}
\big(\tot\big(\wa\otimes_R\Tpoly{\bullet}\big),d_A^{\Bott}\big)
\quad\text{and}\quad
\big(\tot\big(\wa\otimes_R\Dpoly{\bullet}\big),\dhoch\big)
\end{equation}
constructed as follows.
Denoting the algebra of smooth functions on the manifold $M$ by $R$,
we set $\Tpoly{\bullet}=\bigoplus_{k=-1}^{\infty}\Tpoly{k}$, where $\Tpoly{-1}=R$ and
$\Tpoly{k}=\sections{\Lambda^{k+1}(L/A)}$ for $k\geqslant 0$.
The Bott $A$-connection on $L/A$ makes every $\Tpoly{k}$ an ${A}$-module.
We can thus consider the complex of $A$-modules with trivial differential
\begin{equation*} \begin{tikzcd}
0 \arrow{r} & \Tpoly{-1} \arrow{r}{0} &
\Tpoly{0} \arrow{r}{0} & \Tpoly{1} \arrow{r}{0} &
\Tpoly{2} \arrow{r}{0} & \cdots
\end{tikzcd} \end{equation*}
Its Chevalley--Eilenberg cohomology
$\hypercohomology^\bullet_{\CE}(A,\Tpoly{\bullet})$
is the cohomology of the total cochain complex
\begin{equation}\label{lemon}
\big(\tot\big(\wa\otimes_R\Tpoly{\bullet}\big),\dabott\big)
.\end{equation}

Similarly, denoting the universal enveloping algebra of the Lie algebroid $L$
by $\enveloping{L}$, we set $\Dpoly{\bullet}=\bigoplus_{k=-1}^{\infty}\Dpoly{k}$,
where $\Dpoly{-1}=R$; $\Dpoly{0}=\frac{\enveloping{L}}{\enveloping{L}\sections{A}}$; and
$\Dpoly{k}$ with $k\geqslant 1$ is the tensor product $\Dpoly{0}\otimes_R\cdots\otimes_R\Dpoly{0}$
of $(k+1)$-copies of the left $R$-module $\Dpoly{0}$.
Multiplication in $\enveloping{L}$ from the left by elements of $\sections{A}$ (and $R$) induces
an $A$-module structure on the quotient $\frac{\enveloping{L}}{\enveloping{L}\sections{A}}$.
This action of $A$ on $\Dpoly{0}$ extends naturally to an action of $A$
on $\Dpoly{k}$ for each $k\geqslant 1$.
In fact, $\Dpoly{0}$ is a cocommutative coassociative coalgebra over $R$
whose comultiplication $\Delta:\Dpoly{0}\to\Dpoly{0}\otimes_R\Dpoly{0}$
is a morphism of $A$-modules. Therefore, the induced Hochschild complex
\[ \begin{tikzcd}
0 \arrow{r} & \Dpoly{-1} \arrow{r}{\hochschild} & \Dpoly{0} \arrow{r}{\hochschild} &
\Dpoly{1} \arrow{r}{\hochschild} & \Dpoly{2} \arrow{r}{\hochschild} & \cdots
\end{tikzcd} \]
is a complex of $A$-modules.
Its Chevalley--Eilenberg cohomology $\hypercohomology^\bullet_{\CE}(A,\Dpoly{\bullet})$
is the cohomology of the total cochain complex
\begin{equation}\label{lime}
\big(\tot\big(\wa\otimes_R\Dpoly{\bullet}\big),\dhoch\big)
,\end{equation}
where we use the abbreviated symbol $\mathfrak{d}_{\mathscr{H}}$
to denote the operator $\id\otimes\hochschild$
--- see Equation~\eqref{eq:Rome} for more details.

For instance, for the Lie pair $L=T_X\otimes\CC$ and $A=T^{0,1}_X$
arising from any complex manifold $X$,
the cochain complexes \eqref{lemon} and~\eqref{lime} are precisely the complexes
$\big(\Omega^{0,\bullet}(\Tpoly{\bullet}(X)),\bar{\partial}\big)$
and $\big(\Omega^{0,\bullet}(\Dpoly{\bullet}(X)),\bar{\partial}+\hochschild \big)$,
which are known to carry differential graded Lie algebra (a.k.a.\ dgla) structures.
The corresponding Chevalley--Eilenberg cohomology groups
$\hypercohomology_{\CE}^{\bullet}(A,\Tpoly{\bullet})$
and $\hypercohomology_{\CE}^{\bullet}(A,\Dpoly{\bullet})$
are isomorphic to the sheaf cohomology group $\hypercohomology^\bullet(X,\Lambda^\bullet T_X)$
and the Hochschild cohomology group $\hochschildcohomology^{\bullet}(X)$, respectively.

For a generic Lie pair $(L,A)$, however, there is no obvious way
to upgrade the cochain complexes \eqref{ginger} to dgla's (or $L_\infty$ algebras).
Here is an example. The cochain complex
$\big(\tot\Omega^{\bullet}_F(\Lambda^\bullet(T_M/T_F)),d_F^{\Bott}\big)$
associated with the Lie pair $(T_M,T_F)$ encoding a foliation $F$
on a smooth manifold $M$ may be thought of as the space
of formal polyvector fields on the leaf space of the foliation
\cite{MR3277952, MR3300319}, or more precisely,
on the differentiable stack \cite{MR2817778}
presented by the holonomy groupoid of the foliation $F$.
Similarly, denoting the associative algebra of differential operators on $M$ by $\DD(M)$,
the cochain complex
$\Big(\tot\Omega^{\bullet}_F\big(\bigotimes^\bullet_R
\big(\frac{\DD(M)}{\DD(M)\cdot\sections{T_F}}\big)\big),\dhoch\Big)$
may be thought of as the space of formal polydifferential operators
on the leaf space of the foliation, or more precisely,
on the differentiable stack presented by the holonomy groupoid of the foliation $F$.
Unless the foliation $F$ admits a transversal foliation \cite{arXiv:2103.08096},
there are no obvious dgla (or $L_\infty$ algebra) structures on these cochain complexes.

On the other hand, both $\hypercohomology^\bullet_{\CE}(A,\Tpoly{\bullet})$
and $\hypercohomology^\bullet_{\CE}(A,\Dpoly{\bullet})$
admit obvious associative algebra structures ---
the multiplications in cohomology proceed
from the wedge product in $\Tpoly{\bullet}$
and the tensor product of left $R$-modules in $\Dpoly{\bullet}$.

We are thus naturally led to the following central twofold question:

\begin{introques*}
\strut
\begin{enumerate}
\item Do the cohomology groups $\hypercohomology^\bullet_{\CE}(A,\Tpoly{\bullet})$
and $\hypercohomology^\bullet_{\CE}(A,\Dpoly{\bullet})$
admit canonical Gerstenhaber algebra structures?
\item Do the two cochain complexes
\[ \big(\tot\big(\wa\otimes_R\Tpoly{\bullet}\big),\dbott\big)
\quad\text{and}\quad
\big(\tot\big(\wa\otimes_R\Dpoly{\bullet}\big),\dhoch\big) \]
associated with a Lie pair $(L,A)$ admit $L_\infty$ algebra structures
compatible ``in a certain sense'' with their respective associative multiplications?
If so, are these $L_\infty$ structures canonical?
\end{enumerate}
\end{introques*}

To answer this question, we introduce the notion of Fedosov dg Lie algebroid,
we establish a pair of contractions, and we apply the homotopy transfer theorem
of $L_\infty$ algebras \cite{MR3276839,MR1932522,MR1950958}
(see also \cite{arXiv:1705.02880,MR2361936,arXiv:1807.03086, MR3318161,MR3323983}).
Roughly speaking, given a Lie pair $(L,A)$,
we construct a geometric object called Fedosov dg Lie algebroid,
which engenders a pair of natural dgla's whose respective cohomologies
carry natural Gerstenhaber algebra structures.
The pair of cochain complexes underlying these engendered dgla's are homotopy equivalent
(in a style reminiscent of Dolgushev's Fedosov resolutions \cite{MR2102846})
to the cochain complexes \eqref{ginger} associated with the Lie pair $(L,A)$.
The latter complexes then inherit $L_\infty$ structures by homotopy transfer.

Hereunder, we proceed to give a more detailed outline of the construction.

Given a Lie pair $(L,A)$ and having chosen some additional geometric data,
one can endow the graded manifold $\cM=L[1]\oplus L/A$ with a homological vector
field $Q$ encoding the formal geometry of the Lie pair.
The resulting dg manifold $(\cM,Q)$ is called a Fedosov dg manifold \cite{MR4150934}.
It turns out that there exists a natural dg integrable distribution
$\cF\subset T_{\cM}$ on $(\cM,Q)$.
In other words, the tangent dg Lie algebroid $T_{\cM}\to\cM$
arising from the Fedosov dg manifold $(\cM,Q)$
admits a natural dg Lie subalgebroid $\cF\to\cM$.
We call this dg Lie algebroid $\cF\to\cM$ a Fedosov dg Lie algebroid.

Lie algebroids being generalizations of tangent bundles,
the notions of polyvector fields and of polydifferential operators admit generalizations
to the broader context of Lie algebroids.
The spaces of (generalized) polyvectors fields and of (generalized) polydifferential operators
each admit a natural dgla structure and the cohomology of this dgla
is in fact a Gerstenhaber algebra \cite{MR1675117,MR1815717}.
The notions of polyvector fields and of polydifferential operators
can be extended further in an appropriate sense to the context of \emph{dg} Lie algebroids.
This yields again a pair of dgla's whose cohomologies are Gerstenhaber algebras.

More precisely, in the context of a dg Lie algebroid $\cL\to\cM$,
a $k$-vector field is a section of the vector bundle $\Lambda^k\cL\to\cM$
while a $k$-differential operator is an element of
$\big(\suspended\enveloping{\cL}\big)^{\otimes k}$,
the tensor product (as left $C^\infty (\cM )$-modules)
of $k$ copies of the suspended universal enveloping algebra
$\suspended\enveloping{\cL}$.

It is clear that
the differential $\cQ:\sections{\cL}\to\sections{\cL}$,
the homological vector field $Q:C^\infty(\cM)\to C^\infty(\cM)$,
and the Lie bracket on $\sections{\cL}$ encoding the dg Lie algebroid structure
of $\cL\to\cM$ extend naturally to a degree $(+1)$ differential
$\cQ: \sections{\Lambda^{k+1}\cL} \to \sections{\Lambda^{k+1}\cL}$
and a Schouten bracket
$\schouten{\argument}{\argument}:\sections{\Lambda^{u+1}\cL}\otimes
\sections{\Lambda^{v+1}\cL}\to\sections{\Lambda^{u+v+1}\cL}$
--- see Section~\ref{Najaf} for more details.
The resulting triple $\big(\tot_\oplus\sections{\Lambda^{\bullet+1}\cL},
\cQ,\schouten{\argument}{\argument}\big)$ is a dgla.

The universal enveloping algebra of a \emph{dg} Lie algebroid $\cL\to\cM$,
which is defined by adapting the construction of the universal enveloping algebra
of a Lie algebroid, is a dg Hopf algebroid $\enveloping{\cL}$ over the dgca $R=C^\infty(\cM)$.
For each $k\geqslant 0$, the dg structure on the dg Lie algebroid $\cL\to\cM$
determines a differential
$\cQ:\big(\suspended\enveloping{\cL}\big)^{\otimes k+1}
\to\big(\suspended\enveloping{\cL}\big)^{\otimes k+1}$
of degree $(+1)$.
A Hochschild coboundary differential
$\hochschild:\big(\suspended\enveloping{\cL}\big)^{\otimes k}
\to\big(\suspended\enveloping{\cL}\big)^{\otimes k+1}$
and a Gerstenhaber bracket
$\gerstenhaber{\argument}{\argument}:
\big(\suspended\enveloping{\cL}\big)^{\otimes u+1}
\otimes\big(\suspended\enveloping{\cL}\big)^{\otimes v+1}
\to\big(\suspended\enveloping{\cL}\big)^{\otimes u+v}$
can be defined explicitly in terms of the dg Hopf algebroid structure.
The resulting triple
$\big(\tot_\oplus\big(\suspended\enveloping{\cL}\big)^{\otimes \bullet+1},
\cQ+\hochschild,\gerstenhaber{\argument}{\argument}\big)$ is a dgla.

The ``polyvector fields'' and ``polydifferential operators''
associated with a \emph{Fedosov} dg Lie algebroid $\cF\to\cM$
may be viewed geometrically as polyvector fields and polydifferential operators
tangent to the dg foliation $\cF$ on the Fedosov dg manifold $(\cM,Q)$.
In fact, one can identify the ``polyvector fields''
$\big(\tot_\oplus\sections{\Lambda^{\bullet+1}\cF},\cQ\big)$
and ``polydifferential operators''
$\big(\tot_\oplus\big(\suspended\enveloping{\cF}\big)^{\otimes\bullet+1},
\cQ+\hochschild\big)$
associated with $\cF\to\cM$ to a pair of cochain complexes
\begin{equation}\label{drizzle}
\big(\tot(\sections{\Lambda^\bullet L\dual}\otimes_R\verticalTpoly{\bullet}),
\lie{Q}\big) \quad\text{and}\quad
\big(\tot(\sections{\Lambda^\bullet L\dual}\otimes_R\verticalDpoly{\bullet}),
\gerstenhaber{Q+m}{\argument}\Big)
,\end{equation}
where $\verticalTpoly{\bullet}$ denotes the formal polyvector fields and $\verticalDpoly{\bullet}$
the formal polydifferential operators tangent to the fibers of the vector bundle $L/A\to M$.

The next step and key ingredient of the construction consists
in establishing the following pair of contractions of Dolgushev--Fedosov type:
\begin{equation}\label{eq:A}
\begin{tikzcd}[cramped]
\Big(\tot\big(\wa\otimes_R
\Tpoly{\bullet}\big)
,d_A^{\Bott}\Big)
\arrow[r, " ", shift left] &
\Big(\tot\big(\sections{\Lambda^\bullet L\dual}\otimes_R
\verticalTpoly{\bullet}\big), \lie{Q}\Big)
\arrow[l, " ", shift left]
\arrow[loop, " ",out=5,in=-5,looseness = 3]
\end{tikzcd},
\end{equation}
and
\begin{equation}
\label{eq:B}
\begin{tikzcd}[cramped]
\Big(\tot\big(\wa\otimes_R\Dpoly{\bullet}\big),
\dhoch \Big)
\arrow[r, shift left, " "] &
\Big(\tot\big(\sections{\Lambda^\bullet L\dual}\otimes_R\verticalDpoly{\bullet}\big),
\gerstenhaber{Q+ m}{\argument}\Big)
\arrow[l, shift left, " "]
\arrow[loop, " ", out=5,in=-5,looseness = 3]
\end{tikzcd}.
\end{equation}

Finally, we use the homotopy transfer theorem for $L_\infty$ algebras
\cite{MR3276839,MR1932522,MR1950958}
--- see also \cite{arXiv:1705.02880,MR2361936,arXiv:1807.03086, MR3318161,MR3323983} ---
to push the $L_\infty$ structures carried by the complexes \eqref{drizzle}
(the r.h.s.\ of the contractions \eqref{eq:A} and~\eqref{eq:B})
to the complexes \eqref{ginger} (the l.h.s.\ of the contractions \eqref{eq:A} and~\eqref{eq:B}).
Furthermore, we prove that the resulting $L_\infty$ algebra structures on the complexes \eqref{ginger}
are unique up to $L_\infty$ isomorphisms having the identity map as linear part
and are therefore (essentially) independent of the choice of geometric data
made in the construction of the Fedosov dg Lie algebroid.
Moreover, we prove that the two cochain maps in the above contractions
\eqref{eq:A} and~\eqref{eq:B}
are compatible with the associative algebra structures
given by the wedge and cup products respectively.

Finally, combining these facts, we are able to prove the following theorem,
which is the main result of the paper.

\begin{introthm}\label{Macau}
Let $(L, A)$ be a Lie pair.
\begin{itemize}
\item[(1)] The cohomology groups $\hypercohomology^\bullet_{\CE}(A,\Tpoly{\bullet})$
and $\hypercohomology^\bullet_{\CE}(A,\Dpoly{\bullet})$
admit canonical Gerstenhaber algebra structures.
\item[(2a)] The spaces $\tot\big(\wa\otimes_R\Tpoly{\bullet}\big)$
and $\tot\big(\wa\otimes_R\Dpoly{\bullet}\big)$
admit $L_\infty$ algebra structures with the operators $d_A^{\Bott}$ and $\dhoch$
as their respective unary brackets.
\item[(2b)] These $L_\infty$ algebra structures are unique up to $L_\infty$ isomorphisms
having the identity map as linear part.
\item[(2c)] The binary brackets are compatible with the associative products
(viz.\ the wedge product and the cup product respectively)
in the sense that the graded Leibniz rule holds up to homotopy.
\end{itemize}
\end{introthm}

The above theorem is a synthesis of Propositions~\ref{Bari}, \ref{thm:Naples}
and~\ref{prop:uniqueness} from this paper.
We remark that, in Theorem~\ref{Macau}~(2c),
we only claim what is needed to ensure that the resulting cohomology groups
are Gerstenhaber algebras, but in fact the $L_\infty$
and associative algebra structures should be compatible in a much stronger and refined sense.
More precisely, the space of polyvector fields
$\tot\big(\wa\otimes_R\Tpoly{\bullet}\big)$
and that of polydifferential operators
$\tot\big(\wa\otimes_R\Dpoly{\bullet}\big)$
should both carry much richer algebraic structures,
such as the $\mathsf{Ger}_\infty$ algebras investigated by Tamarkin \cite{MR2699544}
or the $\mathsf{Br}_\infty$ algebras studied by Willwacher~\cite{MR3522653}.
In fact, it should again be possible to construct such structures explicitly
via homotopy transfer along Dolgushev--Fedosov contractions.
We will return to this issue in a forthcoming work.

When the Lie algebroid $L$ arises as the matched sum $A\bowtie B$
of a matched pair $(A, B)$ of Lie algebroids,
i.e.\ when the short exact sequence $0\to A \to L \to L/A \to 0$
admits a splitting $j: L/A\to L$ whose image $B:=j(L/A)$ is a Lie subalgebroid of $L$,
the $L_\infty$ algebra structures
on $\tot\big(\wa\otimes_R\Tpoly{\bullet}\big)$
and $\tot\big(\wa\otimes_R\Dpoly{\bullet}\big)$
in Theorem~\ref{Macau} turn out to be dgla's and admit a much simpler description
than in the case of a generic Lie pair.
Indeed, in the case of a matched pair, the dg manifold $(A[1]\oplus B,\dabott)$
is a dg Lie algebroid over the dg manifold $(A[1],\dace)$
whose associated cochain complexes of polyvector fields
and polydifferential operators are isomorphic to
$\big(\tot\sections{\Lambda^\bullet A\dual\otimes\Lambda^{\bullet+1}B},
\dabott\big)$ and
$\big(\tot\big(\wa\otimes_R\enveloping{B}^{\otimes\bullet+1}\big),\dhoch\big)$, respectively,
and are therefore naturally dgla's when endowed with the usual Schouten bracket
and the usual Gerstenhaber bracket, respectively.

\begin{introthm}
\label{thm:A}
If, in a Lie pair $(L,A)$, the Lie algebroid $L$ arises
as the matched sum $A\bowtie B$ of a matched pair $(A, B)$ of Lie algebroids
--- i.e.\ the short exact sequence $0\to A \to L \to L/A \to 0$ admits a splitting $j: L/A\to L$
whose image $B:=j(L/A)$ is a Lie subalgebroid of $L$ ---
then the $L_\infty$ algebra structures of Theorem~\ref{Macau} on
\[ \tot\big(\wa\otimes_R\Tpoly{\bullet}\big)
\quad\text{and}\quad \tot\big(\wa\otimes_R\Dpoly{\bullet}\big) \]
are actually dgla's and are respectively isomorphic to
\[ \big(\tot\sections{\Lambda^\bullet A\dual\otimes\Lambda^{\bullet+1}B},
\dabott,\schouten{\argument}{\argument}\big) \quad\text{and}\quad
\big(\tot\big(\wa\otimes_R\enveloping{B}^{\otimes\bullet+1}\big),
\dhoch,\gerstenhaber{\argument}{\argument}\big) ,\]
the dgla's of polyvector fields and of polydifferential operators
arising from the dg Lie algebroid $A[1]\oplus B\to A[1]$.
The isomorphisms are canonical.
Furthermore, the Gerstenhaber algebra structures on the corresponding cohomology groups
\[ \hypercohomology^\bullet_{\CE}(A,\Tpoly{\bullet})
\quad\text{and}\quad \hypercohomology^\bullet_{\CE}(A,\Dpoly{\bullet}) \]
are isomorphic to the canonical Gerstenhaber algebra structures on
\[ \hypercohomology^\bullet_{\CE}(A,\Lambda^{\bullet+1}B) \quad\text{and}\quad
\hypercohomology^\bullet_{\CE}\big(A,{\enveloping{B}}^{\otimes\bullet+1}\big) ,\]
respectively.
\end{introthm}

Finally, let us recall that the well-known Hochschild--Kostant--Rosenberg map
for ordinary smooth manifolds
admits a natural generalization as a morphism from
$\tot\big(\wa\otimes_R\Tpoly{\bullet}\big)$
to $\tot\big(\wa\otimes_R\Dpoly{\bullet}\big)$,
which is still a quasi-isomorphism of cochain complexes
and thus induces, on the cohomology level, an isomorphism
from $\hypercohomology^\bullet_{\CE}(A,\Tpoly{\bullet})$
to $\hypercohomology^\bullet_{\CE}(A,\Dpoly{\bullet})$.
However, there is a significant difference compared to the case of ordinary smooth manifolds:
the Hochschild--Kostant--Rosenberg map for Lie pairs does \emph{not} in general respect
the Gerstenhaber algebra structures on cohomology.
Nervertheless, it is always possible to remedy this defect:
the Hochschild--Kostant--Rosenberg morphism must be twisted.
Doing so involves techniques developed by Kontsevich
in the proof of his formality theorem \cite{MR2062626} --- see also \cite{MR2699544}.
Indeed, the present paper provides the foundation
for an ulterior paper \cite{MR3964152}
establishing a formality theorem for Lie pairs
and an ensuing Kontsevich--Duflo type theorem describing the precise relationship
between the Gerstenhaber algebra structures on
$\hypercohomology^\bullet_{\CE}(A, \Tpoly{\bullet})$
and $\hypercohomology^\bullet_{\CE}(A,\Dpoly{\bullet})$
revealed in Theorem~\ref{Macau}.
\subsection*{Acknowledgements}

We would like to thank Martin Bordemann, Damien Broka, Zhuo Chen,
Olivier Elchinger, Vasiliy Dolgushev, Camille Laurent-Gengoux, Hsuan-Yi Liao,
Kirill Mackenzie, Rajan Mehta,
Jim Stasheff, Luca Vitagliano, and Yannick Voglaire for fruitful discussions
and useful comments.
We are grateful to an anonymous referee for many insightful comments and suggestions
which led to sensible improvements in the presentation of our results.
Stiénon is grateful to Université Paris~7 for its hospitality
during his sabbatical leave in 2015--2016.

\section{Polydifferential operators and polyvector fields for Lie pairs}
\label{Boise}

\subsection{Chevalley--Eilenberg cohomology}

Let $A\to M$ be a Lie algebroid.
The Chevalley--Eilenberg cohomology $\hypercohomology_{\CE}^k(A,\EE^\bullet)$
in degree $k$ of a complex of left $\enveloping{A}$-modules
\[ \begin{tikzcd}
0 \arrow{r} & \EE^{-1} \arrow{r}{d} & \EE^{0} \arrow{r}{d}
& \EE^{1} \arrow{r}{d} & \EE^{2} \arrow{r}{d} & \cdots
\end{tikzcd} \]
is the total cohomology in degree $k$ of the double complex
\[ \begin{tikzcd}[row sep=small]
\vdots & \vdots & \vdots & \\
\sections{\Lambda^0 A\dual}\otimes_R\EE^{1} \arrow{u}{\id\otimes d} \arrow{r}{d_A^{\EE}} &
\sections{\Lambda^1 A\dual}\otimes_R\EE^{1} \arrow{u}{-\id\otimes d} \arrow{r}{d_A^{\EE}} &
\sections{\Lambda^2 A\dual}\otimes_R\EE^{1} \arrow{u}{\id\otimes d} \arrow{r}{d_A^{\EE}} & \cdots \\
\sections{\Lambda^0 A\dual}\otimes_R\EE^{0} \arrow{u}{\id\otimes d} \arrow{r}{d_A^{\EE}} &
\sections{\Lambda^1 A\dual}\otimes_R\EE^{0} \arrow{u}{-\id\otimes d} \arrow{r}{d_A^{\EE}} &
\sections{\Lambda^2 A\dual}\otimes_R\EE^{0} \arrow{u}{\id\otimes d} \arrow{r}{d_A^{\EE}} & \cdots \\
\sections{\Lambda^0 A\dual}\otimes_R\EE^{-1} \arrow{u}{\id\otimes d} \arrow{r}{d_A^{\EE}} &
\sections{\Lambda^1 A\dual}\otimes_R\EE^{-1} \arrow{u}{-\id\otimes d} \arrow{r}{d_A^{\EE}} &
\sections{\Lambda^2 A\dual}\otimes_R\EE^{-1} \arrow{u}{\id\otimes d} \arrow{r}{d_A^{\EE}} & \cdots
\end{tikzcd} \]

When we say that the above diagram is a double complex,
we mean in particular that each square of the grid commutes.
Hence the total cohomology is the cohomology of the complex
\[ \left(\bigoplus_{p+q=\bullet} \sections{\Lambda^p A\dual}
\otimes_R\EE^{q},d_A^{\EE}+ \id\otimes d\right) .\]
Recall that, the degree of the operator $d$ being $+1$, the usual sign convention
for the \emph{tensor product} of linear maps in the presence of gradings dictates that
\begin{equation}\label{eq:Rome}
\big(\id\otimes d\big)(\omega\otimes e)=(-1)^p\omega\otimes d(e),
\quad\forall\omega\in\sections{\Lambda^p A\dual},\ \forall e\in\EE^\bullet
.\end{equation}

\subsection{Polydifferential operators}

Given a Lie pair $(L,A)$, let $\Dpoly{-1}$ denote the algebra $R$
of smooth functions on the manifold $M$,
let $\Dpoly{0}$ denote the left $\enveloping{A}$-module
$\frac{\enveloping{L}}{\enveloping{L}\sections{A}}$,
let $\Dpoly{k}$ denote the tensor product
$\Dpoly{0}\otimes_R\cdots\otimes_R\Dpoly{0}$ of $(k+1)$ copies of
the left $R$-module $\Dpoly{0}$, and set
$\Dpoly{\bullet}=\bigoplus_{k=-1}^{\infty}\Dpoly{k}$.
Since $\Dpoly{0}$ is a left $\enveloping{A}$-module
and $\enveloping{A}$, as a Hopf algebroid, is endowed with a
comultiplication, $\Dpoly{k}$ is also naturally a left $\enveloping{A}$-module
for each $k\geqslant -1$ \cite{MR1815717}.

Furthermore, the comultiplication $\Delta:\enveloping{L}\to\enveloping{L}\otimes_R\enveloping{L}$
on the universal enveloping algebra $\enveloping{L}$
induces a comultiplication
\[ \Delta:\Dpoly{0}\to\Dpoly{0}\otimes_R\Dpoly{0} \]
since
\[ \Delta\big(\enveloping{L}\sections{A}\big)\subseteq \enveloping{L}\otimes_R
\big(\enveloping{L}\sections{A}\big) + \big(\enveloping{L}\sections{A}\big)
\otimes_R \enveloping{L} \]
--- see \cite[Sections 2.2 and 2.3]{arXiv:1408.2903}.

\begin{lemma}[\cite{arXiv:1408.2903}]
The $\enveloping{A}$-module $\Dpoly{0}$ is a cocommutative coassociative coalgebra over $R$
whose comultiplication $\Delta:\Dpoly{0}\to\Dpoly{0}\otimes_R\Dpoly{0}$ is
a morphism of $\enveloping{A}$-modules.
\end{lemma}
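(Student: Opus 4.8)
The plan is to reduce the lemma to standard facts about the universal enveloping algebra $\enveloping{L}$ regarded as a cocommutative Hopf algebroid over $R$. Write $\Delta\colon\enveloping{L}\to\enveloping{L}\otimes_R\enveloping{L}$ and $\epsilon\colon\enveloping{L}\to R$ for its comultiplication and counit; then $\Delta$ is multiplicative and every $a\in\sections{A}\subseteq\enveloping{A}\subseteq\enveloping{L}$ is primitive, $\Delta(a)=a\otimes 1+1\otimes a$ and $\epsilon(a)=0$, where $\enveloping{A}\into\enveloping{L}$ is the inclusion of Hopf algebroids induced by $A\into L$. Since $\enveloping{L}\sections{A}$ is a left ideal --- so that $\Dpoly{0}=\enveloping{L}/\enveloping{L}\sections{A}$ is a left $\enveloping{L}$-module, hence a left $\enveloping{A}$-module --- it suffices to check that $\enveloping{L}\sections{A}$ is also a coideal. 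Granting this, the coalgebra structure of $\enveloping{L}$ descends to $\Dpoly{0}$, remains cocommutative and coassociative, and makes $\Dpoly{0}$ a left $\enveloping{L}$-module-coalgebra, hence in particular a left $\enveloping{A}$-module-coalgebra; the latter is exactly the statement that the induced comultiplication and counit are morphisms of $\enveloping{A}$-modules.

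The coideal property is a short Sweedler-notation computation: writing $\Delta(u)=\sum u_{(1)}\otimes u_{(2)}$, multiplicativity of $\Delta$ gives, for $u\in\enveloping{L}$ and $a\in\sections{A}$, the identity $\Delta(ua)=\Delta(u)\Delta(a)=\sum u_{(1)}a\otimes u_{(2)}+\sum u_{(1)}\otimes u_{(2)}a$, which lies in $\enveloping{L}\sections{A}\otimes_R\enveloping{L}+\enveloping{L}\otimes_R\enveloping{L}\sections{A}$, while $\epsilon(ua)=\epsilon(u)\epsilon(a)=0$. The submodule $\enveloping{L}\sections{A}\otimes_R\enveloping{L}+\enveloping{L}\otimes_R\enveloping{L}\sections{A}$ is exactly the kernel of the canonical surjection $\enveloping{L}\otimes_R\enveloping{L}\onto\Dpoly{0}\otimes_R\Dpoly{0}$ (right exactness of $\otimes_R$, applied twice), so $\Delta$ and $\epsilon$ descend to $\Dpoly{0}$, and coassociativity, cocommutativity, and the counit axioms pass to the quotient by applying the relevant surjections. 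Finally, $\enveloping{A}$-linearity is visible from the same multiplicativity: for $a\in\enveloping{A}$ one has $\Delta(au)=\Delta(a)\Delta(u)=\sum a_{(1)}u_{(1)}\otimes a_{(2)}u_{(2)}$, which modulo $\enveloping{L}\sections{A}$ says that the induced comultiplication intertwines left multiplication by $\enveloping{A}$ on $\Dpoly{0}$ with the $\enveloping{A}$-action on $\Dpoly{0}\otimes_R\Dpoly{0}$ coming from the comultiplication of $\enveloping{A}$; similarly for $\epsilon$.

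I expect the real work to lie not in this formal argument but in the bookkeeping of $R$-module structures. One must pin down the left and right $R$-actions on $\enveloping{L}$ that make the Hopf-algebroid tensor product $\enveloping{L}\otimes_R\enveloping{L}$ and the multiplicativity of $\Delta$ meaningful, use commutativity of $R$ to regard the left $R$-module $\Dpoly{0}$ as an $R$-bimodule so that $\Dpoly{0}\otimes_R\Dpoly{0}$ is well defined, and verify that the surjection $\enveloping{L}\otimes_R\enveloping{L}\onto\Dpoly{0}\otimes_R\Dpoly{0}$ together with its kernel are as described. As a cross-check, and as an alternative and more concrete route, one may invoke the Poincaré--Birkhoff--Witt theorem for Lie algebroids to identify $\Dpoly{0}$, as a left $R$-module, with $\sections{\operatorname{Sym}(L/A)}$ endowed with its standard (deconcatenation) symmetric-coalgebra comultiplication, for which cocommutativity, coassociativity, and equivariance under the Bott-type $\enveloping{A}$-action are transparent.
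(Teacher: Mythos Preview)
The paper states this lemma without proof, treating it as a routine consequence of the Hopf algebroid structure on $\enveloping{L}$ (the reference \cite{MR1815717} in the paper is Xu's paper on quantum groupoids, which develops exactly this coalgebra machinery). Your argument is the standard one and is correct: the key point is that $\enveloping{L}\sections{A}$ is a coideal because sections of $A$ are primitive, and you verify this cleanly. Your final paragraph correctly identifies the only genuine subtlety, namely that $\enveloping{L}\otimes_R\enveloping{L}$ is not a ring and the formula $\Delta(uv)=\Delta(u)\Delta(v)$ must be interpreted via the left $\enveloping{L}$-action on the tensor product; once this is unpacked (as in \cite{MR1815717}), your Sweedler computation goes through verbatim. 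The alternative PBW route you sketch at the end is also valid and is in fact closer in spirit to how the paper later handles $\Dpoly{0}$ concretely, via the isomorphism $\pbw:\sections{S(B)}\to\Dpoly{0}$ of $R$-coalgebras.
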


Following \cite[Equation (98)]{MR1815717}, introduce
the Hochschild differential $\hochschild:\Dpoly{k-1}\to\Dpoly{k}$
defined by
\begin{multline*}
\hochschild(u_1\otimes\cdots\otimes u_k) = 1\otimes u_1\otimes\cdots\otimes
u_k + \sum_{i=1}^{k} (-1)^i u_1\otimes\cdots\otimes u_{i-1}\otimes
\Delta(u_i) \otimes
u_{i+1}\otimes\cdots\otimes u_k \\
+ (-1)^{k+1} u_1\otimes\cdots\otimes u_k\otimes 1.
\end{multline*}
Since the comultiplication
$\Delta:\Dpoly{0}\to\Dpoly{0}\otimes_R\Dpoly{0}$ is cocommutative and
coassociative, $\hochschild$ is a coboundary operator,
i.e.\ $\hochschild^2=0$.
Moreover, since the comultiplication $\Delta$ is
a morphism of $\enveloping{A}$-modules,
$\hochschild:\Dpoly{k-1}\to\Dpoly{k}$
is a morphism of $\enveloping{A}$-modules.
Therefore, the Hochschild complex
\begin{equation*}\begin{tikzcd}
0 \arrow{r} & \Dpoly{-1} \arrow{r}{\hochschild} & \Dpoly{0} \arrow{r}{\hochschild} &
\Dpoly{1} \arrow{r}{\hochschild} & \Dpoly{2} \arrow{r}{\hochschild} & \cdots
\end{tikzcd}\end{equation*}
is a complex of $\enveloping{A}$-modules.

The Chevalley--Eilenberg cohomology $\hypercohomology^k_{\CE}(A,\Dpoly{\bullet})$
in degree $k$ of the Hochschild complex of the pair $(L,A)$
is the degree $k$ total cohomology of the double complex
\[ \begin{tikzcd}[row sep=small]
\vdots & \vdots & \vdots & \\
\sections{\Lambda^0 A\dual}\otimes_R\Dpoly{1} \arrow{u}{\id\otimes\hochschild} \arrow{r}{\dau} &
\sections{\Lambda^1 A\dual}\otimes_R\Dpoly{1} \arrow{u}{-\id\otimes\hochschild} \arrow{r}{\dau} &
\sections{\Lambda^2 A\dual}\otimes_R\Dpoly{1} \arrow{u}{\id\otimes\hochschild} \arrow{r}{\dau} &
\cdots \\
\sections{\Lambda^0 A\dual}\otimes_R\Dpoly{0} \arrow{u}{\id\otimes\hochschild} \arrow{r}{\dau} &
\sections{\Lambda^1 A\dual}\otimes_R\Dpoly{0} \arrow{u}{-\id\otimes\hochschild} \arrow{r}{\dau} &
\sections{\Lambda^2 A\dual}\otimes_R\Dpoly{0} \arrow{u}{\id\otimes\hochschild} \arrow{r}{\dau} &
\cdots \\
\sections{\Lambda^0 A\dual}\otimes_R\Dpoly{-1} \arrow{u}{\id\otimes\hochschild} \arrow{r}{\dau} &
\sections{\Lambda^1 A\dual}\otimes_R\Dpoly{-1} \arrow{u}{-\id\otimes\hochschild} \arrow{r}{\dau} &
\sections{\Lambda^2 A\dual}\otimes_R\Dpoly{-1} \arrow{u}{\id\otimes\hochschild} \arrow{r}{\dau} &
\cdots
\end{tikzcd} \]

The coboundary operator
$\dau:\sections{\Lambda^p A\dual}\otimes_R
\Dpoly{q}\to\sections{\Lambda^{p+1} A\dual}\otimes_R \Dpoly{q}$ is defined by
\[ \begin{split} \dau(\omega\otimes u_0\otimes\cdots\otimes u_q)
=&\ (d_A\omega)\otimes u_0\otimes\cdots\otimes u_q \\
&\ +\sum_{j=1}^{\rk(A)}\sum_{k=0}^{q} (\alpha_j\wedge\omega)
\otimes u_0\otimes \cdots\otimes u_{k-1} \otimes
a_j\cdot u_k\otimes u_{k+1} \otimes \cdots\otimes u_q
,\end{split} \]
for all $\omega\in\sections{\Lambda^p A\dual}$ and $u_0,u_1,\dots,u_q\in\Dpoly{0}$
--- for $q=-1$, we simply have $\dau=d_A$.
Here $(a_i)_{i\in\{1,\dots,r\}}$ is any local frame of $A$
and $(\alpha_j)_{j\in\{1,\dots,r\}}$ is the dual local frame of $A\dual$.
In other words, $\hypercohomology^k_{\CE}(A,\Dpoly{\bullet})$
is the cohomology of the total complex
\[ \big(\tot(\sections{\Lambda^{\bullet}A\dual}\otimes_R\Dpoly{\bullet}),\dhoch\big) ,\]
where we use the abbreviated symbol $\mathfrak{d}_{\mathscr{H}}$
to denote the operator $\id\otimes\hochschild$.
See Equation~\eqref{eq:Rome} for the sign convention
used in the definition of the map $\id\otimes\hochschild$.

However, unlike the universal enveloping algebra of a Lie algebroid,
$\Dpoly{0}$ is in general not a Hopf algebroid over $R$
--- in fact, $\Dpoly{0}$ is not even an associative algebra.
Therefore, a priori, the Hochschild cohomology is only a vector space.

\begin{remark}
In general
$\Dpoly{0}=\frac{\enveloping{L}}{\enveloping{L}\sections{A}}$
does not admit an associative product.
For a Lie pair $(T_M,T_F)$ encoding a foliation $F$
on a smooth manifold $M$, Vitagliano proved that
$\sections{\Lambda^{\bullet}A\dual}\otimes_R\Dpoly{0}$
can be thought of as the space of normal differential operators
of the foliation $F$ and admits an $A_\infty$-algebra structure
\cite{MR3313214}.
For a generic Lie pair $(L,A)$,
the existence of an $A_\infty$-algebra structure
on $\sections{\Lambda^{\bullet}A\dual}\otimes_R\Dpoly{0}$
was proved in~\cite{VitaglianoStienonXu}.
\end{remark}
There is a natural cup product
\begin{equation}\label{eq:cup01}
\big(\sections{\Lambda^{k}A\dual}\otimes_R\Dpoly{p}\big)
\otimes\big(\sections{\Lambda^{l}A\dual}\otimes_R\Dpoly{q}\big)
\xto{\smile}\sections{\Lambda^{k+l}A\dual}\otimes_R\Dpoly{p+q+1}
\end{equation}
on $\tot\big(\sections{\Lambda^{\bullet}A\dual}\otimes_R\Dpoly{\bullet}\big)$
defined by
\[ (\omega\otimes u)\smile(\theta\otimes v)
=(-1)^{l(p+1)}(\omega\wedge\theta)\otimes(u\otimes v) \]
for all $\omega\in\sections{\Lambda^{k}A\dual}$,
$\theta\in\sections{\Lambda^{l}A\dual}$,
$u\in\Dpoly{p}$ and $v\in\Dpoly{q}$.

The following proposition is easily verified.

\begin{lemma}
For any Lie pair $(L, A)$, the cochain complex
$\big(\tot(\sections{\Lambda^{\bullet}A\dual}\otimes_R\Dpoly{\bullet}),\dhoch\big)$,
equipped with the cup product \eqref{eq:cup01},
is a dg associative algebra.
Therefore, there is an induced associative algebra structure on the Hochschild cohomology
$\hypercohomology^\bullet_{\CE}(A,\Dpoly{\bullet})$.
\end{lemma}

\begin{remark}\label{graded_commutativity}
It is natural to expect that the induced associative product
on $\hypercohomology^\bullet_{\CE}(A,\Dpoly{\bullet})$ is
graded commutative, as in the case of the usual Hochschild cohomology
$H^\bullet(\Dpoly{\bullet}(M),\hochschild)$
associated to a smooth manifold $M$.
The group $H^\bullet(\Dpoly{\bullet}(M),\hochschild)$
is the cohomology of a subcomplex of the Hochschild cochain complex
$C^\bullet(C^\infty(M),C^\infty(M))$.
Its cup product is graded commutative up to a homotopy
given by the Gerstenhaber pre-Lie bracket
\cite[Theorem~3]{MR0161898}.
This pre-Lie bracket can be defined in terms of the comultiplication and
the multiplication on $\Dpoly{0}(M)$: see Equation~\eqref{eq:pre-Lie}
for the formula in a similar situation.
However, this approach does not work for a Lie pair $(L, A)$, since
$\Dpoly{0}=\frac{\enveloping{L}}{\enveloping{L}\sections{A}}$
does not admit an associative multiplication,
thus the usual proof for graded commutativity does not extend
to our situation. In what follows, we will get around this
difficulty by establishing an isomorphism of associative algebras
between $\hypercohomology^\bullet_{\CE}(A,\Dpoly{\bullet})$
and the Hochschild cohomology of the Fedosov dg Lie algebroid
of the Lie pair $(L,A)$
--- see Proposition~\ref{Dhaka} and Proposition~\ref{thm:Naples}.
The graded commutativity of the latter can be proved the usual way
--- see Proposition~\ref{pro:hongkong1}.
\end{remark}

\subsection{Polyvector fields}

Likewise, given a Lie pair $(L,A)$, let $\Tpoly{-1}$ denote the algebra $R$
of smooth functions on the manifold $M$
and set $\Tpoly{k}=\sections{\Lambda^{k+1}(L/A)}$ for $k\geqslant 0$.
Consider $\Tpoly{\bullet}=\bigoplus_{k=-1}^\infty\Tpoly{k}$ as a complex
of $\enveloping{A}$-modules with trivial differential:
\begin{equation*} \begin{tikzcd}
0 \arrow{r} & \Tpoly{-1} \arrow{r}{0} &
\Tpoly{0} \arrow{r}{0} & \Tpoly{1} \arrow{r}{0} &
\Tpoly{2} \arrow{r}{0} & \cdots
\end{tikzcd} \end{equation*}
Its Chevalley--Eilenberg cohomology $\hypercohomology^k_{\CE}(A,\Tpoly{\bullet})$
in degree $k$
is the degree $k$ total cohomology of the double complex
\[ \begin{tikzcd}[row sep=small]
\vdots & \vdots & \vdots & \\
\sections{\Lambda^0 A\dual}\otimes_R\Tpoly{1} \arrow{u}{0} \arrow{r}{d_A^{\Bott}} &
\sections{\Lambda^1 A\dual}\otimes_R\Tpoly{1} \arrow{u}{0} \arrow{r}{d_A^{\Bott}} &
\sections{\Lambda^2 A\dual}\otimes_R\Tpoly{1} \arrow{u}{0} \arrow{r}{d_A^{\Bott}} & \cdots \\
\sections{\Lambda^0 A\dual}\otimes_R\Tpoly{0} \arrow{u}{0} \arrow{r}{d_A^{\Bott}} &
\sections{\Lambda^1 A\dual}\otimes_R\Tpoly{0} \arrow{u}{0} \arrow{r}{d_A^{\Bott}} &
\sections{\Lambda^2 A\dual}\otimes_R\Tpoly{0} \arrow{u}{0} \arrow{r}{d_A^{\Bott}} & \cdots \\
\sections{\Lambda^0 A\dual}\otimes_R\Tpoly{-1} \arrow{u}{0} \arrow{r}{d_A^{\Bott}} &
\sections{\Lambda^1 A\dual}\otimes_R\Tpoly{-1} \arrow{u}{0} \arrow{r}{d_A^{\Bott}} &
\sections{\Lambda^2 A\dual}\otimes_R\Tpoly{-1} \arrow{u}{0} \arrow{r}{d_A^{\Bott}} & \cdots
\end{tikzcd} \]

The coboundary operator
$d_A^{\Bott}:\sections{\Lambda^p A\dual}\otimes
\Tpoly{q}\to\sections{\Lambda^{p+1} A\dual}\otimes
\Tpoly{q}$ is defined by
\[ \begin{split} d_A^{\Bott}(\omega\otimes b_0\wedge\cdots\wedge b_q)
=&\ (d_A\omega)\otimes b_0\wedge\cdots\wedge b_q \\
&\ +\sum_{j=1}^{\rk(A)}\sum_{k=0}^{q} (\alpha_j\wedge\omega)
\otimes b_0\wedge \cdots\wedge b_{k-1} \wedge
\nabla^{\Bott}_{a_j} b_k\wedge b_{k+1} \wedge\cdots\wedge b_q
,\end{split} \]
for all $\omega\in\sections{\Lambda^p A\dual}$ and $b_0,b_1,\dots,b_q\in\sections{L/A}$.
Here $(a_i)_{i\in\{1,\dots,r\}}$ is any local frame of $A$
and $(\alpha_j)_{j\in\{1,\dots,r\}}$ is the dual local frame of $A\dual$.

There is a natural wedge product
\begin{equation}\label{eq:wedge01}
\big(\sections{\Lambda^{k}A\dual}\otimes_R\Tpoly{p}\big)
\otimes\big(\sections{\Lambda^{l}A\dual}\otimes_R\Tpoly{q}\big)
\xto{\wedge}\sections{\Lambda^{k+l}A\dual}\otimes_R\Tpoly{p+q+1}
\end{equation}
on $\tot\big(\sections{\Lambda^{\bullet}A\dual}\otimes_R\Tpoly{\bullet}\big)$
defined by
\begin{equation}\label{eq:wedge1}
(\omega\otimes u)\wedge(\theta\otimes v)
=(-1)^{l(p+1)}(\omega\wedge\theta)\otimes(u\otimes v)
\end{equation}
for all $\omega\in\sections{\Lambda^{k}A\dual}$,
$\theta\in\sections{\Lambda^{l}A\dual}$,
$u\in\Tpoly{p}$ and $v\in\Tpoly{q}$.

We have the following

\begin{lemma}
For any Lie pair $(L,A)$, the cochain complex
$\big(\tot(\sections{\Lambda^{\bullet}A\dual}\otimes_R\Tpoly{\bullet}),\dbott\big)$,
equipped with the wedge product \eqref{eq:wedge1}, is a dg commutative algebra.
Therefore, the cohomology
$\hypercohomology^\bullet_{\CE}(A,\Tpoly{\bullet})$
is a graded commutative algebra.
\end{lemma}

\section{Fedosov dg Lie algebroids}

\subsection{Dg Lie algebroids and polyvector fields and polydifferential operators}
\label{Najaf}

A $\ZZ$-graded manifold $\cM$ with base manifold $M$ is
a sheaf $\cA$ of $\ZZ$-graded commutative
$\mathcal{O}_M$-algebras over $M$
such that there exists a $\ZZ$-graded vector space $V$,
a covering of $M$ by open submanifolds $U\subset M$, and
a collection of isomorphisms of $C^\infty(U,\KK)$-algebras
\[ {\cA}|_U\cong C^\infty(U,\KK)\otimes_\KK\widehat{S}(V\dual) ,\]
where $\widehat{S}(V\dual)$ denotes the $\KK$-algebra
of formal power series on $V$. Here $\mathcal{O}_M$ denotes
the sheaf of $\KK$-valued $C^\infty$ functions over $M$.
By $C^\infty(\cM)$, we denote the $\ZZ$-graded commutative algebra
$\sections{M,\cA}$ of global sections of $(M,\cA)$.
By a dg manifold, we mean a $\ZZ$-graded manifold $\cM$
endowed with a homological vector field,
i.e.\ a derivation $Q$ of degree $+1$ of $C^\infty(\cM)$
satisfying $\bracket{Q}{Q}=0$.

\begin{example}\label{example:Vaintrob_dg_vs_Lie_abd}
Let $A\to M$ be a Lie algebroid over $\KK$. Then $A[1]$ is a dg manifold
with the Chevalley--Eilenberg differential $d_{\CE}$ as homological vector field.
According to Va\u{\i}ntrob~\cite{MR1480150},
there is a bijection between the Lie algebroid structures on the vector bundle $A\to M$
and the homological vector fields on the $\ZZ$-graded manifold $A[1]$.
\end{example}

\begin{example}
Let $\frakg=\bigoplus_{i\in\ZZ}\frakg_i$ be a $\ZZ$-graded
finite dimensional vector space.
Then the graded manifold $\frakg[1]$ is a dg manifold,
if and only if the graded vector space $\frakg$ admits a structure of curved
$L_\infty$ algebra.
\end{example}

Below we recall some basic notations regarding dg vector bundles.
For details, see~\cite{MR2709144, MR2534186,MR3293862, PolishSurvey}.
A dg vector bundle is a vector bundle object in the category of dg manifolds.
Consider a vector bundle object $\cE\xto{\pi}\cM$
in the category of $\ZZ$-graded manifolds. Its space of sections
$\sections{\cE}$ is defined to be the direct sum
$\bigoplus_{j\in\ZZ}\sections{\cE}^j$,
where $\sections{\cE}^j$ consists of the sections of degree $j$,
i.e.\ the maps $l\in\Hom(\cM,\cE[-j])$
such that $(\pi[-j])\circ l=\id_\cM$.
Here $\pi[-j]:\cE[-j]\to\cM$ is the natural map induced from $\pi$
--- see~\cite{MR2709144, MR2534186} for more details.

\begin{remark}\label{observation_above}
When $\cE\to\cM$ is a dg vector bundle, the homological
vector fields on $\cE$ and $\cM$ naturally induce
a degree $(+1)$ operator $\cQ$ on $\sections{\cE}$, making
$\sections{\cE}$ a dg module over $C^\infty(\cM)$.
Since the space $\sections{\cE\dual}$ of linear functions on $\cE$
and the pull-back of $C^\infty (\cM)$ via $\pi$ together generate
$C^\infty(\cE)$, the converse is also true --- see~\cite{MR3319134}.
\end{remark}

\begin{example}\label{exp:dgtangent1}
Let $(\cM,Q)$ be a dg manifold.
The space $\XX(\cM)$ of vector fields
on $\cM$ (i.e.\ graded derivations of $C^\infty(\cM)$),
which can be regarded as the space of sections $\sections{T_\cM}$,
is naturally a dg module over the dg algebra $(C^\infty(\cM),Q)$
with the Lie derivative $\lie{Q}:\XX(\cM)\to\XX(\cM)$
playing the role of the operator $\cQ$.
As a consequence, $T_\cM $ is a dg manifold
--- the homological vector field on $T_\cM$ is called the
\emph{complete lift} of $Q$ as well as tangent lift
in~\cite{MR3319134} --- and $T_\cM\to\cM$ is a dg vector bundle.
\end{example}

The following lemma is standard \cite{MR3319134}.
\begin{lemma}\label{lem:dgvector}
Assume $\cE$ is a dg vector bundle over the dg manifold $(\cM,Q)$.
\begin{enumerate}
\item Then the dual bundle $\cE\dual$ is a dg vector bundle over
$(\cM,Q)$.
\item Furthermore, for all $k\geqslant 1$,
the exterior tensor power vector bundle $\Lambda^k\cE$
is a dg vector bundle over $(\cM,Q)$.
\end{enumerate}
\end{lemma}

Here and throughout the paper,
we use the shorthand notation $\Lambda^k\cE$ abusively
to actually denote $\big(S^k(\cE[-1])\big)[k]$.

\begin{proof}
By assumption, $\sections{\cE}$ is a dg module over
$\big(C^\infty(\cM),Q\big)$ with degree $(+1)$ differential
$\cQ:\sections{\cE}\to\sections{\cE}$.
Define a degree $(+1)$ operator
$\cQ:\sections{\cE\dual}\to\sections{\cE\dual}$ by
\[ \duality{\cQ(\xi)}{l}=
Q\duality{\xi}{l}-(-1)^{\degree{\xi}}\duality{\xi}{\cQ(l)} \]
for all homogeneous $\xi\in\sections{\cE\dual}$
and $l\in\sections{\cE}$.
It is simple to see that this operator makes
$\sections{\cE\dual}$ into a dg module
over $\big(C^\infty(\cM),Q\big)$.

Similarly, $\sections{\Lambda^{k}\cE}$ is a dg module
over $\big(C^\infty(\cM),Q\big)$ with the differential
$\cQ:\sections{\Lambda^{k}\cE}\to\sections{\Lambda^{k}\cE}$
of degree $(+1)$ defined by
\begin{equation}\label{eq:wedge}
\cQ(l_1\wedge\cdots\wedge l_k)
=\sum_{i=1}^k (-1)^{\degree{l_1}+\cdots\degree{l_{i-1}}} l_1\wedge\cdots\wedge
\cQ(l_i)\wedge\cdots\wedge l_k
\end{equation}
for all homogeneous $l_1,\dots,l_k\in\sections{\cE}$.

The conclusion thus follows.
\end{proof}

A dg Lie algebroid is a Lie algebroid object in the category of dg manifolds.
Equivalently, a dg Lie algebroid is a dg vector bundle $\cL\to\cM$
endowed with a $\ZZ$-graded Lie algebroid structure
satisfying the compatibility condition
\begin{equation}\label{eq:compatibility}
\schouten{d_{\cL}}{\cQ}=0,
\end{equation}
where $d_{\cL}$ is the Chevalley--Eilenberg differential
\begin{equation}\label{eq:CE}
d_{\cL}:\sections{\Lambda^\bullet\cL\dual}
\to\sections{\Lambda^{\bullet+1}\cL\dual}
\end{equation}
of the Lie algebroid $\cL\to\cM$,
$\cQ$ is the differential (of internal degree $(+1)$)
\begin{equation}\label{eq:intQ}
\cQ:\sections{\Lambda^\bullet\cL\dual}
\to\sections{\Lambda^\bullet\cL\dual}
\end{equation}
induced by the dg vector bundle structure on $\cL\to\cM$
(see Lemma~\ref{lem:dgvector}),
and $\schouten{\argument}{\argument}$ denotes the commutator.
For more details, we refer the reader to~\cite{MR2534186,MR2709144},
where dg Lie algebroids are called $Q$-algebroids.

\begin{example}\label{exp:dgtangent2}
As in Example~\ref{exp:dgtangent1}, let $(\cM,Q)$ be a dg manifold.
In addition to being a dg vector bundle,
$T_{\cM}\to\cM$ is also a Lie algebroid.
In this case, the Chevalley--Eilenberg differential \eqref{eq:CE}
is the de Rham differential
\[ \exterior:\Omega^\bullet(\cM)\to\Omega^{\bullet+1}(\cM) ,\]
while the internal differential \eqref{eq:intQ} is the Lie derivative
\[ \lie{Q} : \Omega^\bullet (\cM) \to \Omega^\bullet (\cM) .\]
Since $\schouten{\exterior}{\lie{Q}}=0$,
it follows that $T_{\cM}$ is indeed a dg Lie algebroid.
\end{example}

For an ordinary $\ZZ$-graded Lie algebroid,
one can speak about ``polyvector fields''
and ``polydifferential operators'' on the Lie algebroid.
For a dg Lie algebroid, the dg structure will induce degree (+1) differentials
on ``polyvector fields'' and ``polydifferential operators''.
For instance, the ``polyvector fields'' and ``polydifferential
operators'' for the tangent dg Lie algebroid $T_{\cM}$
of a dg manifold $(\cM,Q)$ as in Example~\ref{exp:dgtangent2}
are, respectively, the polyvector fields and the polydifferential operators on $\cM$,
while the induced degree (+1) differentials are
$\lie{Q}$ and $\gerstenhaber{Q}{\argument}$, respectively.
Here $\gerstenhaber{\argument}{\argument}$ stands for the Gerstenhaber
bracket on the polydifferential operators of $\cM$.

More precisely, a $k$-vector field on a dg Lie algebroid $\cL\to\cM$
is a section of the vector bundle $\Lambda^k\cL\to\cM$.
Since $\cL\to\cM$ is a dg vector bundle,
according to Lemma~\ref{lem:dgvector}, we have a degree $(+1)$ differential
$\cQ: \sections{\Lambda^{k+1}\cL} \to \sections{\Lambda^{k+1}\cL}$
--- see Equation~\eqref{eq:wedge}.
On the other hand, the Lie algebroid structure on $\cL$ yields a Schouten bracket
\[ \schouten{\argument}{\argument}: \sections{\Lambda^{u+1}\cL}\otimes
\sections{\Lambda^{v+1}\cL}\to \sections{\Lambda^{u+v+1}\cL} .\]

For $n\in\ZZ$, we set
\[ \tot_\oplus^n\sections{\Lambda^{\bullet+1}\cL}
=\bigoplus_{\substack{p+q=n\\ p,q\in\ZZ\\ q\geqslant -1}}
\big(\sections{\Lambda^{q+1}\cL}\big)^p ,\]
where $\big(\sections{\Lambda^{q+1}\cL}\big)^p$
denotes the subspace of $\sections{\Lambda^{q+1}\cL}$
consisting of homogeneous elements of degree $p+q$.

\begin{proposition}\label{pro:hongkong}
Let $\cL$ be a dg Lie algebroid over $\cM$.
\begin{enumerate}
\item When endowed with the differential $\cQ$,
the wedge product, and the Schouten bracket,
the space of `polyvector fields' $\tot_\oplus\sections{\Lambda^{\bullet+1}\cL}$
is a differential Gerstenhaber algebra\footnote{Here and in the sequel,
by a differential Gerstenhaber algebra, we mean a
Gerstenhaber algebra equipped with a degree $(+1)$ differential, which is
a derivation of both the associative multiplication and the Lie bracket.
Such structures were called strongly differential Gerstenhaber
algebras in~\cite{MR1675117,MR1361447}.} --- whence a dgla.
\item When endowed with the wedge product and the Schouten bracket,
the cohomology
$\hypercohomology^\bullet\big(\tot_\oplus\sections{\Lambda^{\bullet+1}\cL},\cQ)$
is a Gerstenhaber algebra.
\end{enumerate}
\end{proposition}

Similarly, a $k$-differential operator for a Lie algebroid $\cL$
is an element of $\suspended\enveloping{\cL}^{\otimes k}$,
the tensor product (as left $C^\infty(\cM)$-modules) of $k$ copies
of the suspended universal enveloping algebra $\suspended\enveloping{\cL}$.
Recall that the universal enveloping algebra $\enveloping{\cL}$
of a $\ZZ$-graded Lie algebroid $\cL\to\cM$ with anchor
$\anchor:\cL\to T_\cM$
is the quotient of the (reduced) tensor algebra
\begin{equation}\label{eq:tensor}
\bigoplus_{n=1}^{\infty}\big(\cR\oplus\sections{\cL}\big)^{\otimes n}
\end{equation}
of the $\KK$-module $\cR\oplus\sections{\cL}$ by the two-sided ideal
generated by the elements of the following four types:
\begin{align}
& X\otimes Y-(-1)^{\degree{X}\degree{Y}}Y\otimes X-\bracket{X}{Y} && f\otimes X-fX
\nonumber \\
& X\otimes g- (-1)^{\degree{g}\degree{X}}g\otimes X-\anchor_{X}(g) && f\otimes g-fg \label{eq:four}
\end{align}
for all homogeneous $X,Y\in\sections{\cL}$ and $f,g\in\cR$.
As earlier, the symbol $\cR$ denotes $C^\infty(\cM)$.

The universal enveloping algebra $\enveloping{\cL}$
is a coalgebra over $\cR$ \cite{MR1815717}.
Its comultiplication
\[ \Delta:\enveloping{\cL}\to\enveloping{\cL}\otimes_\cR\enveloping{\cL} \]
is an $R$-linear map of degree $0$ characterized by the identities
\begin{gather*}
\Delta(1)=1\otimes 1; \\
\Delta(b)=1\otimes b+b\otimes 1, \quad\forall b\in\sections{\cL}; \\
\Delta(u\cdot v)=\Delta(u)\cdot\Delta(v), \quad\forall u,v\in\enveloping{\cL} ,
\end{gather*}
where the symbol $\cdot$ denotes the multiplication in $\enveloping{\cL}$.
We refer the reader to~\cite{MR1815717} for the precise meaning of
(the r.h.s.\ of) the last equation above.
More explicitly, we have
\begin{multline}\label{eq:325}
\Delta(b_1 b_2\cdots b_n) =1\otimes(b_1 b_2\cdots b_n)
+\sum_{p=1}^{n-1}\sum_{\sigma\in\shuffle{p}{n-p}}
\pm (b_{\sigma(1)}\cdots b_{\sigma(p)})\otimes
(b_{\sigma(p+1)}\cdots b_{\sigma(n)}) \\
+(b_1 b_2\cdots b_n)\otimes 1
,\end{multline}
where $\pm$ denotes the Koszul sign of the $(p,n-p)$-shuffle\footnote{A
$(p,q)$-shuffle is a permutation $\sigma\in S_{p+q}$ of the set
$\{1,2,\cdots,p+q\}$ satisfying $\sigma(1)<\sigma(2)<\cdots<\sigma(p)$
and $\sigma(p+1)<\sigma(p+2)<\cdots<\sigma(p+q)$.
The subset of $S_{p+q}$ consisting of all $(p,q)$-shuffles is denoted $\shuffle{p}{q}$.}
$\sigma$ of the $n$-tuple of homogeneous elements
$b_1,\dots,b_n$ of $\sections{\cL}$.

Now assume that $\cL\to\cM$ is a dg Lie algebroid.
The differential $\cQ:\sections{\cL}\to\sections{\cL}$
and the homological vector field $Q:C^\infty(\cM)\to C^\infty(\cM)$
induce a differential of degree $(+1)$ on the (reduced) tensor algebra
\eqref{eq:tensor} by way of the Leibniz rule. From the compatibility
condition \eqref{eq:compatibility}, it is simple to see
that the two-sided ideal generated by the elements
\eqref{eq:four} is stable under this induced differential
on the universal enveloping algebra
\[ \cQ:\enveloping{\cL}\to\enveloping{\cL} ,\]
which we denote by the same symbol $\cQ$ by abuse of
notation. This differential
is compatible with both the algebra and coalgebra structures
on $\enveloping{\cL}$ so that $\enveloping{\cL}$ is a dg Hopf
algebroid over the dga $\cR=C^\infty(\cM)$.
As a consequence, we obtain a differential
$\cQ:\suspended\enveloping{\cL}^{\otimes k+1}
\to\suspended\enveloping{\cL}^{\otimes k+1}$
of degree $(+1)$ for each $k\geqslant -1$.
Here $\suspended\enveloping{\cL}^{\otimes 0}=\suspended\cR$ and
$\suspended\enveloping{\cL}^{\otimes k+1}$ (with $k\geqslant 0$)
denotes the tensor product
$\suspended\enveloping{\cL}\otimes_\cR\cdots\otimes_\cR\suspended\enveloping{\cL}$
of $(k+1)$-copies of the left $\cR$-module $\suspended\enveloping{\cL}$.

A Hochschild coboundary differential
\begin{equation}\label{eq:hochschild}
\hochschild:\suspended\enveloping{\cL}^{\otimes k}
\to\suspended\enveloping{\cL}^{\otimes k+1}
\end{equation}
and Gerstenhaber bracket
\begin{equation}\label{eq:Gbraket}
\gerstenhaber{\argument}{\argument}:
\suspended\UcL{p}\otimes\suspended\UcL{q}\to\suspended\UcL{p+q}
\end{equation}
can be defined by the following explicit algebraic identities:
\begin{multline*}
\hochschild(u_1\otimes\cdots\otimes u_k)
= (\pm) 1\otimes u_1\otimes\cdots\otimes u_k
+ \sum_{i=1}^{k} (\pm) u_1\otimes\cdots\otimes\Delta(u_i)\otimes\cdots\otimes u_k \\
+ (\pm) u_1\otimes\cdots\otimes u_k\otimes 1
\end{multline*}
and
\begin{equation}\label{hazmat}
\gerstenhaber{\phi}{\psi} = \phi\star\psi - (\pm) \psi\star\phi
,\end{equation}
where $\phi\star\psi\in\suspended\UcL{p+q}$ is defined by
\begin{equation}\label{eq:pre-Lie}
\phi\star\psi = \sum_{k=0}^{p} (\pm)
u_0\otimes\cdots\otimes u_{k-1}\otimes
(\hat{\Delta}^q u_k)\cdot \psi
\otimes u_{k+1}\otimes\cdots\otimes u_p
\end{equation}
if $\phi=u_0\odot u_1\odot\cdots\odot u_p$ for some
$u_0,u_1,\dots,u_p\in\suspended\enveloping{\cL}$
and $\psi\in\suspended\enveloping{\cL}^{\otimes q+1}$.
We refer the reader to~\cite{MR1815717} for the precise meaning
of the product $(\hat{\Delta}^q u_k)\cdot\psi$
in $\suspended\UcL{q}$ appearing in the last equation above.
Here $\hat{\Delta}:\suspended\enveloping{\cL}\to
\suspended\enveloping{\cL}\otimes_{\cR}\suspended\enveloping{\cL}$
is the map induced by the comultiplication $\Delta$ on $\enveloping{\cL}$.

Finally, the tensor algebra of $\suspended\enveloping{\cL}$ over $\cR$
carries an obvious cup product
\begin{equation}\label{eq:cup}
\suspended\enveloping{\cL}^{\otimes p}\otimes
\suspended\enveloping{\cL}^{\otimes q}
\xto{\smile}\suspended\enveloping{\cL}^{\otimes p+q}
,\end{equation}
the tensor product over $\cR$ itself:
\[ \phi\smile\psi=\phi\otimes\psi .\]

For $n\in\ZZ$, we set
\[ \tot_\oplus^n\suspended\enveloping{\cL}^{\otimes\bullet+1}
=\bigoplus_{\substack{p+q=n\\ p,q\in\ZZ\\ q\geqslant -1}}
\big(\suspended\enveloping{\cL}^{\otimes q+1}\big)^p ,\]
where $\big(\suspended\enveloping{\cL}^{\otimes q+1}\big)^p$
denotes the subspace of $\suspended\enveloping{\cL}^{\otimes q+1}$
consisting of elements of degree $p+q$.

\begin{proposition}\label{pro:hongkong1}
Let $\cL$ be a dg Lie algebroid over $\cM$.
\begin{enumerate}
\item When endowed with the differential $\cQ+\hochschild$
and the Gerstenhaber bracket \eqref{eq:Gbraket},
$\tot_\oplus\suspended\enveloping{\cL}^{\otimes\bullet+1}$ is a dgla.
\item When endowed with the cup product
(i.e.\ the tensor product $\otimes_\cR$)
and the Gerstenhaber bracket, the Hochschild cohomology
$\hypercohomology^\bullet\big(\tot_\oplus\suspended\enveloping{\cL}^{\otimes \bullet+1},\cQ+\hochschild\big)$
is a Gerstenhaber algebra.
\end{enumerate}
\end{proposition}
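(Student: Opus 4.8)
The plan is to reduce Proposition~\ref{pro:hongkong1} to its classical, non-differential-graded counterpart for Hopf algebroids (see \cite{MR1815717,MR1675117}) and then to check that switching on the homological data does not disturb any of the algebraic identities involved. The only thing one needs about the extra differential is recorded above: $\enveloping{\cL}$ is a dg Hopf algebroid over the dga $\cR=C^\infty(\cM)$, i.e.\ $\mathcal{Q}\colon\enveloping{\cL}\to\enveloping{\cL}$ has degree $+1$, satisfies $\mathcal{Q}^2=0$ and $\mathcal{Q}(1)=0$, is a derivation of the multiplication~$\cdot$, and is a coderivation of the comultiplication~$\Delta$ --- equivalently, $\Delta$ is a morphism of complexes. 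First I would carry this over to the tensor powers: with the Koszul sign rule the induced operator $\mathcal{Q}$ on $\enveloping{\cL}^{\otimes k+1}$ is again a square-zero degree-$+1$ derivation of the cup product, and, because $\Delta$ is a chain map with $\mathcal{Q}(1)=0$, it graded-anticommutes with the Hochschild operator~$\hochschild$ of \eqref{hola}; together with $\hochschild^2=0$ this shows that $D:=\mathcal{Q}+\hochschild$ is a differential.

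For part~(1) it then remains to see that $D$ is a derivation of the Gerstenhaber bracket $\gerstenhaber{\argument}{\argument}$ and that the latter is graded antisymmetric and satisfies the graded Jacobi identity. The bracket is by \eqref{hazmat} the graded antisymmetrisation of the brace-type product $\star$, which is assembled from $\cdot$, the iterated coproducts $\Delta^v$ and tensor insertions; graded antisymmetry and graded Jacobi are therefore formal consequences of the (co)associativity and (co)commutativity axioms of $\enveloping{\cL}$, proved exactly as in the non-dg case once Koszul signs are inserted, and the same applies to the classical computation showing that $\hochschild$ is a derivation of $\gerstenhaber{\argument}{\argument}$. That $\mathcal{Q}$ is a derivation of $\star$ --- hence of $\gerstenhaber{\argument}{\argument}$ --- follows at once from $\mathcal{Q}$ being a derivation of $\cdot$ and from $\Delta$ being a chain map; combining with the previous step, $D$ is a derivation of $\gerstenhaber{\argument}{\argument}$, so $\big(\enveloping{\cL}^{\otimes\bullet+1},D,\gerstenhaber{\argument}{\argument}\big)$ is a dgla.

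For part~(2) the cup product is a chain map for $D$ --- it is one for $\hochschild$ by the usual argument and for $\mathcal{Q}$ because $\mathcal{Q}$ is a derivation of it --- so it descends to $\hypercohomology^\bullet\big(\enveloping{\cL}^{\otimes\bullet+1},D\big)$, where it is associative on the nose. Graded commutativity in cohomology I would obtain from the standard homotopy: in the non-dg setting $\phi\cup\psi-(-1)^{|\phi||\psi|}\psi\cup\phi$ is the Hochschild coboundary of a brace term $\phi\,\bar{\circ}\,\psi$ built, like $\star$, from $\cdot$ and $\Delta$; since $\mathcal{Q}$ is a (signed) derivation of $\bar{\circ}$, the same expression is in fact a $D$-coboundary modulo terms that vanish on $D$-cocycles, whence graded commutativity of the cup product on $\hypercohomology^\bullet$. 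The Poisson compatibility between cup product and bracket is likewise a homotopy identity in the non-dg setting and upgrades in exactly the same fashion, so the cohomology carries a Gerstenhaber algebra structure.

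The step demanding the most attention --- though it is bookkeeping rather than anything conceptual --- is the sign management. Elements of $\enveloping{\cL}$ now carry an internal degree coming from the dg structure of $\cL$, and this degree interacts with the ``tensor-length'' grading underlying $\hochschild$ and $\gerstenhaber{\argument}{\argument}$ (recall the shift by $1$ in $\enveloping{\cL}^{\otimes\bullet+1}$). One must fix once and for all the total degree on $\enveloping{\cL}^{\otimes k+1}$ together with the Koszul-signed definitions of the induced $\mathcal{Q}$, of $\hochschild$, and of $\star$, and then verify that with these conventions the classical identities --- $(\mathcal{Q}+\hochschild)^2=0$, the Jacobi identity, the homotopy formulas of the previous paragraph --- hold verbatim. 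The computations are routine but the signs are unforgiving.
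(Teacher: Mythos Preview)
The paper does not actually supply a proof of Proposition~\ref{pro:hongkong1}: it is stated as a fact, in parallel with Proposition~\ref{pro:hongkong}, and the text moves on immediately to the Fedosov construction. The implicit justification is that the result is the obvious dg analogue of the classical Gerstenhaber algebra on the Hochschild complex of a Hopf algebroid \cite{MR1815717,MR1675117}, and that passing to the dg setting introduces nothing new beyond sign bookkeeping.

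Your proposal is therefore not so much a different route as a fleshing-out of what the paper leaves unsaid, and the outline is correct: reduce to the classical identities for $\hochschild$, $\cup$, and $\gerstenhaber{\argument}{\argument}$ coming from the Hopf algebroid structure, and then check that $\mathcal{Q}$ interacts with all of these as a (co)derivation because $\enveloping{\cL}$ is a dg Hopf algebroid. Your honest flagging of the sign management as the only genuinely delicate point is accurate. One small caution: in your sketch of graded commutativity of the cup product you write that the homotopy identity ``upgrades'' because $\mathcal{Q}$ is a derivation of the brace operation $\bar{\circ}$; this is indeed true, but it is worth being explicit that the relevant homotopy formula in the non-dg case is an \emph{equality} (not just an identity up to further homotopy), so that applying $\mathcal{Q}$ to both sides and using the derivation property gives the dg version on the nose --- otherwise the phrase ``modulo terms that vanish on $D$-cocycles'' could look like hand-waving.
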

\begin{proof}
This can be proved directly by adapting the ordinary Hochschild
cohomology theory of associative algebras \cite{MR0161898}.
Since $\enveloping{\cL}$ is a dg Hopf algebroid over $\cR$,
all relevant formulae in~\cite{MR0161898} concerning the algebraic
structures on the Hochschild cochain complex of an associative
algebras hold in our context, with the differential being
$\cQ+\hochschild$, and the pre-Lie bracket and
the cup product being given, respectively, by
Equation~\eqref{eq:pre-Lie} and Equation~\eqref{eq:cup}.
We leave the details to the reader.
\end{proof}

\begin{remark}\label{rk:compatible}
Contrary to Proposition~\ref{pro:hongkong},
here $\tot_\oplus\suspended\enveloping{\cL}^{\otimes\bullet+1}$
is not a differential Gerstenhaber algebra,
for the Lie bracket and the associative multiplication
are only compatible up to homotopy. Likewise, the associative
multiplication is graded commutative \emph{on the cohomology level},
for the cup product on cochains is graded commutative
only up to homotopy.
This is reminiscent of the ordinary Hochschild cohomology theory
of associative algebras --- see~\cite{MR0161898}.
\end{remark}

\subsection{Fedosov dg manifolds}\label{Fedosov_dg_abd}

In this section, we recall the basic construction of Fedosov dg manifolds
of a Lie pair. For details, see~\cite{MR4150934}.

Let $(L,A)$ be a Lie pair.
We use the symbols $B$ to denote the quotient vector bundle $L/A$ and $r$ to denote its rank.

Consider the endomorphism $\delta$ of the vector bundle
$\Lambda^\bullet L\dual\otimes\hat{S}B\dual$ defined by
\[ \delta(\omega\otimes\chi^J)=\sum_{m=1}^r
\big(q^\top(\chi_m)\wedge\omega\big)\otimes J_m\,\chi^{J-e_m} ,\]
for all $\omega\in\Lambda L\dual$ and $J\in\NN^r$.
Here $\{\chi_k\}_{k=1}^r$ denotes an arbitrary local frame
for the vector bundle $B\dual$, the symbol $q^\top$ denotes
the vector bundle morphism $q^\top:B\dual\to L\dual$ dual
to the quotient morphism $q:L\to B$, the symbol $e_m$ denotes
the multi-index $(0,\cdots,0,1,0,\cdots,0)$
having its single nonzero entry in the $m$-th position, and
\[ \chi^J=\underset{J_1 \text{ factors}}{\underbrace{\chi_1\odot\cdots\odot\chi_1}}
\odot \underset{J_2 \text{ factors}}{\underbrace{\chi_2\odot\cdots\odot\chi_2}}
\odot \cdots \odot \underset{J_r \text{ factors}}{\underbrace{\chi_r\odot\cdots\odot\chi_r}} \]
if $J=(J_1,J_2,\cdots,J_r)$.

The operator $\delta$ is a derivation of degree $+1$ of the bundle of
graded commutative algebras ${\Lambda^\bullet L\dual
\otimes\hat{S} B\dual }$ and satisfies $\delta^2=0$.
The resulting cochain complex
\[ \begin{tikzcd}[column sep=small]
\cdots \arrow[r] & \Lambda^{n-1} L\dual \otimes\hat{S} B\dual \arrow[r, "\delta"] &
\Lambda^{n} L\dual \otimes\hat{S} B\dual \arrow[r, "\delta"] &
\Lambda^{n+1} L\dual \otimes\hat{S} B\dual \arrow[r] & \cdots
\end{tikzcd} \]
admits a contraction onto the complex with trivial differential
\[ \begin{tikzcd}
\cdots \arrow[r] & \Lambda^{n-1} A\dual \arrow[r, "0"] &
\Lambda^{n} A\dual \arrow[r, "0"] & \Lambda^{n+1} A\dual \arrow[r] & \cdots
\end{tikzcd} \]
Indeed, for every choice of splitting $i\circ p+j\circ q=\id_L$ of the short
exact sequence
\begin{equation}
\label{eq:9}
\begin{tikzcd}
0 \arrow[r] & A \arrow[r, "i"] & L \arrow[l, "p", bend left, dashed]
\arrow[r, "q"] & B \arrow[r] \arrow[l, "j", bend left, dashed] & 0
\end{tikzcd}
\end{equation}
and its dual
\[ \begin{tikzcd}
0 \arrow[r] & B\dual \arrow[r, "q^\top"] & L\dual
\arrow[l, "j^\top", bend left, dashed] \arrow[r, "i^\top"]
& A\dual \arrow[r] \arrow[l, "p^\top", bend left, dashed] & 0
\end{tikzcd} ,\]
the chain maps
\[ \sigma:\Lambda^\bullet L\dual \otimes\hat{S} B\dual \to\Lambda^\bullet A\dual \]
and
\[ \tau:\Lambda^\bullet A\dual \to\Lambda^\bullet L\dual \otimes\hat{S} B\dual \]
respectively defined by
\begin{equation}\label{Eq:sigma}
\sigma(\omega\otimes\chi^J)
=\begin{cases} i^\top(\omega) & \text{if }\length{J}=0 \\
0 & \text{otherwise,} \end{cases}
\end{equation}
for all $\omega\in\Lambda^\bullet(L\dual)$,
and \[ \tau(\alpha)=p^\top(\alpha)\otimes 1 ,\] for all $\alpha\in\Lambda^\bullet(A\dual)$,
satisfy
\[ \sigma\tau=\id \qquad \text{and} \qquad \id-\tau\sigma= h\delta+\delta h ,\]
where the homotopy operator
\[ h:\Lambda^{\bullet} L\dual \otimes\hat{S} B\dual
\to\Lambda^{\bullet-1} L\dual \otimes\hat{S} B\dual \]
is defined by
\[ h(\omega\otimes\chi^J)
=\begin{cases}\frac{1}{v+\length{J}}\sum_{k=1}^r
(\iota_{j(\partial_k)}\omega)\otimes\chi^{J+e_k} & \text{if } v\geqslant 1 \\
0 & \text{if } v=0 \end{cases} \]
for all $\omega\in p^\top(\Lambda^u A\dual)\otimes q^\top(\Lambda^v B\dual)$.
Here $\{\partial_k\}_{k=1}^r$ denotes the local frame for $B$ dual to $\{\chi_k\}_{k=1}^r$.
Notice that $h\tau=0$, $\sigma h=0$, and $h^2 =0$,
i.e.\ the triple of maps $(\tau,\sigma,h)$ make up a contraction
of $\Lambda^\bullet L\dual\otimes \hat{S}B\dual$ onto $\Lambda^\bullet A\dual$.
We remark that the operator $h$ is \emph{not} a derivation of the
algebra $\sections{\Lambda^\bullet L\dual\otimes\hat{S}B\dual}$.
However, the contraction $(\tau,\sigma,h)$ is compatible with
the graded commutative algebra structures
on $\sections{\Lambda^\bullet L\dual\otimes\hat{S}B\dual}$
and $\wa$ in the following sense:

\begin{lemma}\label{lem:semifull}
The triple $(\tau,\sigma,h)$ is a semifull algebra contraction
--- see Definition~\ref{def:semifull} ---
of $\sections{\Lambda^\bullet L\dual\otimes\hat{S}B\dual}$
onto $\wa$.
Furthermore, $\sigma$ and $\tau$ are morphisms of graded algebras.
\end{lemma}

\begin{proof}
The fact that $\tau$ and $\sigma$ are algebra morphisms follows
directly from the definitions. Moreover, the last four identities
in Definition~\ref{def:semifull} follow at once from the fact that
$\sigma$ is an algebra morphism (and the identities $\sigma h=0$
and $\sigma\tau=\operatorname{id}$).
Denoting by $\mu$ the product
on $\sections{\Lambda^{\bullet}L\dual\otimes\hat{S}B\dual}$,
the remaining identities to prove are
(recall that $\mu$ is graded commutative)
\[ h\mu(h\otimes h)=0,\qquad h\mu(h\otimes\tau)=0,\qquad h\mu(\tau\otimes\tau)=0 .\]	
To prove these, we introduce a second operator of degree $(-1)$
\[ \eta:\Lambda^{\bullet} L\dual \otimes\hat{S} B\dual
\to\Lambda^{\bullet-1} L\dual \otimes\hat{S} B\dual \]
defined by
\begin{equation}\label{derivationeta}
\eta(\omega\otimes\chi^J)
=\sum_{k=1}^r (\iota_{j(\partial_k)}\omega)\otimes\chi^{J+e_k}
\end{equation}
for all $\omega\in\Lambda^\bullet(L\dual)$.
Notice the similarity with the definition of $h$.
It is easy to check that $\eta$ is an algebra derivation
and $\ker(\eta)=\ker(h)$.
Using these two facts, we see that
\[ h^2=0 \quad\Rightarrow\quad \eta h=0 \quad\Rightarrow\quad
\eta\mu(h\otimes h)=\mu(\eta\otimes\id+\id\otimes\eta)(h\otimes h)
=0 \quad\Rightarrow\quad h\mu(h\otimes h)=0. \]
The remaining identities are proved in a similar way.
\end{proof}

The notions of $L$-connection on $B$ extending the Bott $A$-connection
and of torsion-free $L$-connection on $B$ were defined in
\cite{MR3439229,arXiv:1408.2903}.
A torsion-free $L$-connection on $B$ is necessarily an
extension of the Bott $A$-connection \cite[Lemma~5.2]{arXiv:1408.2903}.
According to~\cite[Lemma~4.5]{MR4150934}, an $L$-connection $\nabla$ on $B$
is torsion-free if and only if \[ \delta d_L^{\nabla}+d_L^{\nabla}\delta=0 .\]

\begin{theorem}[{\cite[Proposition~4.6]{MR4150934}}] \label{strawberry}
Let $(L,A)$ be a Lie pair with quotient $B=L/A$.
We interpret the sections of the bundle $L\dual\otimes\hat{S}B\dual\otimes B$
as derivations of the algebra
$\sections{\Lambda^\bullet L\dual\otimes\hat{S}B\dual}$ in the natural way.
Given a splitting of the short exact sequence \eqref{eq:9}
and a torsion-free $L$-connection $\nabla$ on $B$,
there exists a \emph{unique} derivation
\[ X^\nabla\in\sections{L\dual\otimes\hat{S}^{\geqslant 2}B\dual\otimes B}, \]
satisfying $\big(h\otimes\id_B\big)(X^\nabla)=0$ and such that the derivation
\[ Q:\sections{\Lambda^\bullet L\dual\otimes\hat{S}B\dual}
\to\sections{\Lambda^{\bullet+1}L\dual\otimes\hat{S}B\dual} \]
defined by
\begin{equation}\label{eq:Q}
Q=-\delta+d_L^\nabla+X^\nabla
\end{equation}
satisfies $Q^2=0$.
\end{theorem}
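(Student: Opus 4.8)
The plan is to construct $X^\nabla$ by an iteration on $\hat{S} B\dual$-degree, using the contraction data $(\delta,\sigma,\tau,h)$ (extended to $\etendu{\delta},\etendu{\sigma},\etendu{\tau},\etendu{h}$ on the $B$-valued complex) to solve the equation $Q^2=0$ order by order. First I would expand $Q^2$ where $Q=-\delta+d_L^\nabla+X^\nabla$. Using $\delta^2=0$ (stated above), $\delta d_L^\nabla+d_L^\nabla\delta=0$ (which holds precisely because $\nabla$ is torsion-free, also stated above), and $(d_L^\nabla)^2=R^\nabla$ (the curvature, viewed as an element of $\sections{\Lambda^2 L\dual\otimes B\dual\otimes B}$ acting by derivations), the vanishing of $Q^2$ becomes an equation of the schematic form
\[
-\delta X^\nabla + d_L^\nabla X^\nabla + \tfrac{1}{2}\schouten{X^\nabla}{X^\nabla} + R^\nabla = 0,
\]
where $\schouten{\argument}{\argument}$ denotes the (graded) commutator of derivations of $\sections{\Lambda^\bullet L\dual\otimes\hat{S}B\dual}$. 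Equivalently, $\delta X^\nabla = d_L^\nabla X^\nabla + \tfrac12\schouten{X^\nabla}{X^\nabla} + R^\nabla$, an equation I would solve by setting $X^\nabla = \etendu{h}\big(d_L^\nabla X^\nabla + \tfrac12\schouten{X^\nabla}{X^\nabla} + R^\nabla\big)$.

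The key step is then a fixed-point / recursion argument. Write $X^\nabla=\sum_{k\geqslant 2}X_k$ with $X_k\in\sections{L\dual\otimes\hat{S}^k B\dual\otimes B}$, the grading by symmetric-power degree in $B\dual$. The operator $\delta$ lowers this degree by $1$, $d_L^\nabla$ preserves it, and the commutator bracket of two terms of degrees $k,\ell$ lands in degree $k+\ell-1\geqslant k+1$ when $k,\ell\geqslant 2$; also $R^\nabla$, because $\nabla$ extends the Bott connection, has only components of $\hat{S}B\dual$-degree $\geqslant 1$ — in fact, after subtracting the part recoverable from lower data, the relevant inhomogeneous term starts in degree $2$. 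Hence projecting the equation onto $\hat{S}^{k+1}B\dual$ expresses $\delta X_{k+1}$ (which determines $X_{k+1}$ up to $\ker\delta$, pinned down by the normalization $\etendu{h}(X^\nabla)=0$ together with $\etendu{h}^2=0$, $\etendu{h}\etendu{\tau}=0$) entirely in terms of $X_2,\dots,X_k$ and the fixed data $R^\nabla$, $d_L^\nabla$. This gives a well-defined recursion producing a unique $X^\nabla$ with $\etendu{h}(X^\nabla)=0$; I would check $X_2$ is the "leading" term built directly from $R^\nabla$ via $\etendu{h}$. The homotopy identity $\id-\etendu{\tau}\etendu{\sigma}=\etendu{h}\etendu{\delta}+\etendu{\delta}\etendu{h}$ is what guarantees that "apply $\etendu{h}$" inverts "$\delta$" on the relevant ($\etendu{\sigma}$-coclosed, positive-degree) subspace, so that the candidate $X^\nabla$ really does solve the full equation and not merely its $\delta$-image.

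Finally I would verify the consistency of the recursion — i.e.\ that at each stage the right-hand side actually lies in the image of $\delta$ (equivalently, is killed by $\etendu{\sigma}$ and is $\delta$-closed), so that $\etendu{h}$ applied to it is a genuine preimage. This is the standard "Bianchi identity" bookkeeping: one differentiates the equation $Q^2=0$ truncated at order $k$ and uses $\delta^2=0$, $\delta d_L^\nabla + d_L^\nabla\delta = 0$, the graded Jacobi identity for the commutator bracket, and the Bianchi identity $d_L^\nabla R^\nabla=0$. I expect this consistency check to be the main obstacle — not conceptually, but because it requires carefully tracking signs coming from the $\Lambda^\bullet L\dual$-degree and from the derivation-commutator conventions, exactly the sign convention recorded in \eqref{eq:Rome}. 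Once consistency is in hand, uniqueness is immediate: any two solutions with $\etendu{h}(X^\nabla)=0$ satisfy the same recursion and hence coincide degree by degree. I would cite \cite{arXiv:1605.09656} for the detailed computation, since the argument is a direct adaptation of Fedosov's original iteration and of Dolgushev's treatment in \cite{MR2102846}.
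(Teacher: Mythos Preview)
The paper does not include a proof of this theorem; it is quoted from \cite{arXiv:1605.09656}, where the full argument is given. Your sketch is correct and matches the Fedosov-type iteration carried out there: one defines $X^\nabla$ as the unique solution of the fixed-point equation $X^\nabla = \etendu{h}\big(R^\nabla + d_L^\nabla X^\nabla + \tfrac12\schouten{X^\nabla}{X^\nabla}\big)$, checks convergence via the $\hat{S}B\dual$-filtration, and then verifies $Q^2=0$ by a Bianchi-type argument applied to the defect $A:=\delta X^\nabla - R^\nabla - d_L^\nabla X^\nabla - \tfrac12\schouten{X^\nabla}{X^\nabla}$.

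One small imprecision worth flagging: the curvature $R^\nabla$ sits in $\sections{\Lambda^2 L\dual\otimes\hat{S}^1 B\dual\otimes B}$, i.e.\ in $\hat{S}B\dual$-degree exactly~$1$, not $\geqslant 2$. The hypothesis that $\nabla$ extends the Bott connection is not what controls this degree; it instead forces the $\Lambda^2 A\dual$-component of $R^\nabla$ to vanish, a fact that matters later (e.g.\ in identifying the induced differential on $\sections{\Lambda^\bullet A\dual}\otimes_R\Tpoly{\bullet}$ with $d_A^{\Bott}$) rather than for the present recursion. The recursion itself starts with $\delta X_2$ matching the degree-$1$ term $R^\nabla$, and proceeds exactly as you describe.
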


As a consequence, $(\cM=L[1]\oplus B,Q=-\delta+d_L^\nabla+X^\nabla)$ is a dg manifold,
which we call a \emph{Fedosov dg manifold} associated with the Lie pair $(L,A)$.
The Fedosov dg manifold $(\cM,Q)$ of Theorem~\ref{strawberry} was also obtained independently by
Batakidis--Voglaire~\cite{MR3724780} in the case of matched pairs.

\begin{remark}
The Kapranov dg manifold $A[1]\oplus L/A$ introduced in~\cite{arXiv:1408.2903}
is a dg submanifold of the Fedosov dg manifold $(L[1]\oplus L/A, Q)$.
\end{remark}

In order to study the dependence of the above construction on the involved choices,
it is useful to review a different description of the Fedosov dg manifold,
which can also be found in~\cite{MR4150934}.
As shown in~\cite{MR2989383,arXiv:1408.2903} (see also \cite[\S 3.4]{MR4150934}),
the choice of a splitting $j:B\to L$ of the short exact sequence
$0\to A\to L\to B\to 0$ and of an $L$-connection $\nabla$ on $B$
determines a Poincar\'e--Birkhoff--Witt isomorphism
of filtered $C^\infty(M)$-coalgebras (PBW map in short)
\begin{equation}
\label{eq:pbw}
\operatorname{pbw}:\sections{SB}\to\frac{\enveloping{L}}{\enveloping{L}\sections{A}}.\end{equation}
Since $\enveloping{L}\sections{A}$ is a left ideal of the algebra $\enveloping{L}$,
there is a natural left $\sections{L}$-action on the quotient
$\frac{\enveloping{L}}{\enveloping{L}\sections{A}}$,
and an induced flat $L$-connection $\cn$ on $SB$:
\begin{equation}
\label{eq:cn}
\cn_l(s) = \operatorname{pbw}\inv(l\cdot\operatorname{pbw}(s))
\end{equation}
for all $l\in\sections{L}$ and $s\in\sections{SB}$. Moreover, for every $l\in\sections{L}$,
the covariant derivative $\cn_l$ is a coderivation of the $C^\infty(M)$-coalgebra $\sections{SB}$.

Dualizing, we obtain an $L$-connection on $\hat{S}(B\dual)$, which we continue to denote by $\cn$.
Furthermore, for every $l\in\sections{L}$ the covariant derivative
$\cn_l$ is a derivation of the $C^\infty(M)$-algebra
$\sections{\hat{S}(B\dual)}$.
Finally, this latter fact implies that the induced Chevalley--Eilenberg differential
\begin{equation}\label{eq:CEL}
d_L^{\cn}:\sections{\Lambda^\bullet L\dual\otimes\hat{S}(B\dual)}
\to\sections{\Lambda^{\bullet+1}L\dual\otimes\hat{S}(B\dual)}
\end{equation}
is a derivation of the algebra
$\sections{\Lambda^\bullet L\dual\otimes\hat{S}(B\dual)}$,
and can thus be regarded as a homological vector field
on the graded manifold $L[1]\oplus B$.
One of the main results of~\cite{MR4150934} is the following theorem.

\begin{theorem}[{\cite[Theorem~4.7]{MR4150934}}]\label{thm:EW}
Given a Lie pair $(L,A)$, together with a splitting
of the short exact sequence $0\to A\to L\to B\to 0$
and a torsion-free $L$-connection on $B$,
the dg manifold $(L[1]\oplus B,d_L^{\cn})$
constructed above and the dg manifold $(L[1]\oplus B,Q)$
constructed via the Fedosov iteration in Theorem~\ref{strawberry}
coincide, i.e.\ $d_L^{\cn}=Q$.
\end{theorem}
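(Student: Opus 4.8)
The plan is to show that the two homological vector fields $d_L^{\cn}$ and $Q = -\delta + d_L^{\nabla} + X^{\nabla}$ on the graded manifold $L[1]\oplus B$ agree, given the same splitting $j:B\to L$ and the same torsion-free $L$-connection $\nabla$ on $B$ extending the Bott $A$-connection. Since both are derivations of degree $+1$ of the algebra $\sections{\Lambda^\bullet L^\vee\otimes\hat S B^\vee}$, and since this algebra is generated over $C^\infty(M)$ by $\sections{L^\vee}$ (in degree $1$, on which both operators act as the Chevalley--Eilenberg differential $d_L$ of the \emph{Lie algebroid} $L$ acting on the zero-section) and by $\sections{B^\vee}\subset\sections{\hat S B^\vee}$ (in degree $0$), it suffices to check that $d_L^{\cn}$ and $Q$ agree on $\sections{B^\vee}$. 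Equivalently, viewing both as built from an $L$-connection on $\hat S B^\vee$ plus lower-order (along the fibres of $B$) corrections, I would compare the two induced connections term by term in the $\hat S B^\vee$-filtration.

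The key steps, in order. First I would recall/record the characterization of $Q$ from Theorem~\ref{strawberry}: $Q=-\delta+d_L^\nabla+X^\nabla$ is the \emph{unique} derivation of that form with $X^\nabla\in\sections{L^\vee\otimes\hat S^{\geqslant 2}B^\vee\otimes B}$, $\etendu h(X^\nabla)=0$, and $Q^2=0$. So it is enough to verify that $d_L^{\cn}$ can be written as $-\delta + d_L^\nabla + X$ for some $X$ satisfying exactly these three properties — then uniqueness forces $X=X^\nabla$ and hence $d_L^{\cn}=Q$. Second, I would unwind $d_L^{\cn}$: the flat $L$-connection $\cn$ on $SB$ (dually on $\hat S B^\vee$) is defined by $\cn_l(s)=\mathrm{pbw}^{-1}(l\cdot\mathrm{pbw}(s))$, and I would use the standard expansion of the pbw isomorphism (as in \cite{MR2989383,arXiv:1408.2903}, also \cite[\S1.4]{arXiv:1605.09656}) which expresses $\mathrm{pbw}$ as identity plus terms involving $\nabla$ and the brackets; this shows $\cn_l = \nabla_l + (\text{Koszul/tautological part}) + (\text{higher order in }\hat S B^\vee)$, where the tautological part, upon passing to the Chevalley--Eilenberg differential, produces precisely the $-\delta$ term (it is the contraction $\iota$-type operator dual to the symmetrization $b\mapsto$ degree-raising in $SB$), and the remaining higher-order piece is some $X\in\sections{L^\vee\otimes\hat S^{\geqslant 2}B^\vee\otimes B}$. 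Third, I would verify the three defining properties for this $X$: flatness of $\cn$ gives $(d_L^{\cn})^2=0$, hence $Q^2=0$ with $Q:=d_L^{\cn}$; the filtration/degree bookkeeping of the pbw expansion gives $X\in\hat S^{\geqslant 2}$; and the compatibility of pbw with the coalgebra structure (the homotopy $h$ is built from the same symmetrization data) gives $\etendu h(X)=0$. Finally, invoke the uniqueness clause of Theorem~\ref{strawberry} to conclude.

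Alternatively — and this is likely the route the authors take, since it is cleaner — one can bypass re-deriving the pbw expansion by arguing directly from uniqueness on the generators: show that $d_L^{\cn}$ restricted to $\sections{L^\vee}$ (degree-$1$ generators) equals $d_L$ on the nose (because $\cn$ extends the $L$-action on the base, which is just the Lie algebroid $L$ acting on functions), that $d_L^{\cn}+\delta$ raises the $\hat S B^\vee$-degree by at least $2$ when applied to $\sections{B^\vee}$ — i.e. $d_L^{\cn}=-\delta+(\text{connection term})+(\text{order}\geqslant 2)$ with the connection term being $d_L^\nabla$ because $\cn$ and $\nabla$ induce the same thing modulo $\hat S^{\geqslant 2}$ by construction of pbw — and that the homotopy-gauge condition $\etendu h(\cdot)=0$ holds. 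Then $d_L^{\cn}$ satisfies all the hypotheses pinning down $Q$, so they coincide.

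The main obstacle is the bookkeeping in the second step: making precise the claim that the ``tautological'' first-order part of $\cn$ is exactly $-\delta$ and that the correction $X$ lands in $\hat S^{\geqslant 2}B^\vee$ and is $\etendu h$-closed. This requires carefully matching the combinatorics of the pbw isomorphism (how symmetrization interacts with left multiplication in $\enveloping L$) against the explicit definitions of $\delta$ and $h$ given above — in particular tracking the factor $\tfrac1{v+|J|}$ in $h$ and the multi-index shifts. Everything else (the derivation property, acting on generators, the reduction to uniqueness) is formal; the identification of the order-$1$ term with $-\delta+d_L^\nabla$ is where the real content of the two constructions must be reconciled.
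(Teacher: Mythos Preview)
The paper does not prove this theorem; it is stated with the citation \cite{arXiv:1605.09656} and no argument is given here. So there is no ``paper's own proof'' to compare against --- the result is imported wholesale from the reference.

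That said, your strategy of invoking the uniqueness clause of Theorem~\ref{strawberry} is the natural one, and the first two of the three properties you list are unproblematic: flatness of $\cn$ gives $(d_L^{\cn})^2=0$ immediately, and the filtration analysis showing $d_L^{\cn}=-\delta+d_L^{\nabla}+X$ with $X\in\sections{L^\vee\otimes\hat S^{\geqslant 2}B^\vee\otimes B}$ follows from the recursive definition of $\pbw$ (the leading term of $\cn_l$ on $S^nB$ is $q(l)\odot(-)$, the next is $\nabla_l$, and the rest drops degree). The genuine gap is the third condition, $\etendu{h}(X)=0$. Your justification --- that ``the homotopy $h$ is built from the same symmetrization data'' as $\pbw$ --- is not an argument. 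The operator $h$ is a specific contraction depending on the splitting $j$, and $\etendu{h}(X)=0$ amounts to a symmetry condition on the $B^\vee$-component of $X$ (roughly, that the coefficients $X^{J,k}_m$ of $q^\top(\chi_m)\otimes\chi^J\otimes\partial_k$ are symmetric under exchanging $m$ with the entries of $J$). Whether the correction term coming from $\pbw$ satisfies this is exactly what needs to be checked, and it does not follow from coassociativity of $\pbw$ alone; it uses the explicit recursive formula for $\pbw$ in terms of $\nabla$ and the torsion-freeness hypothesis. You correctly flag this as the main obstacle, but you do not overcome it --- and without it, uniqueness cannot be invoked, since the gauge condition $\etendu{h}(X^\nabla)=0$ is precisely what singles out $X^\nabla$ among the many $X$'s giving a homological $Q$.
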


The Fedosov dg manifolds obtained from different choices
of a splitting and a connection are isomorphic to one another.
The isomorphism can be made explicit via the associated PBW maps.
This shall be needed in Section~\ref{sec:uniqueness},
where we will establish the uniqueness claim (2b)
of Theorem~\ref{Macau} from the introduction.

We consider two different choices $j_1,\nabla_1$ and $j_2,\nabla_2$
of a splitting $B\to L$ and a torsion-free $L$-connection on $B$
as before; the two induced homological vector fields $Q_1$ and $Q_2$
on $L[1]\oplus B$; and the two induced Poincar\'e--Birkhoff--Witt
isomorphisms $\pbw_1$ and $\pbw_2$.
The composition of the latter
\[ \psi:=\pbw_1\inv\circ \pbw_2:\sections{SB}\to\sections{SB} \]
is an automorphism of the $C^\infty(M)$-coalgebra $\sections{SB}$
intertwining the two induced $L$-module structures.
Likewise, the dual map
$\psi\dual:\sections{\hat{S}(B\dual)}\to\sections{\hat{S}(B\dual)}$
is an automorphism of the $C^\infty(M)$-algebra
$\sections{\hat{S}(B\dual)}$
intertwining the two induced $L$-module structures.
Finally, it follows immediately that
\[ \id\otimes\psi\dual :
\big(\sections{\Lambda^\bullet L\dual\otimes\hat{S}(B\dual)},Q_1\big)
\to \big(\sections{\Lambda^\bullet L\dual\otimes\hat{S}(B\dual)},Q_2\big) \]
is an isomorphism of dg manifolds $(L[1]\oplus B,Q_2)\to(L[1]\oplus B,Q_1)$.

\subsection{Fedosov dg Lie algebroids}

Let $(L,A)$ be a Lie pair over a base manifold $M$.
Given a splitting $j:B\to L$ of the short exact sequence of vector bundles
$0\to A \to L \to B \to 0$ and a torsion-free $L$-connection $\nabla$
on $B$,
one constructs a Fedosov dg manifold $(\cM, Q)$, where $\cM=L[1]\oplus B$,
as in Theorem~\ref{strawberry}.

Let $R=C^\infty(M)$.
Let $\cF\to\cM$ denote the pullback of the vector bundle $B\to M$
through the surjective submersion $\cM\to M$.
It is a graded vector bundle whose total space $\cF$ is the graded manifold with base $M$
associated with the graded vector bundle $L[1]\oplus B\oplus B\to M$.
Its space of sections $\sections{\cF\to\cM}$ is canonically identified
with $C^\infty(\cM)\otimes_{R}\sections{B}\cong
\sections{\Lambda^\bullet L\dual\otimes\hat{S}(B\dual)\otimes B}$.
It is naturally a vector subbundle of $T_{\cM}\to\cM$;
the inclusion $\sections{\cF\to\cM}\into\XX(\cM)$ takes the section
$(\lambda\otimes\chi^J)\otimes\partial_k\in C^\infty(\cM)\otimes_{R}\sections{B}$
of the vector bundle $\cF\to\cM$ to the derivation
$\mu\otimes\chi^M\mapsto\lambda\wedge\mu\otimes M_k\chi^{M+J-e_k}$ of $C^{\infty}(\cM)$.

Alternatively, denote by $T_{\ver}B\to B$ the formal vertical tangent bundle of the vector
bundle $B\to M$, which consists of all formal vertical tangent vectors of $B$.
Its space of sections $\verticalX(B):=\sections{B;T_{\ver}B}$
is naturally isomorphic to $\sections{\hat{S}(B\dual)\otimes B}$.
Indeed $T_{\ver}B$ is a double vector bundle
\cite{arXiv:math/9808081}, which is isomorphic to $B\oplus B$.
Consider the projection $\pr:\cM=L[1]\oplus B\to B$.
Then $\cF$ is isomorphic to the pull back bundle $\pr^* T_{\ver}B$.

\begin{proposition}\label{pro:Rome}
The subbundle $\cF\subset T_\cM$ is a dg integrable distribution
(or a dg foliation) of the dg manifold $(\cM,Q)$,
i.e.\ $\cF$ is a dg Lie subalgebroid of the tangent dg Lie algebroid
$T_{\cM}\to\cM$.
\end{proposition}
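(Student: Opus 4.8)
The plan is to verify the two defining properties of a dg Lie subalgebroid of $T_\cM\to\cM$: first, that $\sections{\cF}$ is closed under the Lie bracket of vector fields on $\cM$ (i.e.\ $\cF$ is an involutive subbundle), and second, that $\sections{\cF}$ is preserved by the homological operator $\liederivative{Q}=[Q,\argument]$ acting on $\XX(\cM)$ (i.e.\ $\cF$ is a \emph{dg} subbundle). Once these are established, $\cF$ inherits an anchor (the restriction of the anchor $\XX(\cM)\to\XX(\cM)$, which is the identity) and a bracket, and the compatibility axioms of a dg Lie algebroid hold automatically because they hold in $T_\cM$.

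First I would check involutivity. Using the explicit embedding $\sections{\cF}\into\XX(\cM)$ described just before the statement --- a section $(\lambda\otimes\chi^J)\otimes\partial_k$ goes to the derivation $\mu\otimes\chi^M\mapsto\lambda\wedge\mu\otimes M_k\,\chi^{J+M-e_k}$ --- one sees that $\sections{\cF}$ is precisely the $C^\infty(\cM)$-submodule of $\XX(\cM)$ spanned by the ``vertical'' derivations along the fibers of $\pr:\cM\to B$, equivalently the image of $\pr^*T_{\ver}B$. The bracket of two such fiberwise derivations is again fiberwise (the vertical vector fields on any vector bundle form an involutive distribution, and pulling back along $\pr$ preserves this), so involutivity is immediate and requires no real computation; I would just note that $[\sections{\cF},\sections{\cF}]\subset\sections{\cF}$ because both brackets, when expressed in the frame $\{\partial_k\}$, produce only terms of the same vertical type.

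The substantive step is showing $\liederivative{Q}\sections{\cF}\subset\sections{\cF}$, where $Q=-\delta+d_L^\nabla+X^\nabla$. I would treat the three summands separately. For $\delta$: since $\delta$ is itself (via the isomorphism $\cF\cong\pr^*T_{\ver}B$ and $\etendu{\delta}$) essentially a vertical object twisted into the $\Lambda^\bullet L\dual$ direction, the bracket $[\delta,\argument]$ of a vertical derivation lands back in the span of vertical derivations; alternatively one computes $[\delta, (\lambda\otimes\chi^J)\otimes\partial_k]$ directly on a generator $\mu\otimes\chi^M$ and observes the result is again of the form $\sum(\text{function})\otimes\partial_l$. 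For $d_L^\nabla$: the connection $\nabla$ on $B$ is an $L$-connection, and for each $l\in\sections L$ the covariant derivative acts on $\sections{\hat S B\dual\otimes B}$ preserving the ``sections of a bundle'' structure; hence $[d_L^\nabla,\argument]$ applied to a fiberwise derivation gives a fiberwise derivation (covariant differentiation does not create horizontal components). For $X^\nabla\in\sections{L\dual\otimes\hat S^{\geqslant 2}B\dual\otimes B}$: by construction $X^\nabla$ is \emph{itself} a section of $\cF$-valued forms (it lies in $\sections{\Lambda^1 L\dual}\otimes\sections{\cF}$ fiberwise), so $[X^\nabla,\argument]$ of a vertical derivation is again vertical by the involutivity already proven (extended to $\Lambda^\bullet L\dual$-valued derivations). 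Assembling the three, $[Q,\sections\cF]\subset\sections\cF$.

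The main obstacle I anticipate is purely bookkeeping: keeping track of the signs and of the fact that $\liederivative Q$ raises $\Lambda^\bullet L\dual$-degree by one, so that ``$\sections\cF$'' must really be understood as the graded $C^\infty(\cM)$-submodule $C^\infty(\cM)\otimes_{C^\infty(M)}\sections B$ sitting inside $\XX(\cM)$ with its full internal grading, and verifying that each of $[\delta,\argument]$, $[d_L^\nabla,\argument]$, $[X^\nabla,\argument]$ respects this $C^\infty(\cM)$-module structure compatibly with the Leibniz rule for $Q$ as a derivation. No genuinely hard estimate or clever idea is needed; the only care required is to confirm that the three pieces of $Q$ contribute no term with a $\partial/\partial(\text{base})$ or $\Lambda^\bullet L\dual$-internal component that would escape $\cF$ --- and this follows because $\delta$, $d_L^\nabla$, and $X^\nabla$ are, respectively, a purely vertical-fiber derivation dressed with $q^\top(\chi_m)\wedge$, a covariant-derivative-type operator, and a fiberwise vector field dressed with a $1$-form, none of which differentiates in directions transverse to the fibers of $\pr$.
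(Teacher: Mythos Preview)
Your proposal is correct and follows essentially the same approach as the paper. The paper's proof is terser: it dismisses involutivity as ``simple to see,'' then reduces the dg subalgebroid condition to showing that $\sections{\cM;\cF}\cong\sections{\Lambda^\bullet L^\vee\otimes\mathscr{X}_{\ver}(B)}$ is a dg module over $C^\infty(\cM)$, asserting $\liederivative{Q}$-stability directly from the decomposition $Q=-\delta+d_L^\nabla+X^\nabla$ and writing out the Leibniz rule explicitly. Your piece-by-piece analysis of the three summands of $Q$ is exactly the content hidden behind the paper's phrase ``From Equation~(eq:Q), it follows\ldots,'' and your closing paragraph about the Leibniz-rule bookkeeping matches the one displayed identity the paper does verify.
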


\begin{proof}
It is simple to see that $\cF\to\cM$ is a Lie subalgebroid of $T_\cM\to\cM$.
Hence it suffices to show that $\cF$ admits a dg manifold structure
such that $\cF\to\cM$ is a dg subbundle of $T_\cM\to\cM$ ---
the compatibility condition \eqref{eq:compatibility}
holds automatically since $T_\cM\to\cM$ is a dg Lie algebroid
(see Example~\ref{exp:dgtangent2}).
According to Remark~\ref{observation_above}, it suffices to prove that
$\sections{\cM;\cF}$ is a dg module over $\big(C^\infty(\cM),Q\big)$.
It is clear that
$\sections{\cM;\cF}\isomorphism\sections{\Lambda^\bullet L\dual}
\otimes_R\verticalX(B)$.
From Equation~\eqref{eq:Q}, it follows that
$\sections{\Lambda^\bullet L\dual}\otimes_R\verticalX(B)$
is stable under the Lie derivative $\lie{Q}$.
Moreover, we have
\[ \lie{Q}(\xi\cdot(\eta\otimes X))=Q(\xi)\cdot(\eta\otimes X)
+(-1)^{\degree{\xi}}\xi\cdot\lie{Q}(\eta\otimes X) ,\]
for all homogeneous $\xi\in\sections{\Lambda^\bullet L\dual\otimes\hat{S}B\dual}$,
$\eta\in\sections{\Lambda^\bullet L\dual}$, and $X\in\verticalX(B)$.
Therefore, $\sections{\cM;\cF}$ is a dg module over $\big(C^\infty(\cM),Q\big)$.
\end{proof}

Any dg Lie algebroid constructed in this manner
is called a \emph{Fedosov dg Lie algebroid}
associated with the Lie pair $(L,A)$.

Next, we will identify the space of polyvector fields
on the Fedosov dg Lie algebroid $\cF$ over $\cM$.

Set
\begin{equation}\label{eq:Xpoly}
\Tpoly{k}=\sections{\Lambda^{k+1}B}
\end{equation}
and let $\verticalTpoly{k}$ denote
$\sections{B;\Lambda^{k+1}T_{\ver}B}$,
the space of formal vertical $(k+1)$-vector fields on $B$.
It is clear that
\begin{equation}\label{eq:Xpoly1}
\verticalTpoly{k}\isomorphism\sections{\hat{S}(B\dual)}\otimes_R \Tpoly{k}
\end{equation}
and
\[ \sections{\cM;\Lambda^{k+1}\cF}\isomorphism
\sections{\Lambda^{\bullet}L\dual}\otimes_R\verticalTpoly{k}\isomorphism
\sections{\Lambda^{\bullet}L\dual\otimes\hat{S}B\dual}\otimes_R\Tpoly{k} .\]

Since $\cF$ is a dg Lie subalgebroid of $T_\cM$,
the subspace $\sections{\cM;\Lambda^{k+1}\cF}\isomorphism
\sections{\Lambda^{\bullet}L\dual\otimes\hat{S}B\dual}\otimes_R\Tpoly{k}$
of the space $T_{\poly}^{k}(\cM)$ of $(k+1)$-vector fields
on $\cM=L[1]\oplus B$ is stable under $\lie{Q}$,
we obtain a cochain complex
\[ \begin{tikzcd}
\cdots \arrow[r] &
\sections{\Lambda^{u}L\dual \otimes \hat{S}B\dual}\otimes_R\Tpoly{k}
\arrow[r, "\lie{Q}"] &
\sections{\Lambda^{u+1}L\dual \otimes \hat{S}B\dual}\otimes_R\Tpoly{k}
\arrow[r] & \cdots
\end{tikzcd} \]
for each $k\geqslant -1$.

Applying Proposition~\ref{pro:hongkong} to the
Fedosov dg Lie algebroid $\cF\to\cM$, we obtain the following

\begin{proposition}\label{pro:Lyon}
The total complex
$\Big(\tot\big(\sections{\Lambda^\bullet L\dual\otimes\hat{S}B\dual}
\otimes_R\Tpoly{\bullet}\big),\lie{Q}\Big)$
admits a differential Gerstenhaber algebra,
whence a dgla structure.
\end{proposition}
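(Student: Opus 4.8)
The plan is to apply Proposition~\ref{pro:hongkong} directly to the Fedosov dg Lie algebroid $\cF\to\cM$ furnished by Proposition~\ref{pro:Rome}, and then to check that the abstract space of polyvector fields $\sections{\Lambda^{\bullet+1}\cF}$ appearing there coincides with the concrete total complex in the statement. The first step is the identification of modules: we have already observed that $\sections{\cM;\cF}\cong\sections{\Lambda^\bullet L\dual\otimes\verticalX(B)}$ and, more generally, that
\[
\sections{\cM;\Lambda^{k+1}\cF}\cong\sections{\Lambda^\bullet L\dual\otimes\hat{S}B\dual}\otimes_R\Xpoly{k}
\]
via $\cF\cong\pr^*T_{\ver}B$ and $\verticalXpoly{k}\cong\sections{\hat{S}B\dual}\otimes_R\Xpoly{k}$. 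Summing over $k\geqslant-1$ gives the underlying graded vector space of the total complex. So the first thing I would do is record this isomorphism of $C^\infty(\cM)$-modules carefully, keeping track of the internal grading (from $\Lambda^\bullet L\dual$) versus the ``polyvector'' grading $k$, so that the total degree matches the one implicit in $\tot$.

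Next I would identify the structure maps. The homological vector field $Q=-\delta+d_L^\nabla+X^\nabla$ on $\cM$ induces, by the general dg-vector-bundle mechanism recalled before Proposition~\ref{pro:hongkong}, a degree $+1$ operator $\mathcal{Q}$ on $\sections{\cM;\cF}$ and hence on $\sections{\cM;\Lambda^{\bullet+1}\cF}$; by construction this operator is the restriction of the Lie derivative $\liederivative{Q}$ acting on polyvector fields on $\cM$, and Proposition~\ref{pro:Rome} is precisely the statement that this restriction is well-defined, i.e.\ that $\liederivative{Q}$ preserves the subspace $\sections{\Lambda^\bullet L\dual\otimes\hat{S}B\dual}\otimes_R\Xpoly{\bullet}\subset\Tpoly{\bullet}(\cM)$. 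The wedge product on $\sections{\Lambda^{\bullet+1}\cF}$ is the exterior product of sections of the exterior powers of $\cF$, which under the identification above is visibly the $R$-bilinear extension of the wedge products in $\Lambda^\bullet L\dual\otimes\hat{S}B\dual$ and in $\Xpoly{\bullet}=\Lambda^{\bullet+1}B$; similarly the Schouten bracket of $\cF$ is the restriction of the Schouten bracket on $\XX(\cM)$, which is well-defined on the subspace because $\cF$ is a Lie subalgebroid. Having matched all three operations, Proposition~\ref{pro:hongkong}(1) immediately yields that the triple
\[
\Big(\tot\big(\sections{\Lambda^\bullet L\dual\otimes\hat{S}B\dual}\otimes_R\Xpoly{\bullet}\big),\ \liederivative{Q},\ \text{wedge},\ \schouten{\argument}{\argument}\Big)
\]
is a differential Gerstenhaber algebra, hence in particular a dgla (remembering that a differential Gerstenhaber algebra is a dgla after the shift built into $\Xpoly{\bullet}=\sections{\Lambda^{\bullet+1}B}$, which is why the indexing in the statement is $\Xpoly{\bullet}$ rather than $\sections{\Lambda^{\bullet+1}\cF}$).

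The genuinely routine part is the module identification and the compatibility of wedge and bracket; the only place that needs a moment of care is the sign bookkeeping when one passes between the total degree used in $\tot$ (which combines the $\Lambda^\bullet L\dual$ degree with $k$) and the cohomological degree $+1$ of $\liederivative{Q}$, since this determines where the Koszul signs land and must be consistent with the convention in Equation~\eqref{eq:Rome}. I expect the main (and essentially only) obstacle to be a purely expository one: making sure that the dgla of Proposition~\ref{pro:hongkong}, stated for an abstract dg Lie algebroid, is being applied to $\cF\to\cM$ with the ``support'' graded manifold $L[1]\oplus B\oplus B$ correctly understood as formal in the fiber directions of the last $B$, so that $\enveloping{\cF}$-type completions and the formal power series $\hat{S}B\dual$ are compatible; once that is granted, the proof is a one-line invocation of Proposition~\ref{pro:hongkong}. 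I would therefore write the proof as: recall the identification of $\sections{\cM;\Lambda^{\bullet+1}\cF}$ with the stated total complex and of the structure maps with $\liederivative{Q}$, the wedge product, and the Schouten bracket; observe that these are exactly the data of Proposition~\ref{pro:hongkong} for the Fedosov dg Lie algebroid $\cF\to\cM$ of Proposition~\ref{pro:Rome}; conclude.
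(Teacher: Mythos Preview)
Your proposal is correct and follows essentially the same approach as the paper: the paper presents this proposition as an immediate consequence of applying Proposition~\ref{pro:hongkong} to the Fedosov dg Lie algebroid $\cF\to\cM$ of Proposition~\ref{pro:Rome}, after having recorded the identification $\sections{\cM;\Lambda^{k+1}\cF}\cong\sections{\Lambda^\bullet L\dual\otimes\hat{S}B\dual}\otimes_R\Xpoly{k}$ and the stability under $\liederivative{Q}$. Your write-up is more explicit about the matching of structure maps and the sign bookkeeping, but the argument is the same one-line invocation.
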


Finally, we consider the space of polydifferential operators
on the Fedosov dg Lie algebroid $\cF$ over $\cM$.

Let $\verticalDpoly{k}$ denote the space of formal vertical
$(k+1)$-differential operators on the vector bundle $B$
and set $\verticalDpoly{\bullet}=\bigoplus_{k=-1}^{\infty}\verticalDpoly{k}$.
There exists a canonical isomorphism
\begin{equation}\label{eq:varphi}
\begin{tikzcd}
\sections{\hat{S}(B\dual)\otimes
\underset{k+1\text{ factors}}{\underbrace{S(B)\otimes\cdots\otimes S(B)}}}
\arrow{r}{\varphi}[swap]{\isomorphism} &
\verticalDpoly{k}
\end{tikzcd} .
\end{equation}

In terms of local dual frames $\{\chi_i\}_{i=1,\ldots,r}$
and $\{\partial_j\}_{j=1,\ldots,r}$ for $B\dual$ and $B$ respectively,
and the corresponding local frames $\{\chi^I\}_{I\in\mathbb{N}^r}$
and $\{\partial_J\}_{J\in\mathbb{N}^r}$ for $ \hat{S}(B\dual)$
and $S(B)$ respectively, the isomorphism $\varphi$ sends
$\chi^I\otimes\partial_{J_0}\otimes\cdots\otimes\partial_{J_k}
\in\sections{\hat{S}(B\dual)\otimes
\underset{k+1\text{ factors}}{\underbrace{S(B)\otimes\cdots\otimes S(B)}}}$
to the $(k+1)$-differential operator
\[ \sections{\hat{S}(B\dual)}^{\otimes k+1}\ni
\chi^{I_0}\otimes\cdots\otimes\chi^{I_k}\longmapsto
\chi^I\cdot\partial_{J_0}(\chi^{I_0})
\cdots\partial_{J_k}(\chi^{I_k})
\in\sections{\hat{S}(B\dual)} .\]

The algebra $C^\infty(L[1]\oplus B)$ is a
module over its subalgebra
$\sections{\Lambda^\bullet L}\isomorphism\sections{\Lambda^\bullet L\dual\otimes S^0(B\dual)}$.
The subspace of $D_{\poly}^{\bullet}(L[1]\oplus B)$ comprised of all
$\sections{\Lambda^\bullet L\dual}$-multilinear polydifferential operators
is easily identified to
$\tot\big(\sections{\Lambda^\bullet L\dual}\otimes_R\verticalDpoly{\bullet}\big)$.
It is simple to see that the universal enveloping algebra
$\enveloping{\cF}$ of the dg Lie algebroid $\cF\to\cM$
is naturally identified
with $\sections{\Lambda^\bullet L\dual}\otimes_R\verticalDpoly{0}$,
which is a dg Hopf algebroid over
$\cR=C^\infty (\cM)\isomorphism\sections{\Lambda^\bullet L\dual \otimes\hat{S} B\dual }$.
Moreover,
$\enveloping{\cF}$ is a dg Hopf subalgebroid of $D_{\poly}^{0}(L[1]\oplus B)$.
Notice that
\begin{equation}\label{Halifax}
\suspended\enveloping{\cF}^{\otimes k+1}\isomorphism \sections{\Lambda^\bullet L\dual}
\otimes_R\verticalDpoly{k}
.\end{equation}

Since $\cF$ is a dg Lie subalgebroid of $T_\cM$,
the subspace
\[ \tot_\oplus\suspended\enveloping{\cF}^{\bullet+1}\isomorphism
\tot\big(\sections{\Lambda^\bullet L\dual}\otimes_R\verticalDpoly{\bullet}\big) \]
of $D_{\poly}^{\bullet}(\cM)$
is stable under the Hoch\-schild coboundary operator
$\gerstenhaber{Q+m}{\argument}$.
Here $m=1\otimes 1$ is the element of $\suspended\enveloping{\cF}^{\otimes 2}$
arising from the multiplication of $C^\infty(\cM)$.

The Lie bracket \eqref{eq:Gbraket} and the cup product \eqref{eq:cup}
on $\tot_\oplus\suspended\enveloping{\cF}^{\bullet+1}$ carry over to a
$\sections{\Lambda^\bullet L\dual}$-linear
Lie bracket and cup product on $\tot\big(\sections{\Lambda^\bullet L\dual}
\otimes_R\verticalDpoly{\bullet}\big)$ through the identification \eqref{Halifax}.

Applying Proposition~\ref{pro:hongkong1} to the
Fedosov dg Lie algebroid $\cF\to\cM$, we obtain the following

\begin{proposition}\label{Bujumbura}
\strut
\begin{enumerate}
\item The triple $\big( \tot\big(\sections{\Lambda^\bullet L\dual}
\otimes_R\verticalDpoly{\bullet}\big), \gerstenhaber{Q+m}{\argument},
\gerstenhaber{\argument}{\argument} \big)$ is a dgla.
\item The cohomology group
$\hypercohomology^\bullet \Big(\tot\big(\sections{\Lambda^\bullet L\dual}
\otimes_R\verticalDpoly{\bullet}\big), \gerstenhaber{Q+ m}{\argument}\Big)$
equipped with the induced Lie bracket and cup product
is a Gerstenhaber algebra.
\end{enumerate}
\end{proposition}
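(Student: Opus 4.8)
The plan is to obtain both statements as an immediate application of Proposition~\ref{pro:hongkong1} to the Fedosov dg Lie algebroid $\cF\to\cM$, reading off the concrete description of the resulting dgla and Gerstenhaber algebra through the identifications assembled earlier in this subsection. By Proposition~\ref{pro:Rome}, $\cF\to\cM$ is a bona fide dg Lie algebroid, so Proposition~\ref{pro:hongkong1} applies verbatim: it provides a dgla structure on $\enveloping{\cF}^{\otimes\bullet+1}$ with differential $\mathcal{Q}+\hochschild$ and Gerstenhaber bracket $\gerstenhaber{\argument}{\argument}$, together with a Gerstenhaber algebra structure on its Hochschild cohomology for the cup product and the induced bracket. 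The task is then purely one of translation.

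The steps I would carry out are the following. First, assemble the isomorphisms $\enveloping{\cF}^{\otimes k+1}\cong\sections{\Lambda^\bullet L\dual}\otimes_R\verticalDpoly{k}$ recorded above --- which rest on $C^\infty(\cM)\cong\sections{\Lambda^\bullet L\dual\otimes\hat{S}B\dual}$ and on the isomorphism $\varphi$ of \eqref{varphi} --- into a single degree-preserving isomorphism of graded vector spaces $\enveloping{\cF}^{\otimes\bullet+1}\cong\tot\big(\sections{\Lambda^\bullet L\dual}\otimes_R\verticalDpoly{\bullet}\big)$. Second, use the fact (stated above) that $\enveloping{\cF}$ is a dg Hopf subalgebroid of $D_{\poly}^0(\cM)=\enveloping{T_\cM}$ to conclude that the coproduct, product, differential, Hochschild differential, Gerstenhaber bracket, and cup product on $\enveloping{\cF}^{\otimes\bullet+1}$ are all restrictions of the corresponding operations on $D_{\poly}^\bullet(\cM)$; in particular $\mathcal{Q}$ is the restriction of $\liederivative{Q}$, the $\liederivative{Q}$-stability of this subspace being exactly what was checked in the proof of Proposition~\ref{pro:Rome}. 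Third, invoke the standard reformulation of the differential of the polydifferential complex of a (dg) Lie algebroid \cite{MR1815717}, namely $\mathcal{Q}+\hochschild=\gerstenhaber{Q+m}{\argument}$ on $D_{\poly}^\bullet(\cM)$, to identify the transported differential with the operator in the statement. Fourth, transport the dgla and Gerstenhaber algebra structures of Proposition~\ref{pro:hongkong1} along the isomorphism of the first step; this yields (1) and (2).

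The main obstacle I anticipate is the second and third steps taken together: one must be genuinely careful that the abstract dg Hopf algebroid structure furnished on $\enveloping{\cF}$ by the dg Lie algebroid $\cF\to\cM$ really coincides with the one inherited by restriction from $D_{\poly}^0(\cM)$ --- i.e.\ that $\enveloping{\cF}\into\enveloping{T_\cM}$ is a morphism of dg Hopf algebroids and not merely of dg algebras --- and, granting this, that under $\varphi$ the composite operator $\mathcal{Q}+\hochschild$ matches $\gerstenhaber{Q+m}{\argument}$ on the nose, signs included (the relevant signs being those of \eqref{hola} and \eqref{hazmat}). Once these compatibilities are in place, the remainder is formal: the graded Jacobi identity, the compatibility of $\gerstenhaber{Q+m}{\argument}$ with the bracket, and the Gerstenhaber relations in cohomology are all inherited directly from Proposition~\ref{pro:hongkong1}.
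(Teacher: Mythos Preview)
Your proposal is correct and follows exactly the paper's approach: the paper simply states that the proposition is obtained by ``applying Proposition~\ref{pro:hongkong1} to the Fedosov dg Lie algebroid $\cF\to\cM$,'' leaving the identifications you spell out implicit. Your more careful discussion of the compatibilities (that $\enveloping{\cF}\hookrightarrow\enveloping{T_\cM}$ respects all the Hopf algebroid structure, and that $\mathcal{Q}+\hochschild$ matches $\gerstenhaber{Q+m}{\argument}$) is a welcome elaboration of what the paper takes for granted.
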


\section{\texorpdfstring{$L_\infty$}{L∞} algebra structures}
\label{Chatanooga}

In this section, we endow the spaces of polyvector fields and polydifferential
operators of a Lie pair --- see Section~\ref{Boise} --- with $L_\infty$ algebra structures,
which are canonical up to $L_\infty$ isomorphism.

\subsection{Dolgushev--Fedosov contraction and \texorpdfstring{$L_\infty$}{L∞} algebra structure on the space of polyvector fields of a Lie pair}

The following lemma is straightforward.

\begin{lemma}
The subspace $\sections{\Lambda^\bullet L\dual}\otimes_R\verticalTpoly{k}$
of the space $T_{\poly}^{k}(L[1]\oplus B)$ of $(k+1)$-vector fields on $L[1]\oplus B$
is stable under $\lie{\delta}$.
\end{lemma}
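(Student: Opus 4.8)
The plan is to recognize $\delta$ as a genuine section of the Fedosov dg Lie algebroid $\cF\to\cM$, and then to deduce stability from the fact --- already exploited in the proof of Proposition~\ref{pro:Rome} --- that $\cF\subset T_{\cM}$ is a $\ZZ$-graded Lie subalgebroid. Under the inclusion $\sections{\cM;\cF}\into\XX(\cM)$ recalled above, the section $\sum_{k=1}^r (q^\top\chi_k\otimes 1)\otimes\partial_k\in C^\infty(\cM)\otimes_{C^\infty(M)}\sections{B}$ is carried to the derivation $\mu\otimes\chi^M\mapsto\sum_{k}(q^\top\chi_k\wedge\mu)\otimes M_k\,\chi^{M-e_k}$ of $C^\infty(\cM)=\sections{\Lambda^\bullet L\dual\otimes\hat{S}B\dual}$, which is precisely $\delta$. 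First I would check that this section is independent of the chosen local frame $\{\chi_k\}$ of $B\dual$: it is nothing but the image of the bundle map $q\colon L\to B$ under $\sections{L\dual\otimes B}\subset\sections{\Lambda^\bullet L\dual\otimes\hat{S}B\dual\otimes B}=\sections{\cM;\cF}$, and, since $q^\top\chi_k\in\sections{\Lambda^1 L\dual}$, this section has degree $+1$, as $\delta$ must.

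Once $\delta$ is identified with a degree $(+1)$ section of $\cF$, the conclusion is immediate: the Lie derivative $\liederivative{\delta}$ on $T_{\poly}^{k}(\cM)=\sections{\Lambda^{k+1}T_{\cM}}$ is the Schouten--Nijenhuis bracket $[\delta,\argument]$, and because $\cF\subset T_{\cM}$ is a graded Lie subalgebroid this bracket restricts to the Schouten bracket of $\Lambda^\bullet\cF$; hence $[\delta,\argument]$ preserves $\sections{\cM;\Lambda^{k+1}\cF}$, which under the identifications recalled just before Proposition~\ref{pro:Lyon} is exactly $\sections{\Lambda^\bullet L\dual\otimes\hat{S}B\dual}\otimes_R\Xpoly{k}\cong\sections{\Lambda^\bullet L\dual}\otimes_R\verticalXpoly{k}$.

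Should one wish to avoid invoking the subalgebroid property, the same conclusion follows from the derivation properties of $\liederivative{\delta}$. Writing a general element of $\sections{\Lambda^\bullet L\dual}\otimes_R\verticalXpoly{k}$ as a $C^\infty(\cM)$-linear combination of wedges $b_0\wedge\cdots\wedge b_k$ with $b_i\in\sections{B}$, regarded as fiberwise constant vertical vector fields on $B$, one uses $\liederivative{\delta}(g\cdot P)=\delta(g)\cdot P\pm g\cdot\liederivative{\delta}(P)$ for $g\in C^\infty(\cM)$ together with $\liederivative{\delta}(b_0\wedge\cdots\wedge b_k)=\sum_i\pm\,b_0\wedge\cdots\wedge[\delta,b_i]\wedge\cdots\wedge b_k$ to reduce the whole statement to the single fact that $[\delta,b]\in\sections{\Lambda^\bullet L\dual}\otimes_R\verticalXpoly{0}$ for every $b\in\sections{B}$; this last point is checked by a one-line computation in a local frame (equivalently, $[\delta,b]$ annihilates the subalgebra $\sections{\Lambda^\bullet L\dual}$ of $C^\infty(\cM)$, hence is a vertical vector field).

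The only genuinely substantive step is the first one: matching the index and sign conventions so that the explicit formula for $\delta$ coincides with the image of $\sum_k (q^\top\chi_k\otimes 1)\otimes\partial_k$ under $\sections{\cM;\cF}\into\XX(\cM)$, and verifying that the resulting section of $\cF$ is globally well defined. Everything afterwards --- the identification of $\liederivative{\delta}$ with $[\delta,\argument]$ and the stability of $\sections{\Lambda^{k+1}\cF}$ under bracketing with a section of $\cF$ --- is formal, given that $\cF$ is a Lie subalgebroid of $T_{\cM}$.
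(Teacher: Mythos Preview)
Your proof is correct. The paper itself does not give a detailed proof of this lemma; it simply records, immediately after the statement, the commutative diagram identifying $\liederivative{\delta}$ with $\delta\otimes\id$ under the isomorphism $\sections{\Lambda^{i}L\dual}\otimes_R\verticalXpoly{k}\cong\sections{\Lambda^{i}L\dual\otimes\hat{S}(B\dual)}\otimes_R\Xpoly{k}$, from which stability is immediate.

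Your primary argument is more conceptual than the paper's: rather than computing $\liederivative{\delta}$ directly, you observe that $\delta$ is itself a section of $\cF$ (namely the image of $q\in\sections{L\dual\otimes B}$), so stability follows from the fact that $\cF\subset T_{\cM}$ is closed under the Schouten bracket. This is a cleaner explanation and makes the result a formal consequence of Proposition~\ref{pro:Rome}. Your alternative argument via the derivation property and the vanishing $[\delta,b]=0$ for $b\in\sections{B}$ (since the coefficients $q^\top\chi_k$ of $\delta$ lie in $\sections{\Lambda^1 L\dual\otimes S^0 B\dual}$ and are annihilated by vertical differentiation) is essentially what underlies the paper's commutative diagram; indeed, this vanishing is exactly why $\liederivative{\delta}$ reduces to $\delta\otimes\id$ on the $\Xpoly{k}$ factor. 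So your second approach coincides with the paper's, while your first approach packages the same computation inside the Lie subalgebroid statement.
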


Also, note that the following diagram commutes:
\[ \begin{tikzcd}
\sections{\Lambda^{i}L\dual}\otimes_R\verticalTpoly{k}
\arrow{r}{\lie{\delta}}
\arrow[leftrightarrow]{d}{\isomorphism}
&
\sections{\Lambda^{i+1}L\dual}\otimes_R\verticalTpoly{k}
\arrow[leftrightarrow]{d}{\isomorphism}
\\
\sections{\Lambda^{i}L\dual\otimes\hat{S}(B\dual)}
\otimes_R \Tpoly{k}
\arrow{r}[swap]{\delta\otimes\id}
&
\sections{\Lambda^{i+1}L\dual\otimes\hat{S}(B\dual)}
\otimes_R \Tpoly{k}
\end{tikzcd} \]

Since the vector field $\delta$ on $L[1]\oplus B$
is homological, we obtain the cochain complex
\[ \begin{tikzcd}
\cdots \arrow{r} &
\sections{\Lambda^{i}L\dual}\otimes_R\verticalTpoly{k}
\arrow{r}{\lie{-\delta}} &
\sections{\Lambda^{i+1}L\dual}\otimes_R\verticalTpoly{k}
\arrow{r} & \cdots
\end{tikzcd} \]
which admits the descending filtration
\[ \mathscr{F}_m=\bigoplus_{i=0}^{\rk(L)} \sections{\Lambda^i L\dual
\otimes\hat{S}^{\geqslant m-i} B\dual }\otimes_R \Tpoly{k} .\]

We shall denote by $\etendu{\tau}$, $\etendu{\sigma}$ and
$\etendu{h}$ the maps defined by the following commutative diagrams
(where $\tau$, $\sigma$, and $h$ are the maps introduced
in Section~\ref{Fedosov_dg_abd})
\begin{equation}\label{eq:etendusigma}
\begin{tikzcd}[row sep=tiny]
\sections{\Lambda^{i}L\dual}\otimes_R\verticalTpoly{k}
\arrow{dr}{\etendu{\sigma}}
\arrow[leftrightarrow]{dd}{\isomorphism} & \\
& \sections{\Lambda^{i}A\dual}\otimes_R\Tpoly{k} \\
\sections{\Lambda^{i}L\dual\otimes\hat{S} B\dual }
\otimes_R \Tpoly{k}
\arrow{ur}[swap]{\sigma\otimes\id} &
\end{tikzcd}
\end{equation}

\[ \begin{tikzcd}[row sep=tiny]
& \sections{\Lambda^{i}L\dual}\otimes_R\verticalTpoly{k}
\arrow[leftrightarrow]{dd}{\isomorphism} \\
\sections{\Lambda^{i}A\dual}\otimes_R\Tpoly{k}
\arrow{ur}{\etendu{\tau}}
\arrow{dr}[swap]{\tau\otimes\id} & \\
& \sections{\Lambda^{i}L\dual\otimes\hat{S} B\dual }
\otimes_R \Tpoly{k}
\end{tikzcd} \]

\begin{equation}\label{eq:etenduh}
\begin{tikzcd}
\sections{\Lambda^{i}L\dual}\otimes_R\verticalTpoly{k}
\arrow{r}{\etendu{h}}
\arrow[leftrightarrow]{d}{\isomorphism}
&
\sections{\Lambda^{i-1}L\dual}\otimes_R\verticalTpoly{k}
\arrow[leftrightarrow]{d}{\isomorphism}
\\
\sections{\Lambda^{i}L\dual\otimes\hat{S}(B\dual)}
\otimes_R\Tpoly{k}
\arrow{r}[swap]{h\otimes\id}
&
\sections{\Lambda^{i-1}L\dual\otimes\hat{S}(B\dual)}
\otimes_R\Tpoly{k}
\end{tikzcd}
\end{equation}

Adapting the proof of~\cite[Proposition~4.3]{MR4150934}, we obtain
\begin{proposition}\label{Shiraz}
The complex
$\big( \sections{\Lambda^{\bullet}L\dual}
\otimes_R\verticalTpoly{k}, \lie{-\delta} \big)$
contracts onto
$\big( \sections{\Lambda^{\bullet}A\dual}
\otimes_R\Tpoly{k}, 0 \big)$.
More precisely, we have the filtered contraction
\begin{dontbotheriftheequationisoverlong} \begin{tikzcd}
\cdots \arrow{r} & \sections{\Lambda^{n-1} L\dual}\otimes_R\verticalTpoly{k}
\arrow{d}{\etendu{\sigma}}
\arrow{r}{\lie{-\delta}} &
\sections{\Lambda^{n} L\dual}\otimes_R\verticalTpoly{k}
\arrow{d}{\etendu{\sigma}}
\arrow{r}{\lie{-\delta}}
\arrow[dashed]{ddl}[near end]{\etendu{h}} &
\sections{\Lambda^{n+1} L\dual }\otimes_R\verticalTpoly{k}
\arrow{d}{\etendu{\sigma}} \arrow{r}
\arrow[dashed]{ddl}[near end]{\etendu{h}} & \cdots \\
\cdots \arrow{r} &
\sections{\Lambda^{n-1} A\dual}\otimes_R\Tpoly{k}
\arrow{d}[swap]{\etendu{\tau}} \arrow{r}[near start]{0} &
\sections{\Lambda^{n} A\dual}\otimes_R\Tpoly{k}
\arrow{d}[swap]{\etendu{\tau}} \arrow{r}[near start]{0} &
\sections{\Lambda^{n+1} A\dual}\otimes_R\Tpoly{k}
\arrow{d}[swap]{\etendu{\tau}} \arrow{r} & \cdots \\
\cdots \arrow{r} &
\sections{\Lambda^{n-1} L\dual }\otimes_R\verticalTpoly{k}
\arrow{r}{\lie{-\delta}} &
\sections{\Lambda^{n} L\dual }\otimes_R\verticalTpoly{k}
\arrow{r}{\lie{-\delta}} &
\sections{\Lambda^{n+1} L\dual }\otimes_R\verticalTpoly{k}
\arrow{r} & \cdots
\end{tikzcd} \end{dontbotheriftheequationisoverlong}
where $\etendu{\tau}$, $\etendu{\sigma}$ and
$\etendu{h}$ are defined by the above commutative diagrams \eqref{eq:etendusigma}-\eqref{eq:etenduh}.
\end{proposition}

\begin{lemma}\label{lem:polyvsemifull}
The contraction $(\tau_\natural,\sigma_\natural,h_\natural)$
in Proposition~\ref{Shiraz} is a semifull algebra contraction
--- on both sides, the associative multiplication is the wedge product.
Moreover, the maps $\etendu{\tau}$ and $\etendu{\sigma}$
preserve the wedge products.
\end{lemma}

\begin{proof}
This follows immediately from the definitions
and the corresponding statements for $(\tau,\sigma,h)$
--- see Lemma~\ref{lem:semifull}.
\end{proof}

Consider the homological vector field $Q$ on $L[1]\oplus B$
introduced in Theorem~\ref{strawberry}:
\[ Q=-\delta+\perturbation \qquad\text{with}\qquad
\perturbation=d_L^\nabla+X^\nabla
\qquad\text{and}\qquad
X^\nabla\in\sections{L\dual\otimes
\hat{S}^{\geqslant 2}(B\dual)\otimes B} .\]

\begin{proposition}\label{Bagnolet}
There exists a contraction
\begin{equation}\label{eq:Bagnolet}
\begin{tikzcd}[cramped]
\Big(\tot\big(\wa\otimes_R
\Tpoly{\bullet}\big)
,d_A^{\Bott}\Big)
\arrow[r, " \etendu{\perturbed{\tau}}", shift left] &
\Big(\tot\big(\sections{\Lambda^\bullet L\dual}\otimes_R
\verticalTpoly{\bullet}\big), \lie{Q}\Big)
\arrow[l, " \etendu{\sigma}", shift left]
\arrow[loop, "\etendu{\perturbed{h}}",out=5,in=-5,looseness = 3]
\end{tikzcd}
\end{equation}
More precisely, for every $k\geqslant -1$, we have the (filtered) contraction
\begin{dontbotheriftheequationisoverlong} \begin{tikzcd}
\cdots \arrow{r} & \sections{\Lambda^{n-1}L\dual}\otimes_R\verticalTpoly{k}
\arrow{d}{\etendu{\sigma}}
\arrow{r}{\lie{Q}} &
\sections{\Lambda^{n}L\dual}\otimes_R\verticalTpoly{k}
\arrow{d}{\etendu{\sigma}}
\arrow{r}{\lie{Q}}
\arrow[dashed]{ddl}[near end]{\etendu{\perturbed{h}}} &
\sections{\Lambda^{n+1}L\dual}\otimes_R\verticalTpoly{k}
\arrow{d}{\etendu{\sigma}} \arrow{r}
\arrow[dashed]{ddl}[near end]{\etendu{\perturbed{h}}} & \cdots \\
\cdots \arrow{r} &
\sections{\Lambda^{n-1}A\dual}\otimes_R\Tpoly{k}
\arrow{d}[swap]{\etendu{\perturbed{\tau}}} \arrow{r}{d_A^{\Bott}} &
\sections{\Lambda^{n}A\dual}\otimes_R\Tpoly{k}
\arrow{d}[swap]{\etendu{\perturbed{\tau}}} \arrow{r}{d_A^{\Bott}} &
\sections{\Lambda^{n+1}A\dual}\otimes_R\Tpoly{k}
\arrow{d}[swap]{\etendu{\perturbed{\tau}}} \arrow{r} & \cdots \\
\cdots \arrow{r} &
\sections{\Lambda^{n-1}L\dual}\otimes_R\verticalTpoly{k}
\arrow{r}{\lie{Q}} &
\sections{\Lambda^{n}L\dual}\otimes_R\verticalTpoly{k}
\arrow{r}{\lie{Q}} &
\sections{\Lambda^{n+1}L\dual}\otimes_R\verticalTpoly{k}
\arrow{r} & \cdots
\end{tikzcd} \end{dontbotheriftheequationisoverlong}
where
\begin{equation}\label{eq:perturbedetendutau}
\etendu{\perturbed{h}}=\sum_{l=0}^\infty
(\etendu{h}\lie{\perturbation})^l \etendu{h},
\quad\text{ and }\quad \etendu{\perturbed{\tau}}=\sum_{l=0}^\infty
(\etendu{h}\lie{\perturbation})^l\etendu{\tau}.
\end{equation}
Moreover, the cochain maps $\etendu{\perturbed{\tau}}$
and $\etendu{\sigma}$ intertwine the wedge products
on their domain and codomain.
\end{proposition}

As an immediate consequence of Proposition~\ref{Bagnolet},
by considering the bigradings on both sides of~\eqref{eq:Bagnolet},
we obtain the following

\begin{corollary}\label{Smith}
For every $k\geqslant -1$, we have a contraction
\[ \begin{tikzcd}[cramped]
\Big(\wa\otimes_R\Tpoly{k}\big),\dabott \Big)
\arrow[r, shift left, "\etendu{\perturbed{\tau}}"] &
\Big(\sections{\Lambda^\bullet L\dual}\otimes_R\verticalTpoly{k}\big),\lie{Q}\Big)
\arrow[l, shift left, "\etendu{\sigma}"]
\arrow[loop, "\etendu{\perturbed{h}}", out=5,in=-5,looseness = 3]
\end{tikzcd} \]
\end{corollary}

The case $k=-1$ was established in~\cite[Proposition~5.4]{MR4150934}.

The proof of Proposition~\ref{Bagnolet} requires the following technical results.

\begin{lemma}\label{Cergy}
Let $\po$ denote the canonical projection
$\hat{S}(B\dual)\otimes B\onto
S^0(B\dual)\otimes B$.
For all $a\in\sections{A}$ and $j\in\{1,\dots,r\}$, we have
\[ \po\big(\schouten{\cn_a}{\partial_j}\big)
=\nabla^{\Bott}_a(\partial_j) .\]
Recall that $\{\partial_j\}_{j=1,\dots,r}$ is a local frame
for the vector subbundle $B\isomorphism S^0(B\dual)\otimes B$
of $\hat{S}(B\dual)\otimes B$. Here we think of $\partial_j$ as a local
section of $\hat{S}(B\dual)\otimes B$.
The sections of the vector bundle $\hat{S}(B\dual)\otimes B$
may be interpreted as fiberwise formal vertical vector fields on $B$
--- they act as derivations of the algebra $\sections{\hat{S}(B\dual)}$
of fiberwise formal functions on $B$ in a natural fashion.
\end{lemma}

\begin{proof}
We have seen that, for all $a\in\sections{A}$,
the operator $\cn_a$ is a derivation of
$\sections{\hat{S}(B\dual)}$, which stabilizes
the filtration $\sections{\hat{S}^{\geqslant n}(B\dual)}$.
Therefore, there exist local sections
$\theta_k^M$ of $A\dual$ such that
\[ \cn_a \chi_k=
\sum_{\substack{M\in\NO^r \\ \length{M}\geqslant 1}}
\interior{a}\theta_k^M\cdot \chi^M .\]
It follows that $\cn_a$ may be regarded
as a section of $\hat{S}^{\geqslant 1}(B\dual)\otimes B$:
\[ \cn_a =
\sum_{k=1}^r \bigg(\sum_{\substack{M\in\NO^r \\ \length{M}\geqslant 1}}
\interior{a}\theta_k^M\cdot \chi^M\bigg) \partial_k .\]

On one hand, it follows from
\begin{multline*} \schouten{\cn_a}{\partial_j}
= \cn_a\circ\partial_j - \partial_j\circ
\cn_a =
\sum_{k=1}^r \sum_{\length{M}\geqslant 1}
\interior{a}\theta_k^M\cdot\chi^M\partial_k\circ\partial_j
-\sum_{k=1}^r \sum_{\length{M}\geqslant 1}
\interior{a}\theta_k^M\cdot
\partial_j\circ(\chi^M\partial_k) \\
=-\sum_{k=1}^r \sum_{\length{M}\geqslant 1}
\interior{a}\theta_k^M\cdot M_j \chi^{M-e_j}\cdot\partial_k
\end{multline*}
that
\[ \po\big(\schouten{\cn_a}{\partial_j}\big)
= -\sum_{k=1}^r \interior{a}\theta_k^{e_j} \cdot \partial_k .\]
On the other hand, it follows from
\[ 0=\anchor_{a}\underset{\delta_{k,j}}{\underbrace{\duality{\chi_k}{\partial_j}}}
=\duality{\cn_a\chi_k}{\partial_j}
+\duality{\chi_k}{\cn_a\partial_j} \]
and the fact that $\cn_a$ stabilizes the
subspace $\sections{S^1(B)}$ of $\sections{S(B)}$
that
\[ \begin{split}
\cn_a\big(\partial_j\big)
=&\ \sum_k \duality{\chi_k}{
\cn_a \partial_j}\partial_k \\
=&\ -\sum_k \duality{\cn_a \chi_k}{\partial_j} \partial_k \\
=&\ -\sum_k \sum_{\length{M}\geqslant 1}
\interior{a}\theta_k^M\cdot\duality{\chi^M}{\partial_j}
\cdot\partial_k \\
=&\ -\sum_k \interior{a}\theta_k^{e_j}\cdot\partial_k
.\end{split} \]
Finally, for all $a\in\sections{A}$
and $b\in\sections{B}$, we have
$\cn_a(b)=\nabla^{\Bott}_a(b)$ as
\begin{multline*}
\pbw(\cn_a b-\nabla^{\Bott}_a b)
= a\cdot\pbw(b)-\pbw\big(q\bracket{a}{j(b)}\big) \\
= a\cdot j(b)-j\circ q(\bracket{a}{j(b)})
= j(b)\cdot a+ \underset{\in\sections{A}}
{\underbrace{p(\bracket{a}{j(b)})}} =0
\end{multline*}
in $\frac{\enveloping{L}}{\enveloping{L}\sections{A}}$.
The proof is complete.
\end{proof}

\begin{lemma}\label{Drancy}
$\etendu{\sigma}\circ \lie{\perturbation}\circ \etendu{\tau}=d_A^{\Bott}$
\end{lemma}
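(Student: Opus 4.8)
The plan is to prove Lemma~\ref{Drancy} by a direct computation, working filtration-by-filtration and reducing everything to the pointwise identity established in Lemma~\ref{Cergy}. Recall that $\varrho = d_L^\nabla + X^\nabla$ with $X^\nabla\in\sections{L\dual\otimes\hat{S}^{\geqslant 2}B\dual\otimes B}$, and that $\etendu{\tau}$ inserts $\alpha\mapsto p^\top(\alpha)\otimes 1$ in the $\hat{S}^0 B\dual$-component while $\etendu{\sigma}$ projects onto the $\hat{S}^0 B\dual$-component with $v=0$ form-degree in $q^\top(\Lambda^\bullet B\dual)$. First I would observe that $\etendu{\sigma}\liederivative{\varrho}\etendu{\tau}$ makes sense as a map $\sections{\Lambda^\bullet A\dual}\otimes_R\Xpoly{k}\to\sections{\Lambda^{\bullet+1}A\dual}\otimes_R\Xpoly{k}$, and that since $\etendu{\sigma}$ kills everything outside $\hat{S}^0 B\dual$, only the part of $\liederivative{\varrho}\etendu{\tau}(\alpha\otimes\xi)$ lying in $\hat{S}^0 B\dual$ survives. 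The $X^\nabla$-contribution raises symmetric degree by at least $1$ (since $X^\nabla$ has values in $\hat{S}^{\geqslant 2}B\dual\otimes B$, and $\liederivative{X^\nabla}$ acting on $\hat{S}^0\otimes\Lambda^{k+1}B$ lands in $\hat{S}^{\geqslant 1}$), hence is annihilated by $\etendu{\sigma}$; so $\etendu{\sigma}\liederivative{\varrho}\etendu{\tau} = \etendu{\sigma}\liederivative{d_L^\nabla}\etendu{\tau}$.

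Next I would unwind $\liederivative{d_L^\nabla}$ on an element $p^\top(\alpha)\otimes 1\otimes (b_0\wedge\cdots\wedge b_k)$, where I think of the $\verticalXpoly{k}$-factor as $\sections{\hat{S}B\dual\otimes\Lambda^{k+1}B}$. The Chevalley--Eilenberg differential $d_L^\nabla$ splits into the piece differentiating the $\Lambda^\bullet L\dual$-factor via the $L$-bracket (contributing $d_L$ applied to $p^\top(\alpha)$, whose $i^\top$-projection is $d_A\alpha$) and the piece acting by the connection $\nabla$ on the $\hat{S}B\dual\otimes B$-factors. Because the $\hat{S}^0$-component is being isolated by $\etendu{\sigma}$ and $\nabla$ extends the Bott connection, restricting $\nabla$ to the subspace of constant (i.e.\ $\hat{S}^0$) sections and projecting back with $\po$ reproduces exactly $\nabla^{\Bott}$ on each $b_j\in\sections{B}$; this is the content one extracts from Lemma~\ref{Cergy}, applied to each tensor factor $\partial_j$. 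Assembling the $d_A\alpha\otimes(b_0\wedge\cdots\wedge b_k)$ term and the $k+1$ terms $\sum_j(\alpha_j\wedge\alpha)\otimes(b_0\wedge\cdots\wedge\nabla^{\Bott}_{a_j}b_\ell\wedge\cdots\wedge b_k)$ recovers precisely the formula defining $d_A^{\Bott}$ from Section~1.

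The main obstacle will be bookkeeping the signs and the interaction between the form-degree grading (the $\Lambda^\bullet L\dual$ versus $\Lambda^\bullet A\dual$ factors, and the $q^\top(\Lambda^\bullet B\dual)$ versus $\hat{S}B\dual$ splitting that defines $\sigma$ and $h$) and the Lie-derivative/Schouten conventions; in particular one must check that the wedge insertion of the dual frame $\alpha_j$ emerges with the sign dictated by~\eqref{eq:Rome} and matches the sign in the defining formula for $d_A^{\Bott}$. A secondary subtlety is justifying rigorously that $X^\nabla$ contributes nothing: one needs that $\liederivative{X^\nabla}$ strictly increases the symmetric filtration degree on the relevant subspace, which follows from $X^\nabla\in\sections{L\dual\otimes\hat{S}^{\geqslant 2}B\dual\otimes B}$ together with the fact that $\etendu{\tau}$ produces only $\hat{S}^0$-components, so that after one application of $\liederivative{X^\nabla}$ the symmetric degree is at least $1$, outside the image of $\etendu{\tau}$-type terms detected by $\etendu{\sigma}$. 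Once these two points are settled, the identity $\etendu{\sigma}\liederivative{\varrho}\etendu{\tau}=d_A^{\Bott}$ follows by comparing the explicit local formulas term by term.
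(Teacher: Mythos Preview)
Your approach is correct and close to the paper's, but the organization differs in one point worth noting. The paper does \emph{not} first discard $X^\nabla$ by filtration; instead it keeps $\varrho=d_L^\nabla+X^\nabla$ together, expands $\liederivative{\varrho}$ in a local frame so that the vertical part contracted with $a\in\sections{A}$ becomes the flat derivation $\nabla^{\lightning}_a$, and then invokes Lemma~\ref{Cergy} in its full form $\po\schouten{\nabla^{\lightning}_a}{\partial_j}=\nabla^{\Bott}_a(\partial_j)$ to collapse everything to the Bott connection in one step. Your route---kill $X^\nabla$ first because it raises the symmetric filtration, then compute with $d_L^\nabla$ alone---is equally valid and arguably cleaner, but once $X^\nabla$ is gone you no longer need Lemma~\ref{Cergy}: all that remains is the standing hypothesis that $\nabla$ extends the Bott $A$-connection, together with the observation that $\etendu{\sigma}$ also kills the $q^\top(B\dual)$-components of the frame $\lambda_k$, so only the $p^\top(\alpha_k)$-terms survive. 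Your invocation of Lemma~\ref{Cergy} at that stage is therefore harmless but superfluous.
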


\begin{proof}
Let $(l_k)_{k\in\{1,\dots,\rk{L}\}}$ denote any local frame of $L$
and let $(\lambda_k)_{k\in\{1,\dots,\rk{L}\}}$ denote the dual local frame of $L\dual$.
Likewise, let $(a_k)_{k\in\{1,\dots,\rk{A}\}}$ denote any local frame of $A$
and let $(\alpha_k)_{k\in\{1,\dots,\rk{A}\}}$ denote the dual local frame of $A\dual$.
For all $\omega\in\wa$, $n\in\NN$, and
$j_0,\dots,j_n\in\{1,\dots,r\}$, we have
\begin{align*}
&\ \etendu{\sigma}\Big(
\schouten{\perturbation}
{\etendu{\tau}\big(\omega\otimes\partial_{j_0}\wedge
\cdots \wedge\partial_{j_n}\big)}\Big) \\
=&\ \etendu{\sigma}
\Big( \schouten{\perturbation}
{p^\top\omega\otimes 1\otimes\partial_{j_0}\wedge
\cdots\wedge\partial_{j_n})} \Big) \\
=&\ \etendu{\sigma} \Big(
d_L(p^\top\omega)\otimes 1\otimes\partial_{j_0}\wedge
\cdots\wedge\partial_{j_n}) \\
&\qquad +\sum_k\lambda_k\wedge p^\top\omega\otimes
\schouten{\nabla_{l_k}-\interior{l_k}\Xi}{1\otimes\partial_{j_0}\wedge\cdots\wedge\partial_{j_n}}
\Big) \\
=&\ \sigma\big(d_L(p^\top\omega)\otimes 1\big)
\otimes \partial_{j_0}\wedge\cdots
\wedge\partial_{j_n} \\
&\qquad +\sum_k \etendu{\sigma}
\Big( p^\top\alpha_k\wedge p^\top\omega
\otimes \schouten{\cn_{a_k}}
{1\otimes\partial_{j_0}\wedge\cdots\wedge
\partial_{j_n}} \Big) \\
=&\ d_A\omega\otimes \partial_{j_0}\wedge\cdots
\wedge\partial_{j_n} \\
&\qquad +\sum_k \etendu{\sigma}
\bigg(p^\top(\alpha_k\wedge\omega)\otimes
\bigg\{ \sum_{t=0}^n
1\otimes\partial_{j_0}\wedge\cdots\wedge
\schouten{\cn_{a_k}}{\partial_{j_t}}\wedge\cdots\wedge\partial_{j_n}
\bigg\} \bigg) \\
=&\ d_A\omega\otimes\partial_{j_0}\wedge\cdots
\wedge\partial_{j_n} \\
&\qquad +\sum_k\sum_{t=0}^n
\alpha_k\wedge\omega\otimes
\partial_{j_0}\wedge\cdots\wedge
\po\schouten{\cn_{a_k}}
{\partial_{j_t}}\wedge\cdots\wedge\partial_{j_n}
.\end{align*}
It follows from Lemma~\ref{Cergy} that
\[ \po\schouten{\cn_{a_k}}
{\partial_{j_t}} = \cn_{a_k}(\partial_{j_t})
= \nabla^{\Bott}_{a_k}(\partial_{j_t}) .\]
Hence, we conclude that $\etendu{\sigma}\circ\lie{\perturbation}
\circ\etendu{\tau}=d_A^{\Bott}$.
\end{proof}

\begin{proof}[Proof of Proposition~\ref{Bagnolet}]
We proceed by homological perturbation
--- see Lemma~\ref{HPL} and also~\cite[Lemma~A.1]{MR4150934}
and~\cite[\S1]{MR1109665}.
Starting from the filtered contraction of Proposition~\ref{Shiraz}, it suffices to perturb
the coboundary operator $\lie{-\delta}$ by the operator $\lie{\perturbation}$.
One checks that
$\etendu{\sigma}\lie{\perturbation}\etendu{h}=0$.
It follows that
\[ \etendu{\perturbed{\sigma}}:=\sum_{l=0}^\infty
\etendu{\sigma}(\lie{\perturbation}\etendu{h})^l
=\etendu{\sigma} \]
and, making use of Lemma~\ref{Drancy},
\[ \vartheta:=\sum_{l=0}^\infty
\etendu{\sigma}(\lie{\perturbation} \etendu{h})^l\lie{\perturbation}\etendu{\tau}
=\etendu{\sigma}\lie{\perturbation}\etendu{\tau}
=d_A^{\Bott} .\]
The result follows immediately since $-\delta+\perturbation=Q$.

Finally, the claim that $\sigma_{\natural}$ is compatible
with the wedge products is contained in Lemma~\ref{lem:polyvsemifull},
while the same statement for $\perturbed{\tau}_\natural$
follows from Lemmas~\ref{lem:polyvsemifull}, \ref{lem:semifullstable}
and~\ref{lem:semifulltau}.
\end{proof}

The next proposition gives an alternative characterization of the
map $\etendu{\perturbed{\tau}}$ as the solution of an initial value problem.

\begin{proposition}\label{Mandalay}
Given $x\in\wa\otimes_R\Tpoly{\diamond}$
and $y\in\sections{\Lambda^\bullet L\dual}
\otimes_R\verticalTpoly{\diamond}$,
we have
\[ \etendu{\perturbed{\tau}}(x)=y
\qquad\text{if and only if}\qquad
\begin{cases}
\etendu{h}(y) = 0 \\
\etendu{h}\lie{Q}(y) = 0 \\
\etendu{\sigma}(y) = x
\end{cases}
\quad\text{if and only if}\qquad
\begin{cases}
\lie{\eta}(y) = 0 \\
\lie{\bracket{\eta}{Q}}(y) = 0 \\
\etendu{\sigma}(y) = x
\end{cases} \]
\end{proposition}
The derivation $\eta$ was defined in Equation~\eqref{derivationeta}.

\begin{proof}
Assume $\etendu{\perturbed{\tau}}(x)=y$.
From $\etendu{h}\etendu{\tau}=0$ and $\etendu{h}\etendu{h}=0$,
we get \[ \etendu{h}\etendu{\perturbed{\tau}}=
\etendu{h}\sum_{l=0}^\infty
(\etendu{h}\lie{\perturbation})^l\etendu{\tau}
=\etendu{h}\etendu{\tau}+\etendu{h}(\etendu{h}\lie{\perturbation})\sum_{l=0}^\infty
(\etendu{h}\lie{\perturbation})^l\etendu{\tau}
=0 .\]
It follows that
\[ \etendu{h}(y)=\etendu{h}\etendu{\perturbed{\tau}}(x)=0
\qquad\text{and}\qquad
\etendu{\sigma}(y)=\etendu{\sigma}\etendu{\perturbed{\tau}}(x)=x .\]
Furthermore, since $\etendu{\perturbed{\tau}}$ is a chain map,
we have
\[ \etendu{h}\lie{Q}(y)=
\etendu{h}\lie{Q}\etendu{\perturbed{\tau}}(x)=
\etendu{h}\etendu{\perturbed{\tau}}d_A^{\Bott}(x)=0 .\]

Conversely, assuming $\etendu{h}(y)=0$;
$\etendu{h}\lie{Q}(y)=0$; and $\etendu{\sigma}(y)=x$,
it follows from
\[ \etendu{\perturbed{\tau}}\etendu{\sigma}-\id=
\etendu{\perturbed{h}}\lie{Q}+\lie{Q}\etendu{\perturbed{h}} \]
that
\[ \etendu{\perturbed{\tau}}(x)-y=
\etendu{\perturbed{\tau}}\etendu{\sigma}(y)-y=
\etendu{\perturbed{h}}\lie{Q}(y)+\lie{Q}\etendu{\perturbed{h}}(y)=
\sum_{l=0}^\infty(\etendu{h}\lie{\perturbation})^l
\etendu{h}\lie{Q}(y)+\lie{Q}
\sum_{l=0}^\infty(\etendu{h}\lie{\perturbation})^l
\etendu{h}(y)=0 \] and we can conclude that
$\etendu{\perturbed{\tau}}(x)=y$.

Finally, it is not difficult to show that
$\ker(\etendu{h})=\ker(\lie{\eta})$.
It follows that
\[ \begin{cases} \etendu{h}(y)=0 \\
\etendu{h}\lie{Q}(y)=0 \end{cases}
\quad\text{if and only if}\qquad
\begin{cases} \lie{\eta}(y)=0 \\
\lie{\eta}\lie{Q}(y)=0 \end{cases}
\quad\text{if and only if}\qquad
\begin{cases} \lie{\eta}(y)=0 \\
\lie{\bracket{\eta}{Q}}(y)=0 \end{cases}
\qedhere \]
\end{proof}

It follows from the homotopy transfer theorem for $L_\infty$ algebras
\cite{MR3276839,MR1932522,MR1950958,arXiv:1705.02880,MR2361936,arXiv:1807.03086,MR3318161,MR3323983}
applied to the contraction in Proposition~\ref{Bagnolet}
that the dgla structure carried by
$\tot\big(\sections{\Lambda^\bullet L\dual}\otimes_R\verticalTpoly{\bullet}\big)$
induces an $L_\infty$ algebra structure on
$\tot\big(\wa\otimes_R\Tpoly{\bullet}\big)$.
Moreover, since the retraction $\etendu{\sigma}$ preserves
the wedge products
according to Proposition~\ref{Bagnolet},
we immediately obtain the following

\begin{proposition}\label{Bari}
Given a Lie pair $(L,A)$, each choice of a splitting $j:B\to L$ of the short
exact sequence of vector bundles $0\to A \to L \to B \to 0$
and of a torsion-free $L$-connection $\nabla$ on $B$ determines
\begin{enumerate}
\item an $L_\infty$ algebra structure on
$\tot\big(\wa\otimes_R\Tpoly{\bullet}\big)$
with the operator $d_A^{\Bott}$ as unary bracket
\item and a Gerstenhaber algebra structure on
$\hypercohomology^\bullet_{\CE}(A,\Tpoly{\bullet})$,
the cohomology of the total complex
\[ \Big(\tot\big(\wa
\otimes_R\Tpoly{\bullet}\big),d_A^{\Bott}\Big) ,\]
where the Lie bracket is induced by the binary bracket
of the $L_\infty$ algebra structure on
$\tot\big(\wa
\otimes_R\Tpoly{\bullet}\big)$
and the multiplication by the wedge product \eqref{eq:wedge01}.
\end{enumerate}
\end{proposition}

\begin{remark}
One can prove that the $L_\infty$ algebra structure
on $\tot\big(\wa\otimes_R\Tpoly{\bullet}\big)$
is compatible with the wedge product in the sense that
all $L_\infty$ multibrackets are multi-derivations
with respect to the wedge product.
In other words, in the terminology of~\cite{MR4091493},
$\tot\big(\wa\otimes_R\Tpoly{\bullet}\big)$
is a $(+1)$-shifted derived Poisson algebra.\footnote{In the context of $\ZZ_2$-grading,
$(+1)$-shifted derived Poisson algebras are also called homotopy Schouten algebras
by Khudaverdian--Voronov~\cite{MR2757715}.
Note that, $0$-shifted derived Poisson algebras
were studied by Oh--Park~\cite{MR2180451}
and Cattaneo--Felder~\cite{MR2304327},
who called them $P_\infty$ algebras.}
\end{remark}

\subsection{Dolgushev--Fedosov contraction and \texorpdfstring{$L_\infty$}{L∞} algebra structure on the space of polydifferential operators of a Lie pair}

Denote the space of polydifferential operators on
$L[1]\oplus B$ by $D_{\poly}^{\bullet}(L[1]\oplus B)$.
The Hochschild cohomology of the Fedosov dg manifold $(L[1]\oplus B,Q)$
is the cohomology of the cochain complex
$\big(D_{\poly}^{\bullet}(L[1]\oplus B),\gerstenhaber{Q+m}{\argument}\big)$.
The algebra of functions $C^\infty(L[1]\oplus B)$ is a module over its subalgebra
$\sections{\Lambda^\bullet L\dual}\isomorphism
\sections{\Lambda^\bullet L\dual\otimes S^0(B\dual)}$.
The subspace of
$D_{\poly}^{\bullet}(L[1]\oplus B)$ comprised of all
$\sections{\Lambda^\bullet L\dual}$-multilinear polydifferential operators
is easily identified to
$\tot\big(\sections{\Lambda^\bullet L\dual}\otimes_R\verticalDpoly{\bullet}\big)$,
the space of polydifferential operators on the Fedosov dg Lie algebroid $\cF$.
Since $\cF$ is a dg Lie subalgebroid of the tangent bundle $T_{\cM}\to\cM$
of the Fedosov dg manifold $(L[1]\oplus B,Q)$, it follows that
the subspace $\tot\big(\sections{\Lambda^\bullet L\dual}\otimes_R\verticalDpoly{\bullet}\big)$ of
$D_{\poly}^{\bullet}(L[1]\oplus B)$ is stable under the Hochschild coboundary
operator ${\gerstenhaber{Q+m}{\argument}}$ of the Fedosov dg manifold
$(L[1]\oplus B,Q)$.

We also have the following
\begin{lemma}
The subspace $\tot\big(\sections{\Lambda^\bullet L\dual}\otimes_R\verticalDpoly{\bullet}\big)$
of $D_{\poly}^{\bullet}(L[1]\oplus B)$ is stable under
$\gerstenhaber{\delta}{\argument}$.
\end{lemma}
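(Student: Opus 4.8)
The plan is to deduce the claim from two facts already recorded in this section: that $\sections{\Lambda^\bullet L\dual}\otimes_R\verticalDpoly{\bullet}$ coincides, as a subspace of $D_{\poly}^{\bullet}(L[1]\oplus B)$, with $\enveloping{\cF}^{\otimes\bullet+1}$, and that $\enveloping{\cF}$ is a dg Hopf subalgebroid of $D_{\poly}^{0}(L[1]\oplus B)$. Granting these, it suffices to exhibit $-\delta$ as an element of $\enveloping{\cF}$. Indeed, the expression for $\phi\star\psi$ entering the Gerstenhaber bracket~\eqref{hazmat} is assembled entirely from the iterated comultiplications $\Delta^v$ and the multiplication of $\enveloping{\cL}$, both of which preserve the Hopf subalgebroid $\enveloping{\cF}$; hence, with $\phi=-\delta\in\enveloping{\cF}$ and $\psi=D\in\enveloping{\cF}^{\otimes k+1}$, each of $(-\delta)\star D$ and $D\star(-\delta)$ --- and therefore $\gerstenhaber{-\delta}{D}$ --- lies in $\enveloping{\cF}^{\otimes k+1}=\sections{\Lambda^\bullet L\dual}\otimes_R\verticalDpoly{k}$.

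So the one thing to verify is that $\delta$, regarded as a derivation of $C^\infty(\cM)=\sections{\Lambda^\bullet L\dual\otimes\hat{S}B\dual}$, lies in the image of the embedding $\sections{\cF}\into\XX(\cM)$ recalled in Section~\ref{subsec:algebroid}. That embedding sends $(\lambda\otimes\chi^J)\otimes\partial_k$ to the derivation $\mu\otimes\chi^M\mapsto\lambda\wedge\mu\otimes M_k\,\chi^{J+M-e_k}$; specializing to $\lambda=q^\top(\chi_m)$, $J=0$, $k=m$, and summing over $m\in\{1,\dots,r\}$, one recovers precisely the derivation $\mu\otimes\chi^M\mapsto\sum_{m}\big(q^\top(\chi_m)\wedge\mu\big)\otimes M_m\,\chi^{M-e_m}$, which is $\delta$. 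Thus
\[
\delta=\sum_{m=1}^{r}\big(q^\top(\chi_m)\otimes 1\big)\otimes\partial_m\;\in\;\sections{\Lambda^1 L\dual}\otimes_R\sections{B}\;\subset\;\sections{\cF},
\]
and $-\delta$ then lies in $\enveloping{\cF}$ through the canonical map $\sections{\cF}\to\enveloping{\cF}$.

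The content of this last step --- and the only place where geometry enters --- is the remark that $\delta$ is a \emph{vertical} vector field on $B\to M$: it differentiates only along the fibres of $B$, with coefficients in $\sections{\Lambda^\bullet L\dual}$, and never along the $L[1]$-directions. (This is exactly what fails for $d_L^\nabla$, which is why, in the preceding lemma, it is the combined operator $\gerstenhaber{Q+m}{\argument}$, and not $\gerstenhaber{d_L^\nabla}{\argument}$ on its own, that preserves the subspace.) I do not expect any genuine obstacle beyond this short identification. Should one wish to avoid the enveloping-algebra formalism, one may instead compute $\gerstenhaber{-\delta}{\argument}=-\liederivative{\delta}$ directly on a generic element of $\sections{\Lambda^\bullet L\dual}\otimes_R\verticalDpoly{k}$: since $\delta$ annihilates $\sections{\Lambda^\bullet L\dual\otimes S^0 B\dual}$ and maps $\sections{\hat{S}B\dual}$ into $\sections{\Lambda^1 L\dual\otimes\hat{S}B\dual}$, every term produced by the Lie-derivative formula remains $\sections{\Lambda^\bullet L\dual}$-multilinear and vertical; the enveloping-algebra argument is preferable only in that it dispenses with sign bookkeeping.
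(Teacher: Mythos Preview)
Your argument is correct, and the paper itself states the lemma without proof; your approach---recognizing that $\delta\in\sections{\cF}\subset\enveloping{\cF}$ and invoking the Hopf subalgebroid property already recorded in Section~\ref{subsec:algebroid}---is exactly the reasoning the surrounding text sets up and leaves implicit.

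One small correction to your parenthetical aside: it is not true that $\gerstenhaber{d_L^\nabla}{\argument}$ fails to preserve the subspace. Indeed, $\liederivative{Q}$ preserves it (by the dg subalgebroid property), $\liederivative{-\delta}$ preserves it (by the present lemma), and $\liederivative{X^\nabla}$ preserves it (since $X^\nabla\in\sections{L^\vee\otimes\hat{S}^{\geqslant 2}B^\vee\otimes B}\subset\sections{\cF}$ by the same reasoning you gave for $\delta$); subtracting, $\liederivative{d_L^\nabla}$ must preserve it too, and this is consistent with Lemma~\ref{Namur}, which treats $\gerstenhaber{\varrho}{\argument}$ as an operator on the subspace. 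What is true is only the weaker statement that $d_L^\nabla$ itself is not a section of $\cF$, so the Hopf-subalgebroid argument you gave does not directly apply to it. This does not affect your proof of the lemma.
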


\begin{lemma}
The diagram
\[ \begin{tikzcd}
\sections{\Lambda^p L\dual}\otimes_R\verticalDpoly{v}
\arrow{r}{\gerstenhaber{\delta}{\argument}} &
\sections{\Lambda^{p+1} L\dual}\otimes_R\verticalDpoly{v} \\
\sections{\Lambda^p L\dual}\otimes_R\verticalDpoly{v-1}
\arrow{u}{(-1)^p \gerstenhaber{m}{\argument}}
\arrow{r}{\gerstenhaber{\delta}{\argument}} &
\sections{\Lambda^{p+1} L\dual}\otimes_R\verticalDpoly{v-1}
\arrow{u}{(-1)^{p+1} \gerstenhaber{m}{\argument}}
\end{tikzcd} \]
commutes.
\end{lemma}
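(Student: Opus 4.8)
The plan is to read the square as an instance of the graded Jacobi identity for the Gerstenhaber bracket, combined with the single observation that $\delta$ is a graded derivation of the structure sheaf.

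First I would pin down the four arrows. The two horizontal maps are the restrictions, to the ``vertical'' (i.e.\ $\sections{\Lambda^\bullet L\dual}$-multilinear) polydifferential operators, of the operator $\gerstenhaber{\delta}{\argument}$ acting on the full space $D_{\poly}^{\bullet}(L[1]\oplus B)$; here $\delta$ is a genuine first-order, degree $+1$ polydifferential operator on $C^\infty(L[1]\oplus B)$, merely not a vertical one. The two vertical maps are $(-1)^{p}\gerstenhaber{m}{\argument}$ and $(-1)^{p+1}\gerstenhaber{m}{\argument}$ as displayed, where $m\in D_{\poly}^{1}(L[1]\oplus B)$ is the graded commutative multiplication of $\cR=C^\infty(\cM)=\sections{\Lambda^\bullet L\dual\otimes\hat{S}B\dual}$ --- the same $m$ that occurs in the Hochschild differential $\gerstenhaber{Q+m}{\argument}$. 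By the two lemmas immediately preceding the statement, the subspace $\sections{\Lambda^\bullet L\dual}\otimes_R\verticalDpoly{\bullet}$ is stable under both $\gerstenhaber{\delta}{\argument}$ and $\gerstenhaber{m}{\argument}$, so the square is well posed, and it suffices to prove the corresponding identity inside the graded Lie algebra $\big(D_{\poly}^{\bullet}(L[1]\oplus B),\gerstenhaber{}{}\big)$ supplied by Proposition~\ref{pro:hongkong1} (applied to $\cL=T_{\cM}$) and then restrict.

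The crucial step is to show $\gerstenhaber{\delta}{m}=0$. For any first-order operator $D$, the bidifferential operator $\gerstenhaber{D}{m}$ is, up to sign, the Hochschild coboundary of $D$, whose value on a pair $(f,g)$ is $D(fg)-D(f)\,g-(-1)^{\abs{D}\abs{f}}f\,D(g)$; for $D=\delta$ this vanishes identically, precisely because $\delta$ is a derivation of degree $+1$ of the graded commutative algebra $\sections{\Lambda^\bullet L\dual\otimes\hat{S}B\dual}$. Since $\delta$ and $m$ are both odd, the graded Jacobi identity for $\gerstenhaber{}{}$ then gives, for every $\phi$,
\[ \gerstenhaber{\delta}{\gerstenhaber{m}{\phi}}=\gerstenhaber{\gerstenhaber{\delta}{m}}{\phi}-\gerstenhaber{m}{\gerstenhaber{\delta}{\phi}}=-\gerstenhaber{m}{\gerstenhaber{\delta}{\phi}}, \]
that is, $\gerstenhaber{\delta}{\argument}$ and $\gerstenhaber{m}{\argument}$ anticommute, and this relation descends to the vertical subcomplex. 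Restoring the Koszul signs of the diagram, the ``up then right'' composite out of $\sections{\Lambda^{p}L\dual}\otimes_R\verticalDpoly{v-1}$ is $(-1)^{p}\gerstenhaber{\delta}{\gerstenhaber{m}{\argument}}$ while the ``right then up'' composite is $(-1)^{p+1}\gerstenhaber{m}{\gerstenhaber{\delta}{\argument}}$ --- the exponent $p+1$ arising because $\gerstenhaber{\delta}{\argument}$ has already raised the form degree --- and the anticommutation relation makes the two agree, which is exactly the asserted commutativity.

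I expect the only real obstacle to be the sign bookkeeping: keeping straight the parities of $\delta$ and $m$ in the total grading of $D_{\poly}^{\bullet}(L[1]\oplus B)$, the Koszul signs prescribed by~\eqref{eq:Rome} that relate the displayed vertical maps to the Hochschild coboundary operator, and the (standard, but worth re-deriving in this context) identification of that coboundary operator with $\gerstenhaber{m}{\argument}$. Everything past that is formal manipulation of the Gerstenhaber bracket.
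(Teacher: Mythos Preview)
Your argument is correct and genuinely different from the paper's. The paper proves this lemma by unwinding the identification $\varphi$ of~\eqref{varphi}: it checks that under $\id\otimes\varphi$ the horizontal map $\gerstenhaber{\delta}{\argument}$ becomes $\delta\otimes\id$ and the vertical map $\gerstenhaber{m}{\argument}$ becomes $\id\otimes(-1)^{v-1}\hochschild$, after which the square commutes trivially because the two operators act on disjoint tensor factors. Your route is instead purely structural: you observe that $\gerstenhaber{\delta}{m}=0$ because $\delta$ is a derivation of the algebra $\cR$, and then the graded Jacobi identity (with $\delta$ and $m$ both of total degree $+1$) forces $\gerstenhaber{\delta}{\argument}$ and $\gerstenhaber{m}{\argument}$ to anticommute, which is exactly the content of the square once the Koszul signs are put back in. This is in fact precisely the argument the paper itself uses for the companion Lemma~\ref{Namur} (with $\varrho$ in place of $\delta$), so you have recognised that both squares are instances of the same phenomenon. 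The paper's concrete approach has the side benefit of recording the explicit form of both operators under $\varphi$, which is used again in setting up the contraction of Proposition~\ref{Damas}; your approach is cleaner and makes the lemma a one-liner, at the cost of not producing those formulas.
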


\begin{proof}
It suffices to verify that the diagrams
\[ \begin{tikzcd}[column sep=huge]
\sections{\Lambda^p L\dual}\otimes_R\verticalDpoly{v}
\arrow{r}{\gerstenhaber{\delta}{\argument}} &
\sections{\Lambda^{p+1} L\dual}\otimes_R\verticalDpoly{v} \\
\sections{\Lambda^p L\dual\otimes\hat{S}(B\dual)}\otimes_R\sections{(SB)^{\otimes v+1}}
\arrow{r}{\delta\otimes\id}
\arrow{u}{\id\otimes\varphi}[swap]{\isomorphism} &
\sections{\Lambda^{p+1} L\dual\otimes\hat{S}(B\dual)}\otimes_R\sections{(SB)^{\otimes v+1}}
\arrow{u}{\id\otimes\varphi}[swap]{\isomorphism}
\end{tikzcd} \]
and
\[ \begin{tikzcd}[column sep=huge]
\sections{\Lambda^p L\dual}\otimes_R\verticalDpoly{v-1}
\arrow{r}{\gerstenhaber{m}{\argument}} &
\sections{\Lambda^{p} L\dual}\otimes_R\verticalDpoly{v} \\
\sections{\Lambda^p L\dual\otimes\hat{S}(B\dual)}\otimes_R\sections{(SB)^{\otimes v}}
\arrow{r}{\id\otimes(-1)^{v-1}\hochschild}
\arrow{u}{\id\otimes\varphi}[swap]{\isomorphism} &
\sections{\Lambda^p L\dual\otimes\hat{S}(B\dual)}\otimes_R\sections{(SB)^{\otimes v+1}}
\arrow{u}{\id\otimes\varphi}[swap]{\isomorphism}
\end{tikzcd} \]
commute.
\end{proof}

\begin{proposition}\label{Liege}
The diagram
\[ \begin{tikzcd}[column sep=large,row sep=small]
\vdots & \vdots & \vdots & \\
\sections{\Lambda^0 L\dual}\otimes_R\verticalDpoly{1}
\arrow{u}{\gerstenhaber{m}{\argument}} \arrow{r}{\gerstenhaber{-\delta}{\argument}} &
\sections{\Lambda^1 L\dual}\otimes_R\verticalDpoly{1}
\arrow{u}{-\gerstenhaber{m}{\argument}} \arrow{r}{\gerstenhaber{-\delta}{\argument}} &
\sections{\Lambda^2 L\dual}\otimes_R\verticalDpoly{1}
\arrow{u}{\gerstenhaber{m}{\argument}} \arrow{r}{\gerstenhaber{-\delta}{\argument}} &
\cdots \\
\sections{\Lambda^0 L\dual}\otimes_R\verticalDpoly{0}
\arrow{u}{\gerstenhaber{m}{\argument}} \arrow{r}{\gerstenhaber{-\delta}{\argument}} &
\sections{\Lambda^1 L\dual}\otimes_R\verticalDpoly{0}
\arrow{u}{-\gerstenhaber{m}{\argument}} \arrow{r}{\gerstenhaber{-\delta}{\argument}} &
\sections{\Lambda^2 L\dual}\otimes_R\verticalDpoly{0}
\arrow{u}{\gerstenhaber{m}{\argument}} \arrow{r}{\gerstenhaber{-\delta}{\argument}} &
\cdots \\
\sections{\Lambda^0 L\dual}\otimes_R\verticalDpoly{-1}
\arrow{u}{\gerstenhaber{m}{\argument}} \arrow{r}{\gerstenhaber{-\delta}{\argument}} &
\sections{\Lambda^1 L\dual}\otimes_R\verticalDpoly{-1}
\arrow{u}{-\gerstenhaber{m}{\argument}} \arrow{r}{\gerstenhaber{-\delta}{\argument}} &
\sections{\Lambda^2 L\dual}\otimes_R\verticalDpoly{-1}
\arrow{u}{\gerstenhaber{m}{\argument}} \arrow{r}{\gerstenhaber{-\delta}{\argument}} &
\cdots
\end{tikzcd} \]
is a double complex.
\end{proposition}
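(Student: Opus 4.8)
By the definition recalled in Section~1, to prove that the displayed grid is a double complex I must verify three things: that each row is a cochain complex, that each column is a cochain complex, and that each square commutes. The third point has essentially already been settled. Indeed, it is exactly the content of the lemma established just above, whose commuting squares carry precisely the signs $(-1)^p$ occurring in the statement of Proposition~\ref{Liege}; replacing the horizontal arrows $\gerstenhaber{\delta}{\argument}$ appearing there by $\gerstenhaber{-\delta}{\argument}$ multiplies both composites around each square by $-1$ and hence does not disturb commutativity. Moreover, the preceding two lemmas guarantee that the subspace $\sections{\Lambda^\bullet L\dual}\otimes_R\verticalDpoly{\bullet}$ of $D_{\poly}^\bullet(L[1]\oplus B)$ is stable under both $\gerstenhaber{-\delta}{\argument}$ and $\gerstenhaber{m}{\argument}$, so that every arrow in the grid is well defined.

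It therefore remains to see that the horizontal and vertical arrows are differentials. For the rows, $\big(\gerstenhaber{-\delta}{\argument}\big)^2=\gerstenhaber{\delta}{\gerstenhaber{\delta}{\argument}}$, and by the graded Jacobi identity for the Gerstenhaber bracket on $D_{\poly}^\bullet(L[1]\oplus B)$ this operator is (a nonzero scalar multiple of) $\gerstenhaber{\gerstenhaber{\delta}{\delta}}{\argument}$. Since $\delta$ is a degree $(+1)$ derivation of $\sections{\Lambda^\bullet L\dual\otimes\hat{S}B\dual}$ with $\delta^2=0$ (see the construction recalled before Theorem~\ref{strawberry}), one has $\gerstenhaber{\delta}{\delta}=2\delta^2=0$, whence $\big(\gerstenhaber{-\delta}{\argument}\big)^2=0$ and each row is a cochain complex. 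For the columns, within a fixed column the vertical arrow is $(-1)^p\gerstenhaber{m}{\argument}$ with one and the same sign at every step, so its square equals $\big(\gerstenhaber{m}{\argument}\big)^2$, which by graded Jacobi is proportional to $\gerstenhaber{\gerstenhaber{m}{m}}{\argument}$; since $m$ is the commutative associative fibrewise multiplication on $\hat{S}B\dual$, one has $\gerstenhaber{m}{m}=0$. Equivalently, under the identification $\varphi$ of~\eqref{varphi} the operator $\gerstenhaber{m}{\argument}$ is, up to sign, the Hochschild coboundary attached to the coalgebra $SB$, which squares to zero by coassociativity, exactly as for $\Dpoly{\bullet}$. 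Hence each column is a cochain complex as well, and the grid is a double complex.

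One may also package this more compactly: the three identities $\gerstenhaber{\delta}{\delta}=0$, $\gerstenhaber{\delta}{m}=0$ (the second expressing that $\delta$ is a derivation of the algebra $\sections{\Lambda^\bullet L\dual\otimes\hat{S}B\dual}$), and $\gerstenhaber{m}{m}=0$ say precisely that $-\delta+m$ is a degree $(+1)$ square-zero element of the Gerstenhaber algebra, decomposed into its component $-\delta$ of bidegree $(+1,0)$ and its component $m$ of bidegree $(0,+1)$ (the first index counting the $\Lambda^\bullet L\dual$-degree, the second the polydifferential degree). A square-zero operator that is the sum of a $(+1,0)$-operator and a $(0,+1)$-operator is, after insertion of the standard signs $(-1)^p$, precisely the total differential of a double complex; restricting to the $\gerstenhaber{-\delta}{\argument}$- and $\gerstenhaber{m}{\argument}$-stable subspace $\sections{\Lambda^\bullet L\dual}\otimes_R\verticalDpoly{\bullet}$ then yields the claim. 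I expect the only genuinely delicate point to be the sign bookkeeping: one must ensure that the conventions in the definitions of the Gerstenhaber bracket, of $\hochschild$, and of $\varphi$ conspire so that the squares genuinely commute rather than anticommute — which is exactly what dictates the alternating signs $(-1)^p$ on the vertical arrows, and which is already verified in the lemma preceding the statement. Everything else is formal.
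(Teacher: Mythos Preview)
Your proposal is correct and follows essentially the same route as the paper. In fact the paper gives no separate proof of Proposition~\ref{Liege} at all: it is stated as an immediate consequence of the two preceding lemmas (stability of the subspace and commutativity of the squares), with the facts that the rows and columns are complexes left implicit. You have simply made those implicit verifications explicit, via the graded Jacobi identity together with $\delta^2=0$ and $\gerstenhaber{m}{m}=0$, which is exactly the intended argument.
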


Its total complex
\[ \cdots\to\tot^{n}\Big(\sections{\Lambda^{\bullet} L\dual}
\otimes_R\verticalDpoly{\bullet}\Big) \xto{\gerstenhaber{-\delta+m}{\argument}}
\tot^{n+1}\Big(\sections{\Lambda^{\bullet} L\dual}
\otimes_R\verticalDpoly{\bullet}\Big)\to\cdots \]
admits the descending filtration
$\mathscr{F}_{0}\supset\mathscr{F}_{1}\supset
\mathscr{F}_{2}\supset\mathscr{F}_{3}\supset\cdots$
defined by
\[ \mathscr{F}_m=\bigoplus_{k=0}^{\rk(L)}
\sections{\Lambda^k(L\dual)}\otimes_R\varphi
\Big(\bigoplus_{q=-1}^{\infty}
\sections{\hat{S}^{\geqslant m-k}(B\dual)\otimes
\underset{q+1\text{ factors}}{\underbrace{S(B)\otimes\cdots\otimes S(B)}}}\Big) .\]
Here $\varphi$ is as in~\eqref{eq:varphi}.

We shall denote by $\etendu{\tau}$, $\etendu{\sigma}$ and $\etendu{h}$
the maps defined by the following commutative diagrams
(where $\tau$, $\sigma$, and $h$ are the maps introduced
in Section~\ref{Fedosov_dg_abd}
and $\varphi$ is the identification \eqref{eq:varphi}):

\begin{equation}\label{AAA} \begin{tikzcd}[row sep=tiny]
\sections{\Lambda^{u}L\dual}\otimes_R\verticalDpoly{v}
\arrow{dr}{\etendu{\sigma}} & \\
& \sections{\Lambda^{u}A\dual}\otimes_R\Dpoly{v} \\
\sections{\Lambda^{u}L\dual\otimes\hat{S}(B\dual)}
\otimes_R\sections{(SB)^{\otimes v+1}}
\arrow{uu}{\id\otimes\varphi}[swap]{\isomorphism}
\arrow{ur}[swap]{\sigma\otimes\pbw^{\otimes v+1}} &
\end{tikzcd} \end{equation}

\begin{equation}\label{BBB} \begin{tikzcd}[row sep=tiny]
& \sections{\Lambda^{u}L\dual}\otimes_R\verticalDpoly{v} \\
\sections{\Lambda^{u}A\dual}\otimes_R\Dpoly{v}
\arrow{ur}{\etendu{\tau}}
\arrow{dr}[swap]{\tau\otimes(\pbw\inv)^{\otimes v+1}} & \\
& \sections{\Lambda^{u}L\dual\otimes\hat{S}(B\dual)}
\otimes_R\sections{(SB)^{\otimes v+1}}
\arrow{uu}{\id\otimes\varphi}[swap]{\isomorphism}
\end{tikzcd} \end{equation}

\begin{equation}\label{CCC} \begin{tikzcd}
\sections{\Lambda^{u}L\dual}\otimes_R\verticalDpoly{v}
\arrow{r}{\etendu{h}}
&
\sections{\Lambda^{u-1}L\dual}\otimes_R\verticalDpoly{v}
\\
\sections{\Lambda^{u}L\dual\otimes\hat{S}(B\dual)}
\otimes_R \sections{(SB)^{\otimes v+1}}
\arrow{r}[swap]{h\otimes\id}
\arrow{u}{\id\otimes\varphi}[swap]{\isomorphism}
&
\sections{\Lambda^{u-1}L\dual\otimes\hat{S}(B\dual)}
\otimes_R \sections{(SB)^{\otimes v+1}}
\arrow{u}{\id\otimes\varphi}[swap]{\isomorphism}
\end{tikzcd} \end{equation}

The following proposition can be easily verified.

\begin{proposition}\label{Damas}
The cochain complex
$\big(\tot\big( \sections{\Lambda^{\bullet}L\dual}
\otimes_R\verticalDpoly{\bullet}\big),
\gerstenhaber{-\delta+m}{\argument}\big)$
contracts onto
$\big(\tot\big(\sections{\Lambda^{\bullet}A\dual}
\otimes_R\Dpoly{\bullet}\big),\id\otimes\hochschild\big)$.
More precisely, we have a filtered contraction
\begin{dontbotheriftheequationisoverlong} \begin{tikzcd}[column sep=huge]
\cdots \arrow{r} &
\tot^{n}\Big(\sections{\Lambda^{\bullet} L\dual }\otimes_R\verticalDpoly{\bullet}\Big)
\arrow{d}{\etendu{\sigma}}
\arrow{r}{\gerstenhaber{-\delta+m}{\argument}} &
\tot^{n+1}\Big(\sections{\Lambda^{\bullet} L\dual }\otimes_R\verticalDpoly{\bullet}\Big)
\arrow{d}{\etendu{\sigma}} \arrow{r}
\arrow[dashed]{ddl}[near end]{\etendu{h}} & \cdots \\
\cdots \arrow{r} &
\tot^{n}\Big(\sections{\Lambda^{\bullet} A\dual }\otimes_R\Dpoly{\bullet}\Big)
\arrow{d}[swap]{\etendu{\tau}} \arrow{r}[near start]{\id\otimes\hochschild} &
\tot^{n+1}\Big(\sections{\Lambda^{\bullet} A\dual }\otimes_R\Dpoly{\bullet}\Big)
\arrow{d}[swap]{\etendu{\tau}} \arrow{r} & \cdots \\
\cdots \arrow{r} &
\tot^{n}\Big(\sections{\Lambda^{\bullet} L\dual }\otimes_R\verticalDpoly{\bullet}\Big)
\arrow{r}{\gerstenhaber{-\delta+m}{\argument}} &
\tot^{n+1}\Big(\sections{\Lambda^{\bullet} L\dual }\otimes_R\verticalDpoly{\bullet}\Big)
\arrow{r} & \cdots
\end{tikzcd} \end{dontbotheriftheequationisoverlong}
where
$\etendu{\sigma}$, $\etendu{\tau}$ and $\etendu{h}$ are
the maps defined by the commutative diagrams
\eqref{AAA}, \eqref{BBB}, and \eqref{CCC}.
\end{proposition}
\begin{lemma}\label{lem:polydsemifull}
The contraction $(\tau_\natural,\sigma_\natural,h_\natural)$
in Proposition~\ref{Damas}
is a semifull algebra contraction
(where the associative product on both sides is the cup product).
Moreover, the maps $\etendu{\tau}$ and $\etendu{\sigma}$
respect the cup products.
\end{lemma}

\begin{proof}
This follows easily from the definitions
and the corresponding statements for $(\tau,\sigma,h)$
--- see Lemma~\ref{lem:semifull}.
\end{proof}

\begin{remark}
For future reference, we point out that the same maps
$(\etendu{\tau},\etendu{\sigma},\etendu{h})$
also define a filtered contraction of
$\big(\tot\big(\sections{\Lambda^{\bullet}L\dual}
\otimes_R\verticalDpoly{\bullet}\big),
\gerstenhaber{-\delta}{\argument}\big)$
onto
$\big(\tot\big(\sections{\Lambda^{\bullet}A\dual}
\otimes_R\Dpoly{\bullet}\big),0)$.
As for Proposition~\ref{Damas}, we leave the verification
of this claim as an easy exercise for the reader.
\end{remark}

\begin{lemma}\label{Namur}
The diagram
\[ \begin{tikzcd}
\sections{\Lambda^p L\dual}\otimes_R\verticalDpoly{v+1}
\arrow{r}{\gerstenhaber{\perturbation}{\argument}} &
\sections{\Lambda^{p+1} L\dual}\otimes_R\verticalDpoly{v+1} \\
\sections{\Lambda^p L\dual}\otimes_R\verticalDpoly{v}
\arrow{u}{(-1)^{p}\gerstenhaber{m}{\argument}}
\arrow{r}{\gerstenhaber{\perturbation}{\argument}} &
\sections{\Lambda^{p+1} L\dual}\otimes_R\verticalDpoly{v}
\arrow{u}{(-1)^{p+1}\gerstenhaber{m}{\argument}}
\end{tikzcd} \]
commutes.
\end{lemma}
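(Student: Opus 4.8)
The plan is to reduce the claimed commutativity to an identity purely about the Gerstenhaber bracket on the polydifferential operators of the Fedosov dg Lie algebroid $\cF\to\cM$, where it follows from general nonsense about dg Lie algebroids. Recall from Proposition~\ref{pro:hongkong1} that $\big(\enveloping{\cF}^{\otimes\bullet+1},\mathcal{Q}+\hochschild,\gerstenhaber{\argument}{\argument}\big)$ is a dgla, and that under the identification $\enveloping{\cF}^{\otimes\bullet+1}\cong\tot\sections{\Lambda^\bullet L\dual}\otimes_R\verticalDpoly{\bullet}$ the full differential is $\gerstenhaber{Q+m}{\argument}=\gerstenhaber{-\delta+\varrho+m}{\argument}$. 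The operator $\gerstenhaber{m}{\argument}$ is, up to the sign $(-1)^v$ recorded in the proof of Proposition~\ref{Liege}, exactly $\id\otimes\hochschild$, i.e.\ the Hochschild coboundary raising the tensor-power degree $v$. So the content of the lemma is that the two pieces $\gerstenhaber{\varrho}{\argument}$ (raising the $\Lambda^\bullet L\dual$-degree $p$) and $\gerstenhaber{m}{\argument}$ (raising $v$) of the total differential graded-commute with each other; the signs $(-1)^p$ versus $(-1)^{p+1}$ on the two vertical arrows are precisely the Koszul signs that turn "graded-commute" into "the square commutes as drawn."

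First I would note that $\gerstenhaber{Q+m}{\argument}$ squares to zero (Proposition~\ref{Bujumbura}, or directly from the dgla axioms since $Q+m$ is Maurer--Cartan), and expand $0=\gerstenhaber{Q+m}{\gerstenhaber{Q+m}{\argument}}$ using the graded Jacobi identity. Writing $Q=-\delta+\varrho$ and grouping terms by bidegree $(p,v)$: the $\delta$--$\delta$ term vanishes since $\delta^2=0$ (hence $\gerstenhaber{\delta}{\gerstenhaber{\delta}{\argument}}=\tfrac12\gerstenhaber{\gerstenhaber{\delta}{\delta}}{\argument}=0$, as $\delta$ is a homological vector field); the $m$--$m$ term vanishes since $\hochschild^2=0$; the $\delta$--$m$ cross term is handled by the previous two lemmas of this subsection (the one stating that $\sections{\Lambda^\bullet L\dual}\otimes_R\verticalDpoly{\bullet}$ is stable under $\gerstenhaber{-\delta}{\argument}$ together with the one giving the commuting square between $\gerstenhaber{\delta}{\argument}$ and $\gerstenhaber{m}{\argument}$); the $\delta$--$\varrho$ and $\varrho$--$\varrho$ terms are covered by the fact that $Q=-\delta+\varrho$ is itself homological, so $\gerstenhaber{Q}{\gerstenhaber{Q}{\argument}}=0$, and subtracting the $\delta$--$\delta$ part isolates $\gerstenhaber{\varrho}{\gerstenhaber{\varrho}{\argument}}+\{\delta,\varrho\}$-type contributions. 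What remains, after this accounting, is exactly the graded-commutator of $\gerstenhaber{\varrho}{\argument}$ with $\gerstenhaber{m}{\argument}$, which must therefore vanish; translating the vanishing graded-commutator into the diagram with its stated signs (using $\verticalDpoly{v}$ sits in total degree including a $v$-shift, so the Koszul sign picked up commuting $\gerstenhaber{\varrho}{\argument}$ past $\gerstenhaber{m}{\argument}$ is $(-1)^{?}$ and must be compensated by the $(-1)^p$ vs $(-1)^{p+1}$) gives the claim.

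Alternatively — and this is probably the cleaner route to write out — one checks the identity directly at the level of $\verticalDpoly{\bullet}$ using the explicit formulas \eqref{hola}, \eqref{hazmat} for $\hochschild$ and $\gerstenhaber{\argument}{\argument}$: since $\varrho=d_L^\nabla+X^\nabla$ acts as a derivation of the algebra $\sections{\hat{S}B\dual}$ for each $l\in\sections{L}$, the operator $\gerstenhaber{\varrho}{\argument}$ acts on a polydifferential operator $D\in\verticalDpoly{v}$ "by Leibniz" on each of its $v+1$ slots and on its leading $\hat{S}B\dual$-coefficient, while $\hochschild$ merely inserts a unit or applies the coproduct $\Delta$ in one slot; because $\varrho$ is a coderivation-compatible derivation (it is induced by the $L$-connection $\cn$ which is a coderivation of $\sections{SB}$, as recalled in Section~\ref{Fedosov dg abd}), it commutes with $\Delta$ and with insertion of $1$ up to the evident signs, giving the square.

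The main obstacle I anticipate is purely bookkeeping: pinning down the correct Koszul signs, since $\verticalDpoly{v}$ carries the cohomological degree coming both from the $\Lambda^p L\dual$ factor and from the tensor-power $v$ (an element of $\verticalDpoly{v}$ has intrinsic degree $v$ as a $(v+1)$-ary operation), and both $\gerstenhaber{\varrho}{\argument}$ and $\gerstenhaber{m}{\argument}$ have total degree $+1$. One has to verify that commuting $\gerstenhaber{m}{\argument}$ (which changes $v$ by $1$) past $\gerstenhaber{\varrho}{\argument}$ produces exactly the sign $(-1)$ that is absorbed by passing from the $(-1)^p$-labelled bottom vertical map to the $(-1)^{p+1}$-labelled top one — this is the same sign subtlety already visible in \eqref{eq:Rome} and in the sign $(-1)^{v-1}\hochschild$ appearing in the proof of Proposition~\ref{Liege}, so the computation there can be reused essentially verbatim. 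Once the signs are fixed, the identity itself is a formal consequence of $(\mathcal{Q}+\hochschild)^2=0$ on $\enveloping{\cF}^{\otimes\bullet+1}$.
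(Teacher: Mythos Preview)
Your second approach is essentially the paper's argument: the paper's sketch of proof is the single observation that $\gerstenhaber{\varrho}{m}=0$ because, for every $l\in\sections{L}$, the contraction $i_l\varrho$ is a derivation of the multiplication $m$ on $C^\infty(L[1]\oplus B)$; the graded Jacobi identity for $\gerstenhaber{\argument}{\argument}$ then immediately gives $\gerstenhaber{\varrho}{\gerstenhaber{m}{x}}+\gerstenhaber{m}{\gerstenhaber{\varrho}{x}}=0$, which is the commutativity of the square with the indicated signs. Your phrasing in terms of $\varrho$ commuting with $\Delta$ and with the insertion of units is the same fact read on the Hochschild side.

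Your first approach is also valid but takes a longer detour. You invoke Proposition~\ref{Bujumbura} (itself an instance of Proposition~\ref{pro:hongkong1} for the Fedosov dg Lie algebroid $\cF$), expand $\gerstenhaber{Q+m}{\gerstenhaber{Q+m}{\argument}}=0$, kill the $Q$--$Q$ part using $Q^2=0$, kill the $m$--$m$ part using associativity, kill the $\delta$--$m$ cross term using the previous lemma in this subsection, and conclude that the $\varrho$--$m$ cross term vanishes. This works because the $\delta$--$m$ and $\varrho$--$m$ cross terms, while of the same bidegree, can be separated once the former is known to vanish. The advantage of this route is that it makes no direct computation with $\varrho$; the cost is that it relies on the dgla structure of $\enveloping{\cF}^{\otimes\bullet+1}$ as a black box, whereas the paper's one-line argument isolates the elementary reason (the derivation property of $i_l\varrho$) and would go through even without knowing Proposition~\ref{Bujumbura} in advance. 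Your worry about sign bookkeeping is legitimate but not a genuine obstacle: the passage from $\gerstenhaber{\varrho}{\gerstenhaber{m}{\argument}}=-\gerstenhaber{m}{\gerstenhaber{\varrho}{\argument}}$ to the drawn square is exactly the same translation already carried out for $\delta$ in place of $\varrho$ in the preceding lemma.
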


\begin{proof}[Sketch of proof]
We have $\gerstenhaber{\perturbation}{m}=0$ because,
for every $l\in\sections{L}$, the operator $\interior{l}\perturbation$
is a derivation for the multiplication $m$ on $C^\infty(L[1]\oplus B)$.
\end{proof}

It follows from Proposition~\ref{Liege} and Lemma~\ref{Namur} that
\[ \begin{tikzcd}[column sep=large,row sep=small]
\vdots & \vdots & \vdots & \\
\sections{\Lambda^0 L\dual}\otimes_R\verticalDpoly{1}
\arrow{u}{\gerstenhaber{m}{\argument}} \arrow{r}{\gerstenhaber{-\delta+\perturbation}{\argument}} &
\sections{\Lambda^1 L\dual}\otimes_R\verticalDpoly{1} \arrow{u}{-\gerstenhaber{m}{\argument}}
\arrow{r}{\gerstenhaber{-\delta+\perturbation}{\argument}} &
\sections{\Lambda^2 L\dual}\otimes_R\verticalDpoly{1} \arrow{u}{\gerstenhaber{m}{\argument}}
\arrow{r}{\gerstenhaber{-\delta+\perturbation}{\argument}} & \cdots \\
\sections{\Lambda^0 L\dual}\otimes_R\verticalDpoly{0} \arrow{u}{\gerstenhaber{m}{\argument}}
\arrow{r}{\gerstenhaber{-\delta+\perturbation}{\argument}} &
\sections{\Lambda^1 L\dual}\otimes_R\verticalDpoly{0} \arrow{u}{-\gerstenhaber{m}{\argument}}
\arrow{r}{\gerstenhaber{-\delta+\perturbation}{\argument}} &
\sections{\Lambda^2 L\dual}\otimes_R\verticalDpoly{0} \arrow{u}{\gerstenhaber{m}{\argument}}
\arrow{r}{\gerstenhaber{-\delta+\perturbation}{\argument}} & \cdots \\
\sections{\Lambda^0 L\dual}\otimes_R\verticalDpoly{-1} \arrow{u}{\gerstenhaber{m}{\argument}}
\arrow{r}{\gerstenhaber{-\delta+\perturbation}{\argument}} &
\sections{\Lambda^1 L\dual}\otimes_R\verticalDpoly{-1} \arrow{u}{-\gerstenhaber{m}{\argument}}
\arrow{r}{\gerstenhaber{-\delta+\perturbation}{\argument}} &
\sections{\Lambda^2 L\dual}\otimes_R\verticalDpoly{-1} \arrow{u}{\gerstenhaber{m}{\argument}}
\arrow{r}{\gerstenhaber{-\delta+\perturbation}{\argument}} & \cdots
\end{tikzcd} \]
is a double complex.

Indeed, the operator $\gerstenhaber{\perturbation}{\argument}$
is a perturbation of the filtered complex
\[ \cdots\to\tot^{n}\Big(\sections{\Lambda^{\bullet} L\dual}\otimes_R\verticalDpoly{\bullet}\Big)
\xto{\gerstenhaber{-\delta+m}{\argument}} \tot^{n+1}\Big(\sections{\Lambda^{\bullet} L\dual}
\otimes_R\verticalDpoly{\bullet}\Big)\to\cdots \]

\begin{proposition}\label{Dhaka}
There exists a contraction
\begin{equation}\label{eq:Dhaka}
\begin{tikzcd}[cramped]
\Big(\tot\big(\wa\otimes_R\Dpoly{\bullet}\big),\dhoch \Big)
\arrow[r, shift left, "\etendu{\perturbed{\tau}}"] &
\Big(\tot\big(\sections{\Lambda^\bullet L\dual}\otimes_R
\verticalDpoly{\bullet}\big),\gerstenhaber{Q+ m}{\argument}\Big)
\arrow[l, shift left, "\etendu{\sigma}"]
\arrow[loop, "\etendu{\perturbed{h}}", out=5,in=-5,looseness = 3]
\end{tikzcd}
\end{equation}
--- recall that $\mathfrak{d}_{\mathscr{H}}:=\id\otimes\hochschild$.
More precisely, we have the (filtered) contraction
\begin{dontbotheriftheequationisoverlong} \begin{tikzcd}[column sep=large]
\cdots \arrow{r} &
\tot^{n}\Big(\sections{\Lambda^{\bullet}(L\dual)}\otimes_R\verticalDpoly{\bullet}\Big)
\arrow{d}{\etendu{\sigma}}
\arrow{r}{\gerstenhaber{Q+m}{\argument}} &
\tot^{n+1}\Big(\sections{\Lambda^{\bullet}(L\dual)}\otimes_R\verticalDpoly{\bullet}\Big)
\arrow{d}{\etendu{\sigma}} \arrow{r}
\arrow[dashed]{ddl}[near end]{\etendu{\perturbed{h}}} & \cdots \\
\cdots \arrow{r} &
\tot^{n}\Big(\sections{\Lambda^{\bullet}(A\dual)}\otimes_R\Dpoly{\bullet}\Big)
\arrow{d}[swap]{\etendu{\perturbed{\tau}}} \arrow{r}[near start]{\dhoch} &
\tot^{n+1}\Big(\sections{\Lambda^{\bullet}(A\dual)}\otimes_R\Dpoly{\bullet}\Big)
\arrow{d}[swap]{\etendu{\perturbed{\tau}}} \arrow{r} & \cdots \\
\cdots \arrow{r} &
\tot^{n}\Big(\sections{\Lambda^{\bullet}(L\dual)}\otimes_R\verticalDpoly{\bullet}\Big)
\arrow{r}{\gerstenhaber{Q+m}{\argument}} &
\tot^{n+1}\Big(\sections{\Lambda^{\bullet}(L\dual)}\otimes_R\verticalDpoly{\bullet}\Big)
\arrow{r} & \cdots
\end{tikzcd} \end{dontbotheriftheequationisoverlong}
where $\etendu{\perturbed{\tau}}=\sum_{l=0}^\infty
(\etendu{h}\circ\gerstenhaber{\perturbation}{\argument})^l\circ\etendu{\tau}$
and $\etendu{\perturbed{h}}=\sum_{l=0}^\infty
(\etendu{h}\circ\gerstenhaber{\perturbation}{\argument})^l\circ\etendu{h}$.

Moreover, the cochain maps $\etendu{\perturbed{\tau}}$ and $\etendu{\sigma}$ respect the cup products
on both sides.
\end{proposition}

As an immediate consequence of Proposition~\ref{Dhaka},
by considering the bigradings on both sides of~\eqref{eq:Dhaka},
we obtain the following

\begin{corollary}\label{cor:Dhaka}
For every $k\geqslant -1$, we have a contraction
\begin{equation}\label{eq:Dhaka1}
\begin{tikzcd}[cramped]
\Big(\wa\otimes_R\Dpoly{k},\dau\Big)
\arrow[r, shift left, "\etendu{\perturbed{\tau}}"] &
\Big(\sections{\Lambda^\bullet L\dual}\otimes_R\verticalDpoly{k},
\gerstenhaber{Q}{\argument}\Big)
\arrow[l, shift left, "\etendu{\sigma}"]
\arrow[loop, "\etendu{\perturbed{h}}", out=5,in=-5,looseness = 3]
\end{tikzcd}
\end{equation}
\end{corollary}

The case $k=-1$ was established in~\cite[Proposition~5.4]{MR4150934}.

The proof of Proposition~\ref{Dhaka} requires the following technical results.

\begin{lemma}\label{Vincennes}
Let $\po$ denote the canonical projection
$\hat{S}(B\dual)\otimes S(B)\onto
S^0(B\dual)\otimes S(B)$.
For all $a\in\sections{A}$ and $J\in\NO^r$, we have
\[ \po\big(\gerstenhaber{\cn_a}{\partial^J}\big)
=\cn_a(\partial^J) .\]
\end{lemma}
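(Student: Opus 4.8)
The plan is to run, for the monomial $\partial^J$, the same computation that proves Lemma~\ref{Cergy} for a single frame vector $\partial_j$. As in the proof of Lemma~\ref{Cergy}, for $a\in\sections{A}$ the covariant derivative $\cn_a$ stabilizes the filtration of $\sections{\hat{S}(B\dual)}$, so there are local $1$-forms $\theta_k^M\in\sections{A\dual}$ ($\abs{M}\geqslant 1$) with $\cn_a\chi_k=\sum_{\abs{M}\geqslant 1}i_a\theta_k^M\,\chi^M$; equivalently $\cn_a$ is the vertical vector field $\sum_{k=1}^{r}g_k\,\partial_k\in\verticalXpoly{0}\subset\verticalDpoly{0}$ with $g_k:=\sum_{\abs{M}\geqslant 1}i_a\theta_k^M\,\chi^M\in\sections{\hat{S}^{\geqslant 1}(B\dual)}$. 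Since $\cn_a$ and $\partial^J$ both lie in $\verticalDpoly{0}\subset\enveloping{\cF}$, Equation~\eqref{hazmat} with $u=v=0$ identifies $\gerstenhaber{\cn_a}{\partial^J}$ with the plain commutator $\cn_a\circ\partial^J-\partial^J\circ\cn_a$ of differential operators on $\sections{\hat{S}(B\dual)}$.

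First I would compute this commutator. One has $\cn_a\circ\partial^J=\sum_k g_k\,\partial^{J+e_k}$, while the Leibniz rule for the constant‑coefficient operator $\partial^J$ applied to $g_k\cdot\partial_k(-)$ gives $\partial^J\circ\cn_a=\sum_k\sum_{K\preceq J}\binom{J}{K}\,\partial^K(g_k)\,\partial^{J-K+e_k}$; the summand $K=0$ cancels $\cn_a\circ\partial^J$, so
\[ \gerstenhaber{\cn_a}{\partial^J}=-\sum_{k=1}^{r}\sum_{0\prec K\preceq J}\binom{J}{K}\,\partial^K(g_k)\,\partial^{J-K+e_k}. \]
Applying $\po$ and using that each $\partial^{J-K+e_k}$ has constant coefficients (so $\po$ commutes with composing on the right by it), the only surviving contribution of $g_k$ is the term $i_a\theta_k^K\,\chi^K$, for which $\partial^K(\chi^K)=1$ in the conventions of the paper; hence
\[ \po\bigl(\gerstenhaber{\cn_a}{\partial^J}\bigr)=-\sum_{k=1}^{r}\sum_{0\prec K\preceq J}\binom{J}{K}\,i_a\theta_k^K\,\partial^{J-K+e_k}. \]

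Next I would compute $\cn_a(\partial^J)$ and match it. The $L$‑connection $\cn$ on $\sections{SB}$ is a coderivation of the $R$‑coalgebra $\sections{SB}$, hence is determined by its corestriction $\pi_a:=\pr_1\circ\cn_a:\sections{SB}\to\sections{B}$. Pairing against the dual frame — which is locally constant on frame elements, so the anchor term $\rho(a)\duality{\chi_j}{\partial^K}$ drops out — and inserting $\cn_a\chi_j=\sum_M i_a\theta_j^M\chi^M$ yields $\duality{\chi_j}{\cn_a\partial^K}=-\duality{\cn_a\chi_j}{\partial^K}=-i_a\theta_j^K$ for every $K$ with $\abs{K}\geqslant 1$ (the case $\abs{K}=1$ being exactly Lemma~\ref{Cergy}), i.e.\ $\pi_a(\partial^K)=-\sum_j i_a\theta_j^K\,\partial_j$. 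Feeding this into the standard formula expressing a coderivation of $\sections{SB}$ through its corestriction gives
\[ \cn_a(\partial^J)=\sum_{0\prec K\preceq J}\binom{J}{K}\,\pi_a(\partial^K)\odot\partial^{J-K}=-\sum_{j}\sum_{0\prec K\preceq J}\binom{J}{K}\,i_a\theta_j^K\,\partial^{J-K+e_j}, \]
which is precisely the expression found for $\po\bigl(\gerstenhaber{\cn_a}{\partial^J}\bigr)$ above, proving the claimed identity.

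The one point that really needs care is the uniform bookkeeping of the combinatorial constants coming from the symmetric product $\odot$ and the induced $\hat{S}(B\dual)$–$S(B)$ duality — the multinomial coefficients $\binom{J}{K}$ and the normalizations of $\partial^K(\chi^K)$ and $\duality{\chi^K}{\partial^K}$ — so that the two sums agree term by term and not merely up to an overall scalar; with the paper's divided‑power–type conventions this matching is automatic. The whole computation reduces, at $\abs{J}=1$, to Lemma~\ref{Cergy}, which is also what supplies the $\abs{K}=1$ terms above.
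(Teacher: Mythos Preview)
Your argument is essentially the same as the paper's: both compute the Gerstenhaber bracket as the commutator of differential operators, project to weight zero in $\hat{S}(B\dual)$, and then match against a direct computation of $\cn_a(\partial^J)$. The only real difference is in the latter step: the paper expands $\cn_a(\partial^J)$ via the invariance of the pairing $\duality{\chi^K}{\partial^J}=K!\,\delta_{K,J}$ together with the \emph{derivation} property of $\cn_a$ on $\hat{S}(B\dual)$ (writing $\cn_a\chi^K=\sum_k K_k\,\chi^{K-e_k}\cn_a\chi_k$), whereas you go through the dual \emph{coderivation} description on $SB$ and the corestriction $\pi_a$; both lead to the same closed formula.

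One concrete correction: in the paper's conventions $\partial^K(\chi^K)=K!$ and $\duality{\chi^K}{\partial^J}=K!\,\delta_{K,J}$, not $1$. With this fix your first displayed expression for $\po\bigl(\gerstenhaber{\cn_a}{\partial^J}\bigr)$ carries an extra $K!$ and becomes $-\sum_k\sum_{0\prec K\preceq J}\frac{J!}{(J-K)!}\,i_a\theta_k^K\,\partial^{J-K+e_k}$, and the same $K!$ appears in your computation of $\pi_a(\partial^K)$ through $\duality{\cn_a\chi_j}{\partial^K}$; so the two sides still match term by term, exactly as in the paper. Your closing caveat about the bookkeeping is thus well placed --- it is the only thing you got slightly off.
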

\begin{proof}
We have seen that, for all $a\in\sections{A}$,
the operator $\cn_a$ is a derivation of
$\sections{\hat{S}(B\dual)}$, which stabilizes
the filtration $\sections{\hat{S}^{\geqslant n}(B\dual)}$.
Therefore, there exist local sections
$\theta_k^M$ of $L\dual$ such that
\[ \cn_a \chi_k=
\sum_{\substack{M\in\NO^r \\ \length{M}\geqslant 1}}
\interior{a}\theta_k^M\cdot \chi^M .\]
It follows that $\cn_a$ may be regarded
as a section of $\hat{S}^{\geqslant 1}(B\dual)\otimes B$:
\[ \cn_a =
\sum_{k=1}^r \bigg(\sum_{\substack{M\in\NO^r \\ \length{M}\geqslant 1}}
\interior{a}\theta_k^M\cdot \chi^M\bigg) \partial_k .\]
On one hand, it follows from
\[ \gerstenhaber{\cn_a}{\partial^J}
= \cn_a\star\partial^J - \partial^J\star
\cn_a =
\sum_{k=1}^r \sum_{\length{M}\geqslant 1}
\interior{a}\theta_k^M\cdot\chi^M\partial^{J+e_k}
-\sum_{k=1}^r \sum_{\length{M}\geqslant 1}
\interior{a}\theta_k^M\cdot\big(\partial^J\star\chi^M\partial_k\big) ,\]
that
\[ \begin{split}
\po\big(\gerstenhaber{\cn_a}{\partial^J}\big)
=&\ -\sum_{k=1}^r \sum_{\length{M}\geqslant 1}
\interior{a}\theta_k^M \frac{J!}{M!(J-M)!}\partial^M(\chi^M)
\cdot\partial^{J-M+e_k} \\
=&\ -\sum_{k=1}^r \sum_{\length{M}\geqslant 1}
\interior{a}\theta_k^M \frac{J!}{(J-M)!}\cdot\partial^{J-M+e_k}
.\end{split} \]
On the other hand, it follows from
\[ 0 = \anchor_{a}\underset{K!\cdot\delta_{K,J}}{\underbrace{\duality{\chi^K}{\partial^J}}}
= \duality{\cn_a\chi^K}{\partial^J}
+ \duality{\chi^K}{\cn_a\partial^J} \]
that
\[ \begin{split}
\cn_a\big(\partial^J\big)
=&\ \sum_K \frac{1}{K!} \duality{\chi^K}{
\cn_a \partial^J}\partial^K \\
=&\ -\sum_K \frac{1}{K!}
\duality{\cn_a \chi^K}{\partial^J} \partial^K \\
=&\ -\sum_K \frac{1}{K!}
\duality{\sum_k K_k \chi^{K-e_k} \cn_a
\chi_k}{\partial^J} \partial^K \\
=&\ -\sum_K \frac{1}{K!} \sum_k K_k
\sum_{\length{M}\geqslant 1}\interior{a}\theta_k^M
\underset{J!\cdot \delta_{K-e_k+M,J}}{\underbrace{\duality{\chi^{K-e_k+M}}{\partial^J}}}
\partial^K \\
=&\ -\sum_k \sum_{\length{M}\geqslant 1}
\frac{J!}{(J-M+e_k)!} (J_k-M_k+1)
\interior{a}\theta_k^M \partial^{J-M+e_k}
\\ =&\ -\sum_k \sum_{\length{M}\geqslant 1} \frac{J!}{(J-M)!}
\interior{a}\theta_k^M \partial^{J-M+e_k}
.\end{split} \]
The proof is complete.
\end{proof}

\begin{lemma}\label{Pantin}
$\etendu{\sigma}\circ\gerstenhaber{\perturbation}{\argument}\circ\etendu{\tau}=\dau$
\end{lemma}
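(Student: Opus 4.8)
The plan is to imitate the proof of Lemma~\ref{Drancy}, systematically replacing the Schouten bracket by the Gerstenhaber bracket, the local frame $\{\partial_j\}$ of $B$ by the frame $\{\partial^J\}_{J\in\NO^r}$ of $S(B)$, and Lemma~\ref{Cergy} by Lemma~\ref{Vincennes}. Throughout I would use the $\pbw$-isomorphism of $R$-coalgebras to identify $\Dpoly{q}$ with $\sections{(SB)^{\otimes q+1}}$, so that a generic homogeneous element of $\sections{\Lambda^p A\dual}\otimes_R\Dpoly{q}$ reads $\omega\otimes u_0\otimes\cdots\otimes u_q$ with $u_i=\pbw(\partial^{J_i})$, and I would use the description of $\etendu{\sigma}$ and $\etendu{\tau}$ from Proposition~\ref{Damas} in terms of $\sigma\otimes\pbw^{\otimes q+1}$, $\tau\otimes(\pbw\inv)^{\otimes q+1}$ and the identification $\varphi$ of~\eqref{varphi}. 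By $R$-multilinearity it suffices to evaluate $\etendu{\sigma}\circ\gerstenhaber{\varrho}{\argument}\circ\etendu{\tau}$ on such an element, and the first step is simply $\etendu{\tau}(\omega\otimes u_0\otimes\cdots\otimes u_q)=p\transpose\omega\otimes\varphi(1\otimes\partial^{J_0}\otimes\cdots\otimes\partial^{J_q})$, a polydifferential operator of $\hat{S}(B\dual)$-degree $0$.

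Next I would apply $\gerstenhaber{\varrho}{\argument}$. Fix a local frame $\{l_k\}$ of $L$ adapted to the splitting, so that $l_k=a_k$ for $k\le\rk(A)$ and $l_k\in j(\sections{B})$ otherwise, with dual frame $\{\lambda_k\}$ satisfying $\lambda_k=p\transpose\alpha_k$ for $k\le\rk(A)$. Since $\varrho=d_L^\nabla+X^\nabla$ is a degree $+1$ vector field on $\cM$ and $p\transpose\omega\otimes\varphi(1\otimes\partial^{J_0}\otimes\cdots\otimes\partial^{J_q})$ is the product, in the $\sections{\Lambda^\bullet L\dual}$-module $\sections{\Lambda^\bullet L\dual}\otimes_R\verticalDpoly{q}$, of the form $p\transpose\omega$ with the ``constant-coefficient'' polydifferential operator $\varphi(1\otimes\partial^{J_0}\otimes\cdots\otimes\partial^{J_q})$, the module-Leibniz rule for $\gerstenhaber{\varrho}{\argument}$ together with $X^\nabla(p\transpose\omega\otimes 1)=0$ and $\nabla(1)=0$ produces a ``horizontal'' term $(d_L^\nabla p\transpose\omega)\otimes\varphi(1\otimes\partial^{J_0}\otimes\cdots\otimes\partial^{J_q})$ plus, up to a sign, a ``vertical'' term $\sum_k(\lambda_k\wedge p\transpose\omega)\otimes\gerstenhaber{\varrho_k}{\varphi(1\otimes\partial^{J_0}\otimes\cdots\otimes\partial^{J_q})}$, where $\varrho_k:=i_{l_k}\varrho$ is a formal vertical vector field on $B$. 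For $l_k=a_k\in\sections{A}$ one has $\varrho_k=\nabla^\lightning_{a_k}$: comparing the identity $d_L^{\nabla^\lightning}=-\delta+\varrho$ with the definition of the Fedosov connection, the only discrepancy between $i_{l_k}\varrho$ and $\nabla^\lightning_{l_k}$ is a term coming from $\delta$, which pairs trivially with the $A$-directions. Finally, $\nabla^\lightning_{a_k}$ being a vertical vector field, hence primitive for the coproduct, unwinding the explicit formulas~\eqref{hola}--\eqref{hazmat} for $\star$ and $\gerstenhaber{\argument}{\argument}$ on $\varphi(1\otimes\partial^{J_0}\otimes\cdots\otimes\partial^{J_q})$ shows that the bracket is purely ``slotwise'':
\[ \gerstenhaber{\nabla^\lightning_{a_k}}{\varphi(1\otimes\partial^{J_0}\otimes\cdots\otimes\partial^{J_q})}
= \sum_{t=0}^q \varphi\big(1\otimes\partial^{J_0}\otimes\cdots\otimes\gerstenhaber{\nabla^\lightning_{a_k}}{\partial^{J_t}}\otimes\cdots\otimes\partial^{J_q}\big). \]

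Then I would apply $\etendu{\sigma}$. Since $\sigma$ annihilates every wedge factor lying in $q\transpose(B\dual)$, only the indices $k\le\rk(A)$ survive, with $\sigma(p\transpose\alpha_k\wedge p\transpose\omega)=\alpha_k\wedge\omega$; on the horizontal term, $\sigma\big((d_L^\nabla p\transpose\omega)\otimes 1\big)=d_A\omega$ because $A$ is a Lie subalgebroid of $L$, so that the $\Lambda^\bullet A\dual$-component of $d_L$ on $p\transpose(\Lambda^\bullet A\dual)$ is $d_A$. Moreover $\sigma$ annihilates everything of positive $\hat{S}(B\dual)$-degree, so $\etendu{\sigma}$ replaces each $\gerstenhaber{\nabla^\lightning_{a_k}}{\partial^{J_t}}$ by its $S^0(B\dual)$-component $\po\big(\gerstenhaber{\nabla^\lightning_{a_k}}{\partial^{J_t}}\big)$, which equals $\nabla^\lightning_{a_k}(\partial^{J_t})$ by Lemma~\ref{Vincennes}, and then applies $\pbw$; by the defining property $\pbw(\nabla^\lightning_l s)=l\cdot\pbw(s)$ of the Fedosov connection, $\pbw\big(\nabla^\lightning_{a_k}(\partial^{J_t})\big)=a_k\cdot u_t$. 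Collecting the two contributions yields
\[ \etendu{\sigma}\circ\gerstenhaber{\varrho}{\argument}\circ\etendu{\tau}(\omega\otimes u_0\otimes\cdots\otimes u_q)
= (d_A\omega)\otimes u_0\otimes\cdots\otimes u_q
+\sum_{k=1}^{\rk(A)}\sum_{t=0}^q (\alpha_k\wedge\omega)\otimes u_0\otimes\cdots\otimes a_k\cdot u_t\otimes\cdots\otimes u_q, \]
which is exactly $\dau(\omega\otimes u_0\otimes\cdots\otimes u_q)$.

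The hard part will be the second paragraph: one must unwind~\eqref{hola}--\eqref{hazmat} to confirm both that $\gerstenhaber{\varrho}{\argument}$, restricted to the image of $\etendu{\tau}$, splits cleanly into the connection part $d_L^\nabla$ on the form coefficients plus the slotwise Gerstenhaber brackets of $\nabla^\lightning_{a_k}$ with the individual $\partial^{J_t}$ (with no extra ``correction'' terms surviving the subsequent projections), and that the signs accumulated en route — the $(-1)^p$ from the module-Leibniz rule, those built into $\star$, $\gerstenhaber{\argument}{\argument}$ and $\hochschild$, the reordering $p\transpose\omega\wedge p\transpose\alpha_k=(-1)^p p\transpose(\alpha_k\wedge\omega)$, and the convention~\eqref{eq:Rome} — combine to reproduce the sign-free second term of $\dau$. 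These are precisely the conventions under which the grid of Proposition~\ref{Liege}, and its perturbation by $\gerstenhaber{\varrho}{\argument}$, is a genuine double complex, so the consistency is forced; the remaining points — the vanishing of the $q\transpose(B\dual)$-directions under $\sigma$, the identity $\sigma\circ d_L^\nabla\circ p\transpose=d_A$, and the conversion of $\nabla^\lightning_{a_k}(\partial^{J_t})$ into the left action $a_k\cdot u_t$ on $\Dpoly{0}$ — are formal.
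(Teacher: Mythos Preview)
Your proposal is correct and follows essentially the same approach as the paper's own proof: both compute $\etendu{\sigma}\circ\gerstenhaber{\varrho}{\argument}\circ\etendu{\tau}$ on a generic element $\omega\otimes\pbw(\partial^{J_0})\otimes\cdots\otimes\pbw(\partial^{J_q})$ by applying $\etendu{\tau}$, splitting $\gerstenhaber{\varrho}{\argument}$ into the $d_L$-term on $p^\top\omega$ plus the slotwise Gerstenhaber brackets $\gerstenhaber{\nabla^\lightning_{a_k}}{\partial^{J_t}}$, then applying $\etendu{\sigma}$ and invoking Lemma~\ref{Vincennes} together with $\pbw\big(\nabla^\lightning_{a_k}(\partial^{J_t})\big)=a_k\cdot\pbw(\partial^{J_t})$. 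The paper is terser about the sign bookkeeping you flag in your last paragraph, but the argument is the same.
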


\begin{proof}
Let $(l_k)_{k\in\{1,\dots,\rk{L}\}}$ denote any local frame for $L$
and let $(\lambda_k)_{k\in\{1,\dots,\rk{L}\}}$ denote the dual local frame for $L\dual$.
Likewise let $(a_k)_{k\in\{1,\dots,\rk{A}\}}$ denote any local frame for $A$
and let $(\alpha_k)_{k\in\{1,\dots,\rk{A}\}}$ denote the dual local frame for $A\dual$.
For all $\omega\in\wa$, $n\in\NN$,
and $J_0,\dots,J_n\in\NO^{r}$, we have
\begin{align*}
&\ \etendu{\sigma}\Big(
\gerstenhaber{\perturbation}
{\etendu{\tau}\big(\omega\otimes
\pbw(\partial^{J_0})\otimes\cdots\otimes\pbw(\partial^{J_n})
\big)}\Big) \\
=&\ \etendu{\sigma}\Big( \gerstenhaber{\perturbation}
{p^\top\omega\otimes\varphi(
1\otimes\partial^{J_0}\otimes\cdots\otimes\partial^{J_n}
)} \Big) \\
=&\ \etendu{\sigma} \Big(
d_L(p^\top\omega)\otimes\varphi(
1\otimes\partial^{J_0}\otimes\cdots\otimes\partial^{J_n}
) \\
&\qquad +\sum_k\lambda_k\wedge p^\top\omega\otimes
\gerstenhaber{\nabla_{l_k}-\interior{l_k}\Xi}{\varphi(
1\otimes\partial^{J_0}\otimes\cdots\otimes\partial^{J_n})}
\Big) \\
=&\ \sigma\big(d_L(p^\top\omega)\otimes 1\big)
\otimes \pbw(\partial^{J_0})\otimes\cdots
\otimes\pbw(\partial^{J_n}) \\
&\qquad +\sum_k \etendu{\sigma}
\Big( p^\top\alpha_k\wedge p^\top\omega
\otimes \gerstenhaber{\cn_{a_k}}
{\varphi(1\otimes\partial^{J_0}\otimes\cdots\otimes
\partial^{J_n})} \Big) \\
=&\ d_A\omega\otimes \pbw(\partial^{J_0})\otimes\cdots
\otimes\pbw(\partial^{J_n}) \\
&\qquad +\sum_k \etendu{\sigma}
\bigg(p^\top(\alpha_k\wedge\omega)\otimes\varphi
\bigg\{ \sum_{t=0}^n
1\otimes\partial^{J_0}\otimes\cdots\otimes
\gerstenhaber{\cn_{a_k}}{\partial^{J_t}}\otimes\cdots\otimes\partial^{J_n}
\bigg\} \bigg) \\
=&\ d_A\omega\otimes \pbw(\partial^{J_0})\otimes\cdots
\otimes\pbw(\partial^{J_n}) \\
&\qquad + \sum_k \sum_{t=0}^n
\alpha_k\wedge\omega\otimes
\pbw(\partial^{J_0})\otimes\cdots\otimes
\pbw\big(\po\gerstenhaber{\cn_{a_k}}
{\partial^{J_t}}\big)\otimes\cdots\otimes\pbw(\partial^{J_n})
.\end{align*}
It follows from Lemma~\ref{Vincennes} that
\[ \pbw\big(\po\gerstenhaber{\cn_{a_k}}{\partial^{J_t}}\big)
= \pbw\big(\cn_{a_k}(\partial^{J_t})\big) = a_k\cdot\pbw(\partial^{J_t}) .\]
Hence, we conclude that
$\etendu{\sigma}\circ\gerstenhaber{\perturbation}{\argument}\circ\etendu{\tau}=\dau$.
\end{proof}

\begin{proof}[Proof of Proposition~\ref{Dhaka}]
We proceed by homological perturbation
--- see Lemma~\ref{HPL} and also~\cite[Lemma~A.1]{MR4150934}.
Starting from the filtered contraction of Proposition~\ref{Damas},
it suffices to perturb the coboundary operator $\gerstenhaber{-\delta+m}{\argument}$
by the operator $\gerstenhaber{\perturbation}{\argument}$.
One checks that
$\etendu{\sigma}\circ\gerstenhaber{\perturbation}{\argument}\circ\etendu{h}=0$.
Therefore, we obtain
\[ \etendu{\perturbed{\sigma}}:=\sum_{l=0}^\infty
\etendu{\sigma}\circ(\gerstenhaber{\perturbation}{\argument}\circ\etendu{h})^l
=\etendu{\sigma} \]
and, making use of Lemma~\ref{Pantin},
\[ \vartheta:=\sum_{l=0}^\infty
\etendu{\sigma}\circ(\gerstenhaber{\perturbation}{\argument}\circ\etendu{h})^l\circ
\gerstenhaber{\perturbation}{\argument}\circ\etendu{\tau}=\etendu{\sigma}\circ
\gerstenhaber{\perturbation}{\argument}\circ\etendu{\tau}=\dau .\]
The result follows immediately since
$-\delta+\perturbation=Q$.

As in the proof of Proposition~\ref{Bagnolet},
applying Lemmas~\ref{lem:polydsemifull} and~\ref{lem:semifullstable},
we conclude that $(\etendu{\perturbed{\tau}},\etendu{\sigma},\etendu{\perturbed{h}})$
is a semifull algebra contraction. Since the differential
$\gerstenhaber{Q+m}{\argument}=\gerstenhaber{Q}{\argument}+\hochschild$
is a derivation with respect to the cup product\footnote{Although
the Gerstenhaber bracket $\gerstenhaber{\argument}{\argument}$
is not a biderivation with respect to the cup product,
but only a biderivation up to homotopy,
it is known that $\gerstenhaber{m}{\argument}=\hochschild$
is indeed a derivation with respect to the cup product
--- see~\cite[Equation~(20)]{MR0161898}.},
according to Lemma~\ref{lem:semifulltau},
we conclude that $\etendu{\perturbed{\tau}}$ is an algebra morphism.
The fact that $\sigma_\natural$ is an algebra morphism
is already contained in Lemma~\ref{lem:polydsemifull}.
\end{proof}

The next proposition gives an alternative characterization of the
map $\etendu{\perturbed{\tau}}$ as the solution of an initial value problem.

\begin{proposition}\label{Mandalay2}
Given $x\in\wa\otimes_R\Dpoly{\diamond}$
and $y\in\sections{\Lambda^\bullet L\dual}
\otimes_R\verticalDpoly{\diamond}$,
we have
\[ \etendu{\perturbed{\tau}}(x)=y
\qquad\text{if and only if}\qquad
\begin{cases}
\etendu{h}(y) = 0 \\
\etendu{h}\big(\gerstenhaber{Q}{y}\big) = 0 \\
\etendu{\sigma}(y) = x
\end{cases}
\quad\text{if and only if}\qquad
\begin{cases}
\gerstenhaber{\eta}{y} = 0 \\
\gerstenhaber{\bracket{\eta}{Q}}{y} = 0 \\
\etendu{\sigma}(y) = x
\end{cases} \]
\end{proposition}
The derivation $\eta$ was defined in Equation~\eqref{derivationeta}.
The proof of Proposition~\ref{Mandalay2} is similar
to the proof of Proposition~\ref{Mandalay} and is therefore omitted.

\begin{proposition}\label{taucoalgebra}
The restriction of the map $\etendu{\perturbed{\tau}}$ of Corollary\ref{cor:Dhaka}
to differential (rather than polydifferential) operators
is a morphism of coalgebras
\[ \etendu{\perturbed{\tau}} : \wa\otimes_R\Dpoly{0} \to
\sections{\Lambda^\bullet L\dual}\otimes_R\verticalDpoly{0} .\]
\end{proposition}

\begin{proof}
Since, according to Proposition~\eqref{Dhaka}, $\etendu{\perturbed{\tau}}$
respects the cup products, we have the commutative diagram
\[ \begin{tikzcd}[column sep=large]
\bigotimes_{\wa}^2 \big(\wa\otimes_R\Dpoly{0}\big)
\arrow[r, "\etendu{\perturbed{\tau}}\otimes\etendu{\perturbed{\tau}}"]
\arrow[d, "\smile"', "\cong"] &
\bigotimes_{\sections{\Lambda^\bullet L\dual}}^2
\big(\sections{\Lambda^\bullet L\dual}\otimes_R\Dpoly{0}\big)
\arrow[d, "\smile", "\cong"'] \\
\wa\otimes_R\Dpoly{1} \arrow[r, "\etendu{\perturbed{\tau}}"'] &
\sections{\Lambda^\bullet L\dual}\otimes_R\Dpoly{1}
\end{tikzcd} \]
in which, owing to the very definition of the cup products,
the two vertical arrows are isomorphisms.

Denoting by $\breve{\Delta}$ the composition of the comultiplication
and the cup product, we are thus led to show that the diagram
\[ \begin{tikzcd}
\wa\otimes_R\Dpoly{0} \arrow[r, "\etendu{\perturbed{\tau}}"]
\arrow[d, "\breve{\Delta}"'] &
\sections{\Lambda^\bullet L\dual}\otimes_R\Dpoly{0}
\arrow[d, "\breve{\Delta}"] \\
\wa\otimes_R\Dpoly{1} \arrow[r, "\etendu{\perturbed{\tau}}"'] &
\sections{\Lambda^\bullet L\dual}\otimes_R\Dpoly{1}
\end{tikzcd} \]
commutes.

By virtue of Proposition~\ref{Mandalay2}, it suffices to show that the three identities
\[ \gerstenhaber{\eta}{\breve{\Delta}\circ\etendu{\perturbed{\tau}}(x)}=0 ,\qquad
\gerstenhaber{\bracket{\eta}{Q}}{\breve{\Delta}\circ\etendu{\perturbed{\tau}}(x)}=0,
\qquad\text{and}\qquad
\etendu{\sigma}\big(\breve{\Delta}\circ\etendu{\perturbed{\tau}}(x)\big)=\breve{\Delta}(x) \]
hold for every $x\in\wa\otimes_R\Dpoly{0}$.

Consider the dg Hopf algebroid $\enveloping{\cF}$
arising from the Fedosov dg Lie algebroid $\cF\to\cM$.
Given any $b\in\sections{\cF}=\sections{\Lambda^\bullet L\dual\otimes\hat{S}(B\dual)\otimes B}$
and $u\in\enveloping{\cF}=\sections{\Lambda^\bullet L\dual\otimes\hat{S}(B\dual)\otimes SB}$,
we have
\begin{multline*}
\gerstenhaber{b}{\breve{\Delta}(u)}
= b\star\Big(\sum_{(u)}u_{(1)}\smile u_{(2)}\Big)
-\Big(\sum_{(u)}u_{(1)}\smile u_{(2)}\Big)\star b \\
= \sum_{(u)}\big((b\circ u_{(1)})\smile u_{(2)}+u_{(1)}\smile(b\circ u_{(2)})\big) \\
\qquad -\sum_{(u)}\big((u_{(1)}\circ b)\smile u_{(2)}+u_{(1)}\smile(u_{(2)}\circ b)\big) \\
= \breve{\Delta}(b\circ u)-\breve{\Delta}(u\circ b)
= \breve{\Delta}(\gerstenhaber{b}{u})
.\end{multline*}
This fact together with Proposition~\ref{Mandalay2} immediately implies that
\begin{gather*}
\gerstenhaber{\eta}{\breve{\Delta}\big(\etendu{\perturbed{\tau}}(x)\big)}
=\breve{\Delta}\big(\gerstenhaber{\eta}{\etendu{\perturbed{\tau}}(x)}\big)=0 \\ \intertext{and}
\gerstenhaber{\bracket{\eta}{Q}}{\breve{\Delta}\big(\etendu{\perturbed{\tau}}(x)\big)}
=\breve{\Delta}\big(\gerstenhaber{\bracket{\eta}{Q}}{\etendu{\perturbed{\tau}}(x)}\big)=0
.\end{gather*}
Furthermore, since $\etendu{\sigma}$ is a morphism of coalgebras and
$\etendu{\sigma}\circ\etendu{\perturbed{\tau}}=\id$, we have
\[ \etendu{\sigma}\circ\breve{\Delta}\circ\etendu{\perturbed{\tau}}(x)
= \breve{\Delta}\circ\etendu{\sigma}\circ\etendu{\perturbed{\tau}}(x)
= \breve{\Delta}(x) .\]
The proof is complete.
\end{proof}

Finally, we have the following
\begin{proposition}\label{thm:Naples}
Given a Lie pair $(L,A)$, each choice of a splitting $j:B\to L$
of the short exact sequence of vector bundles $0\to A \to L \to B \to 0$
and of a torsion-free $L$-connection $\nabla$ on $B$ determines
\begin{enumerate}
\item an $L_\infty$ algebra structure on
$\tot\big(\wa\otimes_R\Dpoly{\bullet}\big)$
with the operator $\dhoch$ as unary bracket;
\item and a Gerstenhaber algebra structure on $\hypercohomology^\bullet_{\CE}(A,\Dpoly{\bullet})$,
the cohomology of the total complex
\[ \Big(\tot\big(\wa\otimes_R\Dpoly{\bullet}\big),\dhoch \Big) ,\]
where the Lie bracket is induced by the binary bracket
of the $L_\infty$ algebra structure on
$\tot\big(\wa\otimes_R \Dpoly{\bullet}\big)$
and the multiplication by the cup product \eqref{eq:cup01}.
\end{enumerate}
\end{proposition}

\begin{proof}
Applying the homotopy transfer theorem for $L_\infty$ algebras
\cite{MR3276839,MR1932522,MR1950958}
to the $L_\infty$ algebra obtained in Proposition~\ref{Bujumbura}
and the contraction obtained in Proposition~\ref{Dhaka},
we get an induced $L_\infty$ algebra structure on
$\tot\big(\wa\otimes_R\Dpoly{\bullet}\big)$,
where the unary bracket is the differential $\dhoch$.
This proves~(1).

For~(2), we notice that at the level of cohomology $\etendu{\perturbed{\tau}}$
and $\sigma_\natural$ induce isomorphisms of graded spaces
which are compatible with both the induced graded Lie algebra structures
(by construction, since these are related via homotopy transfer
along $(\etendu{\perturbed{\tau}},\etendu{\sigma},\etendu{\perturbed{h}})$)
and the induced graded associative algebra structures (by Proposition~\ref{Dhaka}).
It follows at once that the induced graded Lie algebra
and graded associative algebra structures make
$\hypercohomology^\bullet_{\CE}(A,\Dpoly{\bullet})$
into a Gerstenhaber algebra, since the same is true for
$\hypercohomology^\bullet\big(\tot\big(\sections{\Lambda^\bullet L\dual}
\otimes_R\verticalDpoly{\bullet}\big),\gerstenhaber{Q+m}{\argument}\big)$,
according to Proposition~\ref{Bujumbura}. In particular,
this shows that the cup product on $\hypercohomology^\bullet_{\CE}(A,\Dpoly{\bullet})$
is graded commutative,
cf.\ the discussion in Remark~\ref{graded_commutativity}.
\end{proof}

\subsection{Uniqueness of the \texorpdfstring{$L_\infty$}{L∞} structure}\label{sec:uniqueness}

A priori, the Gerstenhaber algebra structures on
$\hypercohomology^\bullet_{\CE}(A,\Tpoly{\bullet})$
and $\hypercohomology^\bullet_{\CE}(A,\Dpoly{\bullet})$
in Propositions~\ref{Bari} and~\ref{thm:Naples} are not canonical,
as their constructions depend on a choice of a splitting $j:B\to L$
of the short exact sequence $0\to A\to L\to B\to 0$
and a torsion-free $L$-connection $\nabla$ on $B$.
The aim of this section is to complete the proof of Theorem~\ref{Macau}
from the introduction and show that both Gerstenhaber algebras are indeed canonical.

As observed at the end of Section~\ref{Fedosov_dg_abd},
the Fedosov dg manifolds arising from different choices of a splitting
and a connection are isomorphic with each other
(and we made the isomorphism explicit in terms of the associated PBW maps).
There are induced isomorphisms between the Fedosov dg Lie algebroids,
hence between the corresponding algebras of polyvector fields
and polydifferential operators from Propositions~\ref{pro:Lyon} and~\ref{Bujumbura}.
We can make these isomorphisms explicit, once again in terms of the associated PBW maps.
Throughout the present section we shall concentrate on the (harder) case
of polydifferential operators --- the proof for the case of polyvector fields
is similar (see also \cite{MR4091493}).

We consider two different choices $j_1,\nabla_1$ and $j_2,\nabla_2$
of a splitting of the short sequence $0\to A\to L\to B\to 0$
and a torsion-free $L$-connection on $B$,
together with the induced homological vector fields $Q_1$ and $Q_2$
on $\cM=L[1]\oplus B$, as in Theorem~\ref{strawberry},
and the induced PBW isomorphisms
$\operatorname{pbw}_1$ and $\operatorname{pbw}_2:\sections{SB}
\to\frac{\enveloping{L}}{\enveloping{L}\sections{A}}$,
as in the discussion preceding Theorem~\ref{thm:EW}.

Recall the isomorphism of $R$-coalgebras
\[ \psi:=\pbw_1\inv\circ\pbw_2:\sections{SB}\to\sections{SB}\]
and the dual isomorphism of $R$-algebras
$\psi\dual:\sections{\hat{S}(B\dual)}\to\sections{\hat{S}(B\dual)}$
introduced at the end of Section~\ref{Fedosov_dg_abd}.
There is an induced isomorphism
$\psi_*:\verticalDpoly{\bullet}\to\verticalDpoly{\bullet}$
between the spaces of polydifferential operators,
sending a polydifferential operator $D\in\verticalDpoly{k}$
to the one $\psi_*(D)\in\verticalDpoly{k}$ defined by
\[ \psi_*(D)\left(\chi^{I_0},\ldots,\chi^{I_k}\right)
= \psi\dual\Big( D\Big((\psi\dual)\inv\left(\chi^{I_0}\right),
\ldots,(\psi\dual)\inv\left(\chi^{I_k}\right)\Big) \Big) \]
for all $\chi^{I_0},\ldots\chi^{I_k}\in\sections{\hat{S}(B\dual)}$.
By construction, $\psi_*$ is compatible with the Gerstenhaber bracket
of polydifferential operators, and in fact
\begin{multline}\label{eq:psi*}
\id\otimes\psi_*:\big( \tot\big(\sections{\Lambda^\bullet L\dual}
\otimes_R\verticalDpoly{\bullet}\big), \gerstenhaber{Q_1+ m}{\argument},
\gerstenhaber{\argument}{\argument} \big) \\
\to \big( \tot\big(\sections{\Lambda^\bullet L\dual}
\otimes_R\verticalDpoly{\bullet}\big), \gerstenhaber{Q_2+ m}{\argument},
\gerstenhaber{\argument}{\argument} \big)
\end{multline}
is an isomorphism of dglas.

We shall need the following lemma.
\begin{lemma}\label{lemma:Rom}
Under the identification $\varphi$ from~\eqref{eq:varphi}, the isomorphism $\psi_*$ satisfies
\[ \varphi\inv\circ\psi_*\circ\varphi(\chi^I\otimes\partial_{J_0}\otimes\cdots\otimes\partial_{J_k})
=\chi^I\otimes\psi\inv(\partial_{J_0})\otimes\cdots\otimes\psi\inv(\partial_{J_k})+\xi, \]
where $\xi\in\hat{S}^{>\length{I}}(B\dual)\otimes S(B)^{\otimes k+1}$.
\end{lemma}

\begin{proof}
In terms of any pair of dual local frames
$\{\chi^I\}_{I\in\mathbb{N}^r}$ and
$\{\partial_{J}\}_{J\in\mathbb{N}^r}$
for $\hat{S}(B\dual)$ and $S(B)$ respectively,
the isomorphisms $\psi\inv:\sections{SB}\to\sections{SB}$ and
$(\psi\dual)\inv:\sections{\hat{S}(B\dual)}\to\sections{\hat{S}(B\dual)}$
are given by
\[ \psi\inv(\partial_J) = \sum_{K\in\mathbb{N}^r} \frac{1}{K!}\psi^K_J \partial_K
\qquad\text{and}\qquad
(\psi\dual)\inv(\chi^I) = \sum_{K\in\mathbb{N}^r} \frac{1}{K!} \psi^I_K \chi^K, \]
where the $\psi^I_J$ are smooth functions on the base manifold
(more precisely, on the open subset $U\subset M$ on which the local frames are defined).

Let $D$ and $D'\in\verticalDpoly{k}$ be the polydifferential
operators defined by
\[ D:=\psi_*\circ\varphi(\chi^I\otimes\partial_{J_0}
\otimes\cdots\otimes\partial_{J_k})
\qquad\text{and}\qquad
D':=\varphi(\chi^I\otimes\psi\inv(\partial_{J_0})
\otimes\cdots\otimes\psi\inv(\partial_{J_k})) .\]
We have to show that their difference $D-D'$
sends $\sections{\hat{S}(B\dual)}^{\otimes k+1}$
into $\sections{\hat{S}^{>\length{I}}(B\dual)}$.

For all $I,J,K\in\NN^r$, we have
\[ \psi\dual(\chi^I) = \chi^{I}
+\text{terms in }\sections{S^{>\length{I}}(B\dual)} \]
and
\[ \partial_J(\chi^K) = \begin{cases} (K-J)!\cdot\chi^{K-J}
& \text{if}\ J\prec K, \\ 0 & \text{otherwise.} \end{cases} \]

The partial order $\prec$ on $\NN^r$ is defined as follows:
$(j_1,j_2,\cdots,j_r)\prec(k_1,k_2,\cdots,k_r)$ if and only if
$j_p\leqslant k_p$ for each $p\in\{1,2,\cdots,r\}$.

It follows that, for all
$\chi^{I_0},\ldots,\chi^{I_k}\in\sections{\hat{S}(B\dual)}$,
\[ \begin{split}
D(\chi^{I_0},\ldots,\chi^{I_K}) & = \psi\dual\left( \chi^I\cdot \partial_{J_0}((\psi\dual)
\inv(\chi^{I_0}))\cdots\partial_{J_K}((\psi\dual)\inv(\chi^{I_K})) \right) \\
& = \psi\dual\left(\sum_{K_0,\ldots,K_k\in\mathbb{N}^r}
\frac{1}{K_0!\cdots K_k!} \psi^{I_0}_{K_0}\cdots\psi^{I_k}_{K_k}
\chi^I\cdot\partial_{J_0}(\chi^{K_0})\cdots\partial_{J_k}(\chi^{K_k})\right) \\
& = \psi^{I_0}_{J_0}\cdots\psi^{I_k}_{J_k}\chi^{I}
+\text{terms in }\sections{S^{>\length{I}}(B\dual)}
,\end{split} \]
while
\[ \begin{split}
D'(\chi^{I_0},\ldots,\chi^{I_K})
& = \chi^I\cdot\psi\inv(\partial_{J_0})
(\chi^{I_0})\cdots\psi\inv(\partial_{J_K})(\chi^{I_K}) \\
& = \sum_{K_0,\ldots,K_k\in\mathbb{N}^r} \frac{1}{K_0!\cdots K_k!}
\psi^{K_0}_{J_0}\cdots\psi^{K_k}_{J_k}
\chi^I\cdot\partial_{K_0}(\chi^{I_0})\cdots\partial_{K_k}(\chi^{I_k}) \\
& = \psi^{I_0}_{J_0}\cdots\psi^{I_k}_{J_k}\chi^{I}\,
+\,\text{terms in }\sections{S^{>\length{I}}(B\dual)}
.\end{split} \]
This concludes the proof of the lemma.
\end{proof}

With these preparations, we are finally ready to complete
the proof of Theorem~\ref{Macau} from the introduction.
In light of Propositions~\ref{Bari} and~\ref{thm:Naples},
the only thing which remains to be shown is the following

\begin{proposition}\label{prop:uniqueness}
The $L_\infty$ algebra structures on
$\tot\big(\wa\otimes_R\Tpoly{\bullet}\big)$ and
$\tot\big(\wa\otimes_R \Dpoly{\bullet}\big)$
from Propositions~\ref{Bari} and~\ref{thm:Naples}, respectively, are independent
of the involved choices up to an $L_\infty$ isomorphism
with linear part the identity map.
In particular, the induced Gerstenhaber algebra structures
on $\hypercohomology^\bullet_{\CE}(A,\Tpoly{\bullet})$
and $\hypercohomology^\bullet_{\CE}(A,\Dpoly{\bullet})$
are independent of the involved choices.
\end{proposition}

\begin{proof}
We shall prove the proposition in detail for the $L_\infty$ algebra structure
on $\tot\big(\wa\otimes_R\Dpoly{\bullet}\big)$.
The claim for $\tot\big(\wa\otimes_R\Tpoly{\bullet}\big)$
can be proved by a similar reasoning, or by comparison with the results from~\cite{MR4091493},
where in fact a stronger result is proven: the $L_\infty$ algebra structure
on $\tot\big(\wa\otimes_R\Tpoly{\bullet}\big)$
is independent of the choice of $\nabla$ altogether,
and is independent of the choice of $j$ up to an $L_\infty$
isomorphism with linear part the identity map
(cf.~\cite[Propositions~4.9 and~4.17]{MR4091493}).

Let $j_1,\nabla_1$ and $j_2,\nabla_2$ be two choices of a splitting and a connection.
Each choice $j_i,\nabla_i$ (with $i\in\{1,2\}$) determines a homological vector field
$Q_i$ on $L[1]\oplus B$, a Poincar\'e--Birkhoff--Witt isomorphism
$\pbw_i:\sections{SB}\to\frac{\enveloping{L}}{\enveloping{L}\sections{A}}$,
and a Dolgushev--Fedosov contraction
\begin{equation}\label{Kigali} \begin{tikzcd}[cramped]
\Big(\tot\big(\wa\otimes_R\Dpoly{\bullet}\big),\dhoch \Big)
\arrow[r, shift left, "\perturbed{\tau}_{\natural,i}"] &
\Big(\tot\big(\sections{\Lambda^\bullet L\dual}\otimes_R\verticalDpoly{\bullet}\big),
\gerstenhaber{Q_i+ m}{\argument}\Big)
\arrow[l, shift left, "\sigma_{\natural,i}"]
\arrow[loop, "\perturbed{h}_{\natural,i}", out=5,in=-5,looseness = 3]
\end{tikzcd} \end{equation}
as in Proposition~\ref{Dhaka}.

Together with an $L_\infty$ algebra structure $\Upsilon_i$ on
$\Big(\tot\big(\wa\otimes_R\Dpoly{\bullet}\big),\dhoch \Big)$,
homotopy transfer along the Dolgushev--Fedosov contraction \eqref{Kigali}
induces a pair of $L_\infty$ quasi-isomorphisms
\begin{gather*} \tot\big(\wa\otimes_R\Dpoly{\bullet}\big)\rightsquigarrow
\tot\big(\sections{\Lambda^\bullet L\dual}\otimes_R\verticalDpoly{\bullet}\big) \\
\tot\big(\sections{\Lambda^\bullet L\dual}\otimes_R\verticalDpoly{\bullet}\big)
\rightsquigarrow \tot\big(\wa\otimes_R\Dpoly{\bullet}\big) \end{gather*}
with linear parts $\perturbed{\tau}_{\natural,i}$ and $\sigma_{\natural,i}$ respectively.
Recall the isomorphism of dglas
\begin{multline*}
\big(\tot\big(\sections{\Lambda^\bullet L\dual}\otimes_R\verticalDpoly{\bullet}\big),
\gerstenhaber{Q_1+m}{\argument},
\gerstenhaber{\argument}{\argument}\big)
\xto{\id\otimes\psi_*} \\
\big(\tot\big(\sections{\Lambda^\bullet L\dual}
\otimes_R\verticalDpoly{\bullet}\big),
\gerstenhaber{Q_2+m}{\argument},
\gerstenhaber{\argument}{\argument}\big)
\end{multline*}
defined in~\eqref{eq:psi*}.
The two induced $L_\infty$ algebra structures
on $\tot\big(\wa\otimes_R\Dpoly{\bullet}\big)$
are related by an $L_\infty$ morphism
\[ F\:\colon\:\Big( \tot\big(\wa\otimes_R\Dpoly{\bullet}\big),
\Upsilon_1\Big)\rightsquigarrow \Big( \tot\big(\wa
\otimes_R\Dpoly{\bullet}\big), \Upsilon_2 \Big) \]
with linear part $f_1=\sigma_{\natural,2}\circ(\id\otimes\psi_*)\circ\perturbed{\tau}_{\natural,1}$.

In order to conclude the proof, we only need to show that $f_1$ is the identity map.
In fact, since an $L_\infty$ morphism $F$ is an isomorphism of $L_\infty$ algebras
if and only if its linear part $f_1$ is an isomorphism between the underlying tangent complexes,
this will show that $F$ is an $L_\infty$ isomorphism.
Moreover, once we have proven that $f_1$ is the identity map,
the second claim will also follow.
In fact, denoting by $\gerstenhaber{\argument}{\argument}_i$ the Lie bracket
on $\hypercohomology^\bullet_{\CE}(A,\Dpoly{\bullet})$ induced
by the quadratic bracket of $\Upsilon_i$, $i=1,2$, in order to conclude the proof
we only need to show
$\gerstenhaber{\argument}{\argument}_1=\gerstenhaber{\argument}{\argument}_2$,
since the associative product (the cup product) is already independent
of the choices at the cochain level: but, $F$ being an $L_\infty$ morphism,
its linear part $f_1$ commutes with the quadratic brackets of $\Upsilon_1$, $\Upsilon_2$,
up to the homotopy $f_2$, thus the induced map on cohomology intertwines the two brackets
$\gerstenhaber{\argument}{\argument}_1$ and $\gerstenhaber{\argument}{\argument}_2$.

Recall the identification $\varphi$ from~\eqref{eq:varphi}, the map
$\sigma:\Lambda^\bullet L\dual\otimes\hat{S}(B\dual)
\to\Lambda^\bullet A\dual$
from~\eqref{Eq:sigma} and the commutative diagram
\[ \begin{tikzcd}[row sep=tiny]
\sections{\Lambda^{\bullet}L\dual}\otimes_R\verticalDpoly{k}
\arrow{dd}{\isomorphism}[swap]{\id\otimes\varphi\inv}
\arrow{dr}{\sigma_{\natural,i}} & \\
& \sections{\Lambda^{\bullet}A\dual}\otimes_R\Dpoly{k} \\
\sections{\Lambda^{\bullet}L\dual\otimes \hat{S}(B\dual)}
\otimes_R\sections{(SB)^{\otimes k+1}}
\arrow{ur}[swap]{\sigma\otimes\pbw_i^{\otimes k+1}} &
\end{tikzcd} \]
defining $\sigma_{\natural,i}$.
According to Lemma~\ref{lemma:Rom}, the maps
$\varphi\inv\circ\psi_*$ and
$\left(\id\otimes(\psi\inv)^{\otimes k+1}\right)\circ\varphi\inv$
from $\verticalDpoly{k}$
to $\sections{\hat{S}(B\dual)\otimes S(B)^{\otimes k+1}}$
coincide up to terms in
$\sections{\hat{S}^{\geqslant 1}(B\dual)\otimes S(B)^{\otimes k+1}}$.
Therefore, we obtain
\[ \sigma_{\natural,2}\circ(\id\otimes\psi_*)
= (\sigma\otimes\pbw_2^{\otimes k+1})\circ(\id\otimes(\varphi\inv\circ\psi_*))
= (\sigma\otimes (\pbw_2\circ\psi\inv)^{\otimes k+1})\circ(\id\otimes\varphi\inv)
= \sigma_{\natural,1} \]
since $\sections{\Lambda^{\bullet}L\dual\otimes\hat{S}^{\geqslant 1}(B\dual)}\subset\ker(\sigma)$ and $\psi=\pbw_1\inv\circ\pbw_2$.
It follows that
\[ f_1=\sigma_{\natural,2}\circ(\id\otimes\psi_*)
\circ\perturbed{\tau}_{\natural,1}
=\sigma_{\natural,1}\circ\perturbed{\tau}_{\natural,1}=\id
.\qedhere \]
\end{proof}

\section{Matched pair case}

This section is devoted to the proof of Theorem~\ref{thm:A},
which was stated in the introduction.
See Theorems~\ref{thm:Bonn} and~\ref{thm:Berlin} below.

Let $(L,A)$ be a Lie pair with quotient $B:=L/A$.
Recall that, if a splitting $j:B\to L$ of
the short exact sequence $0\to A\xto{i} L\to B\to 0$ is given,
whose image $j(B)$ happens to be a Lie subalgebroid of $L$,
then $A$ and $B$ are said to form a
\emph{matched pair of Lie algebroids}
--- see~\cite{MR1460632, MR1716681} for more details.
In such a situation, we write $L=A\bowtie B$ to stress
that $A$ and $B$ --- more precisely $i(A)$ and $j(B)$ ---
play symmetric roles as a pair of complementary
Lie subalgebroids of the Lie algebroid $L$.
In the case of matched pairs, the algebraic structures
on the space of polyvector fields and the space of
polydifferential operators described in Section~\ref{Chatanooga}
reduce to the natural ones described in Theorem~\ref{thm:A}.

\subsection{Dg Lie algebroid arising from a matched pair}

Let $L=A\bowtie B$ be a matched pair of Lie algebroids over a manifold $M$.
Consider the double vector bundle
\[ \begin{tikzcd}[column sep=small,row sep=small]
A\oplus B \arrow[r] \arrow[d] & B \arrow[d, "\varpi"] \\
A \arrow[r, "\pi"'] & M
\end{tikzcd} \]
where the vector bundle $A\oplus B\to A$ is the pullback of the vector bundle
$B\xto{\varpi}M$ via the map $\pi:A\to M$,
while the vector bundle $A\oplus B\to B$ is the pullback of the vector bundle
$A\xto{\pi}M$ via the map $\varpi:B\to M$.

Each section $b\in\sections{B}$ determines a derivation $\vec{b}$
of the algebra of smooth functions $C^\infty(A)$ through the relations
\[ \vec{b}(\pi^* f)=\pi^*\big(\anchor_{b}f\big), \quad\forall f\in C^\infty(M) \]
and \[ \vec{b}(l_\xi)=l_{\nabla^{\Bott}_b \xi}, \quad\forall \xi\in\sections{A\dual} ,\]
where $l_\xi$ denotes the fiberwise linear function
$A\ni a\mapsto\duality{\xi}{a}\in\RR$ on $A$.

The vector bundle $A\oplus B\to A$, whose space of sections is naturally
identified to $C^\infty(A)\otimes_{C^\infty(M)}\sections{B}$,
admits a natural Lie algebroid structure with anchor map
\[ C^\infty(A)\otimes_{C^\infty(M)}\sections{B} \ni g\otimes b
\mapsto g\cdot\vec{b}\in\XX(A), \quad\forall g\in C^\infty(A), b\in\sections{B} \]
and Lie bracket
\[ \bracket{g_1\otimes b_1}{g_2\otimes b_2} = g_1 g_2\otimes\bracket{b_1}{b_2}
+g_1\cdot\vec{b_1}(g_2)\otimes b_2 -g_2\cdot\vec{b_2}(g_1)\otimes b_1,
\quad \forall g_1, g_2\in C^\infty(A), b_1, b_2\in \sections{B} .\]
Similarly, the vector bundle $A\oplus B\to B$ admits a natural Lie algebroid structure.
These two Lie algebroid structures on $A\oplus B$ are known to be
compatible in the following sense:
\begin{lemma}[Mackenzie~\cite{MR2831518}]
If $A\bowtie B$ is a matched pair of Lie algebroids, then
\[ \begin{tikzcd}[column sep=small,row sep=small]
A\oplus B \arrow{r} \arrow{d} & B \arrow{d} \\
A \arrow{r} & M
\end{tikzcd} \]
is a double Lie algebroid.
\end{lemma}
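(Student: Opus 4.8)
We recognise the asserted double Lie algebroid as the one attached by the homological-vector-field formalism of Example~\ref{example:Vauintrob dg vs Lie abd} to the matched sum $L := A\bowtie B$. Since the double vector bundle in the statement has zero core, its associated graded manifold is the bigraded manifold $\cN := A[1]\oplus B[1]$, the linear coordinates along $A$ carrying bidegree $(1,0)$ and those along $B$ bidegree $(0,1)$; thus $C^\infty(\cN)=\sections{\Lambda^\bullet A\dual\otimes\Lambda^\bullet B\dual}$, with $\sections{\Lambda^p A\dual\otimes\Lambda^q B\dual}$ of bidegree $(p,q)$. By Mackenzie's correspondence~\cite{MR2831518} between matched pairs and vacant double Lie algebroids --- equivalently, by Voronov's description of double Lie algebroids via commuting homological vector fields --- a double Lie algebroid structure on this double vector bundle, extending the prescribed Lie algebroid structures on $A\to M$, $B\to M$, $A\oplus B\to A$ and $A\oplus B\to B$, amounts to a pair of homological vector fields $Q_1,Q_2$ on $\cN$, of bidegrees $(1,0)$ and $(0,1)$ respectively, with $\lie{Q_1}{Q_2}=0$; under this dictionary $Q_1+Q_2$ is the Chevalley--Eilenberg differential of a Lie algebroid structure on $A\oplus B\to M$ having $A$ and $B$ as complementary Lie subalgebroids, while $Q_1$ and $Q_2$ encode respectively the two VB-algebroid structures $A\oplus B\to B$ and $A\oplus B\to A$. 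So it suffices to produce such a pair $(Q_1,Q_2)$ out of the matched pair $(A,B)$.

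The matched pair hypothesis supplies $d_L$, the Chevalley--Eilenberg differential on $C^\infty(\cN)=C^\infty(L[1])$ of the Lie algebroid $L=A\bowtie B$ (Example~\ref{example:Vauintrob dg vs Lie abd}); I claim $d_L=Q_1+Q_2$ with $Q_1$ of bidegree $(1,0)$ and $Q_2$ of bidegree $(0,1)$. Since $d_L$ is a derivation raising total degree by one and $C^\infty(\cN)$ is generated by elements of bidegrees $(0,0)$, $(1,0)$ and $(0,1)$, the only bihomogeneous components of $d_L$ that need not vanish for degree reasons are those of bidegrees $(1,0)$ and $(0,1)$, together with the bidegree-$(2,-1)$ component restricted to $\sections{B\dual}$ and the bidegree-$(-1,2)$ component restricted to $\sections{A\dual}$. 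But the former vanishes: for $\beta\in\sections{B\dual}$ and $X,Y\in\sections{A}$,
\[ (d_L\beta)(X,Y) = \rho(X)\langle\beta,Y\rangle - \rho(Y)\langle\beta,X\rangle - \langle\beta,\lie{X}{Y}_L\rangle = 0, \]
because $\sections{B\dual}$ annihilates $\sections{A}$ and $\lie{X}{Y}_L=\lie{X}{Y}_A\in\sections{A}$ as $A$ is a Lie subalgebroid of $L$; the bidegree-$(-1,2)$ component vanishes by the symmetric argument. Hence $d_L=Q_1+Q_2$ with $Q_i$ of the required bidegrees, and matching bihomogeneous components in $0=d_L^2=Q_1^2+\lie{Q_1}{Q_2}+Q_2^2$ --- of bidegrees $(2,0)$, $(1,1)$, $(0,2)$ --- gives $Q_1^2=0$, $\lie{Q_1}{Q_2}=0$, $Q_2^2=0$, as needed.

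It remains to verify that the VB-algebroids determined by $(Q_1,Q_2)$ on $A\oplus B\to B$ and $A\oplus B\to A$ coincide with those prescribed before the statement; this is a direct unwinding of the definitions. For instance, the bidegree-$(1,0)$ vector field $Q_1$ acting on $C^\infty(M)$, on $\sections{A\dual}$, and on $\sections{B\dual}$ recovers respectively the anchor of $A$, the bracket of $A$, and the Bott $A$-connection $\nabla^{\Bott}$ on $B=L/A$; and $\nabla^{\Bott}$ together with the anchor of $A$ is exactly the infinitesimal datum entering the definition of the anchor $a\mapsto\vec a$ and of the bracket of the Lie algebroid $A\oplus B\to B$. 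The symmetric statement for $Q_2$ and $A\oplus B\to A$ follows by interchanging $A$ and $B$. Therefore the prescribed structures assemble into a double Lie algebroid.

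The one genuinely nontrivial ingredient is the dictionary used in the first paragraph --- Mackenzie's equivalence between matched pairs of Lie algebroids and vacant double Lie algebroids, or Voronov's reformulation in terms of commuting homological vector fields. Granting it, the proof reduces to the bookkeeping above together with the observation $d_L^2=0$; I expect that the main (but essentially routine) labour of a fully self-contained argument is exactly the unwinding of the VB-algebroid and linear-Poisson compatibility axioms that this dictionary packages into the relation $\lie{Q_1}{Q_2}=0$. A reader content to take the dictionary for granted may simply cite~\cite{MR2831518}.
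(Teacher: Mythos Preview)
Your argument is correct, but note that the paper does not actually prove this lemma: it is stated as ``due to Mackenzie~\cite{MR2831518}'' and left without proof. So you have supplied considerably more than the paper does.

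Your route---decomposing the Chevalley--Eilenberg differential $d_L$ of the matched sum $L=A\bowtie B$ into bihomogeneous pieces $Q_1+Q_2$ on $A[1]\oplus B[1]$ and reading off $Q_1^2=Q_2^2=\lie{Q_1}{Q_2}=0$ from $d_L^2=0$---is the Voronov supergeometric reformulation of Mackenzie's theorem, and it is the cleanest way to see the result once one accepts the dictionary between double Lie algebroids and pairs of commuting homological vector fields. Your vanishing argument for the $(2,-1)$ and $(-1,2)$ components is exactly where the hypothesis that \emph{both} $A$ and $B$ are Lie subalgebroids enters, and this is the heart of the matter. One small remark: the dictionary you invoke in the first paragraph is really Voronov's theorem (that a double Lie algebroid is equivalent to a pair of commuting homological vector fields of the appropriate bidegrees), not the content of~\cite{MR2831518} itself; Mackenzie's original proof proceeds via the explicit double-Lie-algebroid axioms and the linear Poisson structures on the duals. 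If you want to be self-contained you should cite Voronov for the dictionary, or else, as you note, simply cite Mackenzie for the whole statement---which is precisely what the paper does.
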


According to Voronov~\cite{MR2971727}, any double Lie algebroid induces
a pair of dg Lie algebroids. As an immediate consequence, we have
the following

\begin{corollary}
If $A\bowtie B$ is a matched pair of Lie algebroids,
then $(A[1]\oplus B, d^{\Bott}_A )$ is a dg Lie algebroid over $(A[1], \dace)$.
\end{corollary}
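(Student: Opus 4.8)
The plan is to obtain the dg Lie algebroid structure by unwinding the two results cited immediately above and then matching the resulting object to the explicit data recalled in Section~1. First I would invoke Mackenzie's lemma to promote the matched pair $L=A\bowtie B$ to the double Lie algebroid with sides $A\to M$ and $B\to M$ and total space $A\oplus B$, the two Lie algebroid structures on $A\oplus B$ (over $A$ and over $B$) being the ones written above in terms of the derivations $\vec b$. Next I would feed this double Lie algebroid into the Gracia-Saz--Mehta construction \cite{MR2581370}: keeping the $B$-direction as the ``arrow'' bundle and shifting the $A$-direction produces a Lie algebroid object in the category of dg manifolds whose base is the graded manifold $A[1]$ and whose underlying graded vector bundle is the pullback $A[1]\oplus B\to A[1]$ of $B\to M$, carrying the fiberwise Lie algebroid structure inherited from $A\oplus B\to A$.

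The remaining task is to identify the two homological vector fields with $\dace$ and $\dabott$. On the base, the homological vector field attached to the Lie algebroid $A\to M$ is, by Va\u{\i}ntrob's correspondence \cite{MR1480150}, the Chevalley--Eilenberg differential $\dace$. On the total space, the induced homological vector field is a derivation of $C^\infty(A[1]\oplus B)\cong\sections{\Lambda^\bullet A\dual\otimes\hat{S}B\dual}$; since $B$ enters a Lie algebroid only linearly, this derivation is pinned down by its action on $\sections{\Lambda^\bullet A\dual}$ and on the fiberwise-linear functions $\sections{B\dual}$. On the former it restricts to $\dace$; on the latter I would argue that the double Lie algebroid compatibility forces it to be the Chevalley--Eilenberg differential of $A$ with values in the $A$-module $B\dual$, the $A$-action being dual to the Bott $A$-connection on $B=L/A$ --- this is precisely what the relations $\vec b(\pi^*f)=\pi^*(\rho(b)f)$ and $\vec b(l_\xi)=l_{\nabla^{\Bott}_b\xi}$ encode once they are read through the matched-pair identification of the $A$-action on $B$ with $\nabla^{\Bott}$. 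Comparing with the defining formula for $\dabott$ in Section~1 (and keeping track of the sign convention \eqref{eq:Rome}) then shows that this homological vector field is $\dabott$ and that the projection $A[1]\oplus B\to A[1]$ intertwines $\dabott$ with $\dace$. Compatibility of $\dabott$ with the fiberwise Lie algebroid structure on $A[1]\oplus B\to A[1]$ --- that $\liederivative{\dabott}$ differentiates the bracket and the anchor --- is part of the Gracia-Saz--Mehta output, so it needs no independent verification.

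The main obstacle I expect is bookkeeping rather than anything deep: one must fix the identification $C^\infty(A[1]\oplus B)\cong\sections{\Lambda^\bullet A\dual\otimes\hat{S}B\dual}$ together with the degree shift and the tensor-product signs, and --- the delicate point --- check that the part of the abstract homological vector field of \cite{MR2581370} that is linear in the $B$-coordinates is governed by the Bott connection $\nabla^{\Bott}$ and not by some other $A$-connection on $B$. Once this is in place, the identification of $(A[1]\oplus B,\dabott)\to(A[1],\dace)$ as a dg Lie algebroid is immediate.
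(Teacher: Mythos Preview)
Your proposal is correct and follows the same route as the paper: invoke Mackenzie's lemma to obtain the double Lie algebroid, then apply the Gracia-Saz--Mehta correspondence to extract a dg Lie algebroid. The paper in fact gives no further argument --- the corollary is stated as ``an immediate consequence'' of those two cited results --- so your additional work identifying the homological vector fields with $\dace$ and $\dabott$ goes beyond what the paper proves here (the paper simply \emph{describes} the resulting structure in the paragraphs following the corollary, without verification).
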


Here the dg manifold structures on $(A[1]\oplus B,\dabott)$
and $(A[1],\dace)$ are induced, respectively,
from the Lie algebroid structures
on $A\oplus B\to B$ and $A\to M$ according to Va\u{\i}ntrob's
theorem \cite{MR1480150} --- see Example~\ref{example:Vaintrob_dg_vs_Lie_abd}.
In what follows, we write $\cB$ to denote
the dg manifold $(A[1]\oplus B,\dabott)$ and
$A[1]$ to denote the dg manifold $(A[1],\dace)$.

The space of sections of the dg Lie algebroid $\cB\to A[1]$
can be identified naturally with
$\sections{\Lambda^\bullet A\dual\otimes B}$.
Then, the Lie bracket on $\sections{\Lambda^\bullet A\dual\otimes B}$ is
\begin{equation}\label{eq:bracket}
\bracket{\xi_1\otimes b_1}{\xi_2\otimes b_2}
=\xi_1\wedge\xi_2\otimes\bracket{b_1}{b_2}
+\xi_1\wedge(\nabla^{\Bott}_{b_1}\xi_2)\otimes b_2
-(\nabla^{\Bott}_{b_2}\xi_1)\wedge\xi_2\otimes b_1
\end{equation}
for all $\xi_1,\xi_2\in\wa$ and $b_1,b_2\in\sections{B}$,
while the anchor map
\[ \sections{\Lambda^\bullet A\dual\otimes B}
\xto{\baranchor}\Der\big(\wa\big) \]
is characterized by the relation
\begin{equation}\label{eq:anchor}
\baranchor_{\xi\otimes b}(\eta)=\xi\wedge\nabla^{\Bott}_b\eta
,\end{equation}
for all $\xi,\eta\in\wa$
and $b\in\sections{B}$.
Finally, the differential on the space of sections
of $\cB\to A[1]$ induced by
the homological vector fields on $\cB$ and $A[1]$ is simply the
Chevalley--Eilenberg differential
\[ \dabott: \sections{\Lambda^\bullet A\dual\otimes B}
\to \sections{\Lambda^{\bullet+1}A\dual\otimes B} \]
corresponding to the Bott representation of $A$ on $B$.

\subsection{Fedosov dg manifolds associated with matched pairs}

The identification $L=A\oplus B$ induces a decomposition
\begin{equation}\label{eq:DL}
\sections{\Lambda^n L\dual}=\bigoplus_{\substack{p+q=n\\ p,q\geqslant 0}}
\sections{\Lambda^p A\dual\otimes\Lambda^q B\dual},
\quad n\geqslant 0.
\end{equation}

Denote by
\[ d_L:\sections{\Lambda^\bullet L\dual}
\to\sections{\Lambda^\bullet L\dual} \]
the Chevalley--Eilenberg differential of the Lie algebroid $L$
for cochains with trivial coefficients.

Since $A$ and $B$ play symmetric roles as
a pair of complementary Lie subalgebroids of the Lie algebroid $L$,
we have a pair of Bott connections: the Bott $A$-connection on
$B$ and the Bott $B$-connection on $A$, both denoted by $\nabla^{\Bott}$
by abuse of notations.
Denote by
\[ d_A^{\Bott}:
\sections{\Lambda^\bullet A\dual\otimes\Lambda^\diamond B\dual}
\to\sections{\Lambda^{\bullet+1}A\dual\otimes\Lambda^\diamond B\dual} \]
the Chevalley--Eilenberg differential of the Lie algebroid $A$
for cochains with coefficients in the $A$-module $\Lambda B\dual$
--- the implicit flat $A$-connection $\nabla^{\Bott}$
on $\Lambda B\dual$ is induced from the Bott $A$-connection on $B$.
Similarly, denote by
\[ d_B^{\Bott}:
\sections{\Lambda^\bullet A\dual\otimes\Lambda^\diamond B\dual}\to
\sections{\Lambda^\bullet A\dual\otimes\Lambda^{\diamond+1}B\dual} \]
the Chevalley--Eilenberg differential of the Lie algebroid $B$
for cochains with coefficients in the $B$-module $\Lambda A\dual$
--- the implicit flat $B$-connection $\nabla^{\Bott}$
on $\Lambda A\dual$ is induced from the Bott $B$-connection on $A$.

In order to describe the Fedosov dg manifold arising from the Lie pair $(L, A)$,
we need to choose a torsion-free $L$-connection $\nabla$ on $B$.
Such an $L$-connection on $B$ is completely determined by,
and in fact equivalent to, a torsion-free $B$-connection $\nabla^{1,0}$ on $B$.

The following lemma can be verified by a direct computation.
\begin{lemma}\label{lem:DL}
For a matched pair $L=A\bowtie B$, having identified
$\Lambda L\dual$ with $\Lambda A\dual\otimes\Lambda B\dual$
as in \eqref{eq:DL}, we have
\[ d_L=d_A^{\Bott}+d_B^{\Bott} .\]
Furthermore, the covariant differential $d_L^\nabla$
appearing in Theorem~\ref{strawberry} decomposes as the sum
\[ d^\nabla_L=d^{\Bott}_A+d^{\nabla^{1,0}}_B \]
of
\[ d^{\Bott}_A: \sections{\Lambda^\bullet A\dual
\otimes\Lambda^\diamond B\dual\otimes\hat{S}(B\dual)}\to
\sections{\Lambda^{\bullet+1} A\dual
\otimes\Lambda^\diamond B\dual\otimes\hat{S}(B\dual)} \]
and
\[ d^{\nabla^{1,0}}_B: \sections{\Lambda^\bullet A\dual
\otimes\Lambda^\diamond B\dual\otimes\hat{S}(B\dual)}\to
\sections{\Lambda^\bullet A\dual
\otimes\Lambda^{\diamond+1} B\dual\otimes\hat{S}(B\dual)} .\]
\end{lemma}

Similarly, the $1$-form
$X^\nabla\in\sections{L\dual\otimes\hat{S}^{\geqslant 2}(B\dual)
\otimes B}$
valued in formal vertical vector fields on $B$
constructed in Theorem~\ref{strawberry} decomposes as the sum
\[ X^\nabla=X^{1,0}+X^{0,1} \]
of two formal vertical vector fields
\[ X^{0,1}\in\sections{A\dual\otimes\hat{S}^{\geqslant 2}(B\dual)
\otimes B} \qquad\text{and}\qquad X^{1,0}\in
\sections{B\dual\otimes\hat{S}^{\geqslant 2}(B\dual)\otimes B} .\]

The following lemma is quite obvious --- see~\cite[Section 5]{MR3724780}.

\begin{lemma}\label{Qpq}
Given a matched pair $L=A\bowtie B$, consider the Lie pair $(L,A)$
with the splitting identifying $B$ to a Lie subalgebroid of $L$
complementary to $A$ and choose a torsion-free $B$-connection
$\nabla^{1,0}$ on $B$.
Then, the Fedosov homological vector field $Q$ constructed in
Theorem~\ref{strawberry} is the sum $Q=Q^{1,0}+Q^{0,1}$ of the pair of operators
\begin{gather*}
Q^{1,0}:\sections{\Lambda^\bullet A\dual\otimes
\Lambda^\diamond B\dual\otimes\hat{S}(B\dual)}
\to\sections{\Lambda^\bullet A\dual\otimes\Lambda^{\diamond+1}
B\dual\otimes\hat{S}(B\dual)}
\\ \intertext{and}
Q^{0,1}:\sections{\Lambda^\bullet A\dual\otimes\Lambda^\diamond
B\dual\otimes\hat{S}(B\dual)}
\to\sections{\Lambda^{\bullet+1} A\dual\otimes\Lambda^\diamond
B\dual\otimes\hat{S}(B\dual)}
\end{gather*}
defined by the relations
\[ Q^{1,0}=-\delta+d^{\nabla^{1,0}}_B+X^{1,0}
\qquad\text{and}\qquad Q^{0,1}=d^{\Bott}_A+X^{0,1} \]
and satisfying the relations
\[ Q^{1,0}\circ Q^{1,0}=0, \qquad
Q^{0,1}\circ Q^{0,1}=0, \qquad\text{and}\qquad
Q^{0,1}\circ Q^{1,0}+Q^{1,0}\circ Q^{0,1}=0 .\]
\end{lemma}

We now give a more detailed description of the operators
$Q^{1,0}$ and $Q^{0,1}$, which will be needed later on.

Consider (i) the isomorphism of left $R$-modules
$\pbw:\sections{SB}\to\frac{\enveloping{L}}{\enveloping{L}\sections{A}}$
arising from the Lie pair $(L,A)$ and the $L$-connection $\nabla$ on $B$,
(ii) the isomorphism of left $R$-modules
$\pbwb:\sections{SB}\to\enveloping{B}$
arising from the Lie pair $(B,0)$ and the $B$-connection $\nabla^{1,0}$ on $B$
--- see Equation~\eqref{eq:pbw} ---
and (iii) the natural isomorphism of left $R$-modules
\begin{equation}\label{eq:BLA}
\begin{tikzcd}
\frac{\enveloping{L}}{\enveloping{L}\sections{A}}
\arrow[r, "\isomorphism"] & \enveloping{B}
\end{tikzcd}
.\end{equation}
The following lemma can be verified easily by applying
the PBW iteration formula in \cite{MR2989383,arXiv:1408.2903}
--- see also \cite[\S~3.4]{MR4150934}.

\begin{lemma}\label{pbwB}
Given a matched pair $L=A\bowtie B$, the diagram
\[ \begin{tikzcd}[row sep=tiny]
& \frac{\enveloping{L}}{\enveloping{L}\sections{A}}
\arrow[dd, "\isomorphism"] \\
\sections{SB} \arrow[ru, "\pbw"] \arrow[rd, "\pbwb"'] & \\
& \enveloping{B}
\end{tikzcd} \]
commutes.
\end{lemma}

The flat $L$-connection $\cn$ on $SB$ defined
by Equation~\eqref{eq:cn} gives rise
to a flat $A$-connection on $SB$:
\begin{equation}\label{eq:cnta}
\cnt_a(s)=\cn_{i(a)}s=\pbw\inv\big(i(a)\cdot\pbw(s)\big)
=\pbwb\inv\big(a\star\pbw(s)\big)
\end{equation}
and a flat $B$-connection on $SB$:
\begin{equation}\label{eq:cntb}
\cnt_b(s)=\cn_{j(b)}s=\pbw\inv\big(j(b)\cdot\pbw(s)\big)
=\pbwb\inv\big(b\cdot\pbwb(s)\big)
.\end{equation}
Here $a\in\sections{A}$, $b\in\sections{B}$, $s\in\sections{SB}$.
The symbol $\cdot$ appearing in the r.h.s.\ of
Equation~\eqref{eq:cntb} denotes the multiplication
in $\enveloping{B}$, while the symbol $\star$ appearing
in the r.h.s.\ of Equation~\eqref{eq:cnta} denotes the action
of $A$ on $\enveloping{B}$ induced by the multiplication
in $\enveloping{L}$ and the natural identification
of $\frac{\enveloping{L}}{\enveloping{L}\sections{A}}$
with $\enveloping{B}$.

According to Theorem~\ref{thm:EW}
(see also \cite[Theorem~4.7]{MR4150934}),
the homological vector field $Q$
on the Fedosov dg manifold $L[1]\oplus B$
is the Chevalley–Eilenberg differential \eqref{eq:CEL}
corresponding to the flat $L$-connection on $\hat{S}(B\dual)$
dual to the flat $L$-connection $\cn$ on $SB$
defined by Equation~\eqref{eq:cn}.
Therefore, as an immediate consequence of Lemma~\ref{lem:DL},
we obtain
\begin{corollary}\label{cor:Q1001}
Under the assumptions of Lemma~\ref{Qpq},
\begin{enumerate}
\item the operator $Q^{1,0}$ coincides with the Chevalley--Eilenberg
differential of the Lie algebroid $B$ for cochains with coefficients
in the $B$-module $\Lambda A\dual\otimes\hat{S}(B\dual)$
with the $B$-representation
\[ \nabla_b(\alpha\otimes\varsigma)=\nabla^{\Bott}_b\alpha\otimes\varsigma
+\alpha\otimes\cnt_b\varsigma ,\]
for all $\alpha\otimes\varsigma\in\sections{\Lambda A\dual\otimes\hat{S}(B\dual)}$.
\item and the operator $Q^{0,1}$ coincides
with the Chevalley--Eilenberg differential of the Lie algebroid $A$
for cochains with coefficients in the $A$-module
$\Lambda B\dual\otimes\hat{S}(B\dual)$ with the $A$-representation
\[ \nabla_a(\beta\otimes\varsigma)=\nabla^{\Bott}_a\beta\otimes\varsigma
+\beta\otimes\cnt_a\varsigma .\]
for all
$\beta\otimes s\in\sections{\Lambda B\dual\otimes\hat{S}(B\dual)}$.
\end{enumerate}
Here $\cnt_a$ and $\cnt_b$ are the flat connections introduced
in Equations~\eqref{eq:cnta} and~\eqref{eq:cntb}, respectively.
\end{corollary}

Restricting the operator $Q^{0,1}$
to $\sections{\Lambda^\bullet A\dual\otimes\hat{S}(B\dual)}$
determines a derivation $Q^{0,1}$ of
$\sections{\Lambda^\bullet A\dual\otimes\hat{S} B\dual}$
of degree $+1$ such that $Q^{0,1}\circ Q^{0,1}=0$.
In other words, $Q^{0,1}$ is a homological vector field on
the graded manifold $A[1]\oplus B$.
Hence $(A[1]\oplus B,Q^{0,1})$ is a dg manifold.
Corollary~\ref{cor:Q1001} implies that $(A[1]\oplus B,Q^{0,1})$
is indeed an instance of the Kapranov dg manifolds investigated
in~\cite{MR2989383,arXiv:1408.2903}.

\begin{remark}
Given a complex manifold $X$, let $A=T^{0,1}_X$ and $B= T^{1,0}_X$.
Then $T_X^\CC=A\bowtie B$ is a matched pair of Lie algebroids
over $\CC$. The Bott $T^{0,1}_X$-connection on $T^{1,0}_X$
encodes the holomorphic vector bundle structure of $T^{1,0}_X$;
the (local) sections of $T^{1,0}_X$ which are flat
w.r.t.\ the $T^{0,1}_X$-connection are precisely
the (local) holomorphic sections of $T^{1,0}_X$.
In other words, the Chevalley--Eilenberg differential
associated with the Bott $T^{0,1}_X$-connection on $T^{1,0}_X$
is the Dolbeault operator
\[ \bar{\partial}:\Omega^{0,\bullet}(X,T^{1,0}_X)
\to\Omega^{0,\bullet+1}(X,T^{1,0}_X) .\]
Similarly, the Chevalley--Eilenberg differential associated with
the Bott $T^{1,0}_X$-connection on $T^{0,1}_X$ is the complex
conjugate operator
\[ {\partial}:\Omega^{\bullet,0}(X,T^{0,1}_X)
\to\Omega^{\bullet+1,0}(X,T^{0,1}_X) .\]
To construct a Fedosov dg manifold corresponding to the matched
pair $(T^{0,1}_X,T^{1,0}_X)$, we need a torsion-free
$T_X^\CC$-connection $\nabla$ on $T^{1,0}_X$,
which is necessarily the sum $\nabla=\bar{\partial}+\nabla^{1,0}$
of the Dolbeault operator and a torsion-free $T^{1,0}_X$-connection
$\nabla^{1,0}$ on $T^{1,0}_X$ --- more precisely, we have $d^\nabla=\bar{\partial}+d^{\nabla^{1,0}}$.
The graded manifold underlying this Fedosov dg manifold
is $T_X^\CC[1]\oplus T^{1,0}_X$ with the algebra of functions
\[ C^\infty\big(T_X^\CC[1]\oplus T^{1,0}_X\big)\cong
\bigoplus_{p\geqslant 0,q\geqslant 0}\Omega^{p,q}(X,\hat{S}(T_X^{1,0})\dual)
.\]
Its homological vector field decomposes as the sum
\[ Q=Q^{1,0}+Q^{0,1} \] of two operators
\[ Q^{1,0}:\Omega^{p,q}(X,\hat{S}(T_X^{1,0})\dual)
\to\Omega^{p+1,q}(X,\hat{S}(T_X^{1,0})\dual) \]
and
\begin{equation}\label{eq:Q01}
Q^{0,1}:\Omega^{p,q}(X,\hat{S}(T_X^{1,0})\dual)
\to\Omega^{p,q+1}(X,\hat{S}(T_X^{1,0})\dual)
\end{equation}
given by
\[ Q^{1,0}=-\delta+d^{\nabla^{1,0}}+X^{1,0}
\qquad\text{and}\qquad
Q^{0,1}=\bar{\partial}+X^{0,1} \]
with
\[ X^{1,0}\in\Omega^{1,0}(X,\hat{S}^{\geqslant 2}(T_X^{1,0})\dual
\otimes T_X^{1,0})
\qquad\text{and}\qquad
X^{0,1}\in\Omega^{0,1}(X,\hat{S}^{\geqslant 2}(T_X^{1,0})\dual
\otimes T_X^{1,0}) .\]
Here $\delta$ is the usual Koszul operator
and $d^{\nabla^{1,0}}$ is the Chevalley--Eilenberg
differential associated with the $T^{1,0}_X$-connection
$\partial\otimes\id+\id\otimes\nabla^{1,0}$
on $\Lambda^q(T_X^{0,1})\dual\otimes\hat{S}(T_X^{1,0})\dual$.

Restricting to $p=0$ in~\eqref{eq:Q01},
we obtain a derivation $Q^{0,1}$ of degree $+1$ of the algebra
$\Omega^{0,\bullet}(X,\hat{S}(T_X^{1,0})\dual)$ satisfying
$Q^{0,1}\circ Q^{0,1}=0$.
Therefore, $Q^{0,1}$ induces a $L_\infty[1]$ algebra structure
(see \cite{MR1671737}) on $\Omega^{0,\bullet}(X,T_X^{1,0})$,
and $\big(T_X^{0,1}[1]\oplus T_X^{1,0},Q^{0,1}\big)$ is a
Kapranov dg manifold --- see \cite[Section 5.5]{arXiv:1408.2903}.
If the complex manifold $X$ admits a Kähler metric,
there is a canonical torsion-free flat $T^{1,0}_X$-connection
$\nabla^{1,0}$ on $T^{1,0}_X$ induced by the Levi-Civita connection
on $T_X$. In that case, Kapranov obtained an explicit formula
for the $L_\infty[1]$ algebra structure on
$\Omega^{0,\bullet}(X,T_X^{1,0})$
--- see \cite[Theorem 2.6]{MR1671737}.
Such $L_\infty[1]$ algebras played an important role
in Kapranov's investigation \cite{MR1671737} of Atiyah classes
and Rozansky--Witten invariants
--- see also \cite{arXiv:1408.2903,MR2989383,MR3322372}.
\end{remark}

\subsection{Polyvector fields associated with matched pairs}
According to Proposition~\ref{pro:hongkong},
the dg Lie algebroid structure on $\cB\to A[1]$ induces a
differential Gerstenhaber algebra structure on
$\sections{\Lambda^{\bullet+1}\cB}\isomorphism
\sections{\Lambda^\bullet A\dual\otimes\Lambda^{\bullet+1}B}$.
Its differential is the Chevalley--Eilenberg differential
\begin{equation}\label{eq:da}
\dabott: \sections{\Lambda^k A\dual\otimes\Lambda^{p+1}B}
\to\sections{\Lambda^{k+1} A\dual\otimes\Lambda^{p+1}B}
\end{equation}
corresponding to the Bott representation of $A$ on $\Lambda B$;
its associative multiplication is the wedge product
\begin{equation}\label{eq:wedge03}
\wedge:\sections{\Lambda^{k}A\dual\otimes\Tpolya{p}}
\otimes\sections{\Lambda^{l}A\dual\otimes\Tpolya{q}}
\to\sections{\Lambda^{k+l}A\dual\otimes\Tpolya{(p+q+1)}}
;\end{equation} and its Lie bracket
\begin{equation}\label{eq:lie01}
\schouten{\argument}{\argument}:
\sections{\Lambda^{k}A\dual\otimes\Tpolya{p}}
\otimes\sections{\Lambda^{l}A\dual\otimes\Tpolya{q}}
\to\sections{\Lambda^{k+l}A\dual\otimes\Tpolya{p+q}}
\end{equation}
is the Schouten bracket of the dg Lie algebroid $\cB\to A[1]$
extending the Lie bracket \eqref{eq:bracket} by way of the Leibniz
rule and the anchor map \eqref{eq:anchor}.

Applying Proposition~\ref{pro:hongkong} to the dg Lie algebroid
$\cB\to A[1]$, we obtain the following
\begin{proposition}\label{pro:zurich}
Let $A\bowtie B$ be a matched pair of Lie algebroids.
\begin{enumerate}
\item When endowed with the differential $\dabott$ \eqref{eq:da};
the associative multiplication \eqref{eq:wedge03};
and the Lie bracket \eqref{eq:lie01},
$\tot\sections{\Lambda^\bullet A\dual\otimes\Lambda^{\diamond+1}B}$
is a differential Gerstenhaber algebra, whence a dgla.
\item When endowed with the wedge product \eqref{eq:wedge03}
and the Schouten bracket \eqref{eq:lie01}, the cohomology
$\hypercohomology^\bullet_{\CE}(A,\Lambda^{\bullet+1}B)$
is a Gerstenhaber algebra.
\end{enumerate}
\end{proposition}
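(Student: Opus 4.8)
The plan is to read off both statements from Proposition~\ref{pro:hongkong} applied to the dg Lie algebroid $\caA\to A[1]$ produced by the corollary above; the matched-pair hypothesis enters only through the existence of that dg Lie algebroid and the explicit formulas for its bracket, anchor, and differential, all of which are already recorded. What remains is to match the abstract data of Proposition~\ref{pro:hongkong} with the objects named in the statement.

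For part~(1): the sections of $\caA\to A[1]$ are $\sections{\Lambda^\bullet A\dual\otimes B}$, so for every $k\geqslant -1$ one has $\sections{\Lambda^{k+1}\caA}\cong\sections{\Lambda^\bullet A\dual\otimes\Lambda^{k+1}B}$ and hence a bigraded identification $\sections{\Lambda^{\bullet+1}\caA}\cong\tot\sections{\Lambda^\bullet A\dual\otimes\Lambda^{\bullet+1}B}$. Under it the wedge product of Proposition~\ref{pro:hongkong} becomes the product combining $\wedge$ on $\Lambda^\bullet A\dual$ and $\wedge$ on $\Lambda^{\bullet+1}B$ with the Koszul sign governed by~\eqref{eq:Rome}, and the Schouten bracket becomes the unique extension by biderivation of $\wedge$ of the bracket on $\sections{\Lambda^\bullet A\dual\otimes B}$ written out above. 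The one point worth checking is that the differential $\mathcal{Q}$ induced on $\sections{\Lambda^{\bullet+1}\caA}$ by the homological vector fields of $\caA$ and $A[1]$ is the operator $\dabott$ of~\eqref{eq:Vienna}: indeed $\mathcal{Q}$ is a degree~$(+1)$ derivation of $\wedge$ restricting to $\dace$ on $\sections{\Lambda^\bullet A\dual}$ and to $\dabott$ on $\sections{\caA}$, and any such derivation agrees on $\sections{\Lambda^\bullet A\dual\otimes\Lambda^{k+1}B}$ with the Chevalley--Eilenberg differential of the Bott representation of $A$ on $\Lambda^{k+1}B$. With these identifications in hand, part~(1) is precisely Proposition~\ref{pro:hongkong}(1) for $\cL=\caA$.

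For part~(2): since $\mathcal{Q}=\dabott$ acts only in the $\Lambda^\bullet A\dual$ direction, it preserves the exterior degree in $B$, so $\big(\sections{\Lambda^{\bullet+1}\caA},\mathcal{Q}\big)$ is the total complex of the bicomplex $\big(\sections{\Lambda^\bullet A\dual\otimes\Lambda^{\bullet+1}B},\dabott,0\big)$ with vanishing second differential. Its hypercohomology is therefore canonically $\hypercohomology^\bullet_{\CE}(A,\Lambda^{\bullet+1}B)$, the Chevalley--Eilenberg hypercohomology of $A$ with coefficients in the complex $\Lambda^{\bullet+1}B$ carrying the zero differential, and part~(2) follows by transporting Proposition~\ref{pro:hongkong}(2) for $\cL=\caA$ along this identification.

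I expect no genuine obstacle: the real content is the construction of $\caA\to A[1]$, which is already available, and the rest is a formal corollary of Proposition~\ref{pro:hongkong}. The only delicate bookkeeping is in the signs and gradings --- making the Koszul rule for $\wedge$, the total-complex sign, and the induced differential $\mathcal{Q}$ consistent with~\eqref{eq:Rome} and~\eqref{eq:Vienna} --- and, in the same vein, verifying that being a $\wedge$-derivation extending $\dabott$ on $\sections{\caA}$ genuinely forces $\mathcal{Q}$ to be the Bott Chevalley--Eilenberg differential on $\Lambda^{\bullet+1}B$.
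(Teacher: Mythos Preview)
Your proposal is correct and follows exactly the approach the paper takes: the proposition is stated without a separate proof, being presented as an immediate consequence of the preceding paragraph, which applies Proposition~\ref{pro:hongkong} to the dg Lie algebroid $\caA\to A[1]$ and identifies the resulting differential with the Chevalley--Eilenberg differential~\eqref{eq:Vienna}. If anything, you have spelled out the identifications more carefully than the paper does.
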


The following theorem is the first main result of the present section.

\begin{theorem}\label{thm:Bonn}
Let $L=A\bowtie B$ be a matched pair and let $\nabla$ be a torsion-free
$L$-connection on $B$.
Then the $L_\infty$ algebra
$\tot\big(\wa
\otimes_R\Tpoly{\diamond}\big)$
and the Gerstenhaber algebra
$\hypercohomology^\bullet_{\CE}(A,\Tpoly{\diamond})$
of Proposition~\ref{Bari} coincide with the dgla
$\tot\sections{\Lambda^\bullet A\dual\otimes\Lambda^{\diamond+1}B}$
and the Gerstenhaber algebra
$\hypercohomology^\bullet_{\CE}(A,\Lambda^{\diamond+1}B)$
of Proposition~\ref{pro:zurich}, respectively.
\end{theorem}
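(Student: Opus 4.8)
The starting point is that the graded vector space underlying the $L_\infty$ algebra $\tot\big(\sections{\Lambda^\bullet A\dual}\otimes_R\Tpoly{\bullet}\big)$ of Proposition~\ref{Bari} is, degree by degree, exactly $\tot\sections{\Lambda^\bullet A\dual\otimes\Lambda^{\bullet+1}B}$ (recall $\Tpoly{k}=\sections{\Lambda^{k+1}B}$ for $k\geqslant 0$ and $\Tpoly{-1}=R=\sections{\Lambda^{0}B}$), and that in both this $L_\infty$ algebra and the dgla of Proposition~\ref{pro:zurich} the unary bracket is $\dabott$. Thus the statement splits into: (i) the transferred binary bracket $\ell_2$ equals the Schouten bracket $\schouten{\argument}{\argument}$ of the dg Lie algebroid $\cB=A[1]\oplus B\to A[1]$, and (ii) the transferred higher brackets $\ell_n$, $n\geqslant 3$, vanish. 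Since, by the uniqueness results established above --- and, for polyvector fields, more strongly by \cite{arXiv:1712.00665} --- the transferred $L_\infty$ structure is independent of the chosen splitting and connection up to an $L_\infty$ isomorphism with identity linear part, I am free to fix the matched-pair splitting $j:B\to L$ and any convenient torsion-free $L$-connection $\nabla$ on $B$ extending the Bott connection, producing the Fedosov dg manifold $(\cM=L[1]\oplus B,Q=-\delta+\varrho)$ of Theorem~\ref{strawberry}, the Fedosov dg Lie algebroid $\cF\to\cM$, and the Dolgushev--Fedosov contraction of Proposition~\ref{Bagnolet} with structure maps $\etendu{\sigma}$, $\etendu{\perturbed{\tau}}$, $\etendu{\perturbed{h}}$.

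The plan is to deduce (i) and (ii) at once from the single identity
\[ \schouten{\etendu{\perturbed{\tau}}(x)}{\etendu{\perturbed{\tau}}(y)}=\etendu{\perturbed{\tau}}\big(\schouten{x}{y}\big)\qquad\text{for all }x,y\in\tot\sections{\Lambda^\bullet A\dual\otimes\Lambda^{\bullet+1}B}, \]
i.e.\ from the assertion that, \emph{in the matched-pair case}, $\etendu{\perturbed{\tau}}$ is a strict morphism of dglas onto its image, which is then a dg Lie subalgebra of $\big(\tot\big(\sections{\Lambda^\bullet L\dual}\otimes_R\verticalTpoly{\bullet}\big),\liederivative{Q},\schouten{\argument}{\argument}\big)$ isomorphic, via $\etendu{\perturbed{\tau}}$, to $\big(\tot\sections{\Lambda^\bullet A\dual\otimes\Lambda^{\bullet+1}B},\dabott,\schouten{\argument}{\argument}\big)$. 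The mechanism behind this identity --- and the precise place where the matched-pair hypothesis is essential --- is the flatness of the Poincar\'e--Birkhoff--Witt $L$-connection $\cn$ together with the fact that $B$ is a Lie subalgebroid of $L$: the map $l\mapsto\cn_l$ is a morphism of Lie algebras from $\sections{L}$ to the vertical vector fields $\verticalTpoly{0}$ (whose bracket is the weight-zero part of the Schouten bracket of $\cF$), so $\schouten{\cn_b}{\cn_{b'}}=\cn_{\schouten{b}{b'}_L}$, and since $\schouten{\sections{B}}{\sections{B}}_L=\schouten{\sections{B}}{\sections{B}}_B\subseteq\sections{B}$ the family $\{\cn_b:b\in\sections{B}\}$ is a Lie subalgebra canonically isomorphic to $\big(\sections{B},\schouten{\argument}{\argument}_B\big)$ --- whereas for a general Lie pair $\schouten{\sections{B}}{\sections{B}}_L$ acquires a nonzero $\sections{A}$-component and no such subalgebra is available. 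Concretely I would identify $\etendu{\perturbed{\tau}}$ as the Fedosov ``Taylor expansion'' map, sending $b\in\Tpoly{0}=\sections{B}$ to the $\cn$-determined horizontal lift of the constant vertical vector field attached to $b$ (whose leading term along the filtration is $\etendu{\tau}(b)$), and sending a general $\alpha\otimes b_0\wedge\cdots\wedge b_q$ to the corresponding exterior product; the displayed identity then reduces, by the biderivation property of the two Schouten brackets, to $\schouten{\cn_b}{\cn_{b'}}=\cn_{\schouten{b}{b'}_B}$ and to the compatibility of $\etendu{\perturbed{\tau}}$ with $\dabott$ (already part of Proposition~\ref{Bagnolet}) on the exterior-algebra coefficients.

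Granting the displayed identity, assertions (i) and (ii) are formal. The contraction of Proposition~\ref{Bagnolet} satisfies the side conditions $\etendu{\sigma}\circ\etendu{\perturbed{\tau}}=\id$ and $\etendu{\perturbed{h}}\circ\etendu{\perturbed{\tau}}=0$ (the latter because $\etendu{h}\circ\etendu{\tau}=0$ and $\etendu{h}^2=0$, recorded in Section~\ref{Fedosov dg abd}, propagate through the perturbation series). Hence in the tree formula for the transferred $\ell_n$ every summand containing at least one internal edge evaluates to $0$: inducting on the tree, each subtree evaluates into the image of $\etendu{\perturbed{\tau}}$ --- since that image is closed under the Schouten bracket by the displayed identity --- and $\etendu{\perturbed{h}}$ annihilates that image; so every tree contributing to $\ell_n$ with $n\geqslant 3$ vanishes, proving (ii), while the unique tree for $n=2$ gives $\ell_2(x,y)=\etendu{\sigma}\schouten{\etendu{\perturbed{\tau}}(x)}{\etendu{\perturbed{\tau}}(y)}=\etendu{\sigma}\etendu{\perturbed{\tau}}\big(\schouten{x}{y}\big)=\schouten{x}{y}$, proving (i). The assertion about the cohomology then follows by passing to cohomology and comparing the induced associative products through the dg algebra morphism $\etendu{\sigma}$, exactly as in Proposition~\ref{Bari}. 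The main obstacle is the middle step: pinning down the explicit form of $\etendu{\perturbed{\tau}}$ and verifying the displayed identity --- i.e.\ that in the matched-pair case the Fedosov Taylor expansion carries the Schouten bracket of $\cB$ strictly onto that of $\cF$ --- which is where one genuinely uses that $B$ is a Lie subalgebroid.
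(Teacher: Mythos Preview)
Your overall strategy coincides with the paper's: both reduce Theorem~\ref{thm:Bonn} to the single assertion that $\etendu{\perturbed{\tau}}$ is a strict morphism of Lie--Rinehart algebras (the paper states this as Proposition~\ref{pro:Geneva} and Lemma~\ref{Wuxi}), and then invoke homotopy transfer. Your deduction of (i) and (ii) from the displayed identity via the side conditions $\etendu{\perturbed{h}}\circ\etendu{\perturbed{\tau}}=0$ and the tree formula is correct and in fact more explicit than the paper, which simply says the theorem ``essentially follows'' from Proposition~\ref{pro:Geneva}.

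The gap is in your proposed mechanism for the displayed identity itself. You write that ``the map $l\mapsto\cn_l$ is a morphism of Lie algebras from $\sections{L}$ to the vertical vector fields $\verticalTpoly{0}$'', but this is false: $\cn_l$ is a covariant derivative with symbol $\rho(l)$, hence is \emph{not} $R$-linear and does not lie in $\verticalTpoly{0}=\sections{\hat{S}B\dual\otimes B}$. Consequently your claimed reduction of $\schouten{\etendu{\perturbed{\tau}}(b)}{\etendu{\perturbed{\tau}}(b')}=\etendu{\perturbed{\tau}}\schouten{b}{b'}$ to the flatness relation $\schouten{\cn_b}{\cn_{b'}}=\cn_{\schouten{b}{b'}}$ is not justified: the two brackets live in different spaces and you have not explained how one controls the other. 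You yourself flag this step as ``the main obstacle'', and indeed it is not resolved by the argument you sketch.

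The paper closes this gap by a uniqueness argument rather than an explicit identification. Using the matched-pair hypothesis, one decomposes $Q=Q^{0,1}+Q^{1,0}$ with $Q^{0,1}$ raising the $\Lambda A\dual$-degree and $Q^{1,0}$ raising the $\Lambda B\dual$-degree (Lemma following Lemma~\ref{lem:Suzhou}); one shows that the image of $\etendu{\perturbed{\tau}}$ lies entirely in the $\Lambda^0 B\dual$-component (Lemma~\ref{lem:Suzhou}); and one proves that any element of that component which is $\liederivative{Q^{1,0}}$-closed and killed by $\etendu{\sigma}$ must vanish (Lemma~\ref{lem:Dijon}). The difference $\alpha=\etendu{\perturbed{\tau}}\schouten{x}{y}-\schouten{\etendu{\perturbed{\tau}}(x)}{\etendu{\perturbed{\tau}}(y)}$ is then easily seen to satisfy both hypotheses (the second requires the direct check $\etendu{\sigma}\schouten{\etendu{\perturbed{\tau}}(x)}{\etendu{\perturbed{\tau}}(y)}=\schouten{x}{y}$), forcing $\alpha=0$. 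This is where the matched-pair hypothesis enters the paper's argument: it is needed for Lemma~\ref{lem:Suzhou} and for the bigrading $Q=Q^{0,1}+Q^{1,0}$, not via the flatness of $\cn$ as you suggest.
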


Theorem~\ref{thm:Bonn}
is a direct consequence of \cite[Proposition~4.9]{MR4091493}
--- see also \cite[Theorem~4.20]{MR4091493}.
However, for the sake of completeness, we proceed to outline a direct proof.

Denote by $\verticalX(B)$ the space of formal vertical vector fields
on the vector bundle $B\to M$. We have the natural identification
\[ \verticalX(B)\cong\sections{\hat{S}(B\dual)\otimes B} .\]

Since $\Tpoly{-1}=C^\infty(M)$; $\Tpoly{0}=\sections{B}$;
$\verticalTpoly{-1}\cong\sections{\hat{S}(B\dual)}$;
and $\verticalTpoly{0}\cong\verticalX(B)$,
specializing the contraction of Corollary~\ref{Smith}
in the cases where $k=-1$ and $k=0$ yields
a pair of contractions:
\begin{equation}\label{eq:tau0}
\begin{tikzcd}[cramped]
\Big(\wa, d_A \Big)
\arrow[r, "{\perturbed{\tau}}", shift left] &
\Big( \sections{\Lambda^\bullet L\dual\otimes\hat{S}(B\dual)}, Q \Big)
\arrow[l, "{\sigma}", shift left]
\arrow[loop, "{\perturbed{h}}",out=5,in=-5,looseness = 3]
\end{tikzcd}
\end{equation}
and
\begin{equation}\label{eq:tau1}
\begin{tikzcd}[cramped]
\Big( \sections{\Lambda^\bullet A\dual\otimes B}, d_A^{\Bott}\Big)
\arrow[r, "\etendu{\perturbed{\tau}}", shift left] &
\Big(\sections{\Lambda^\bullet L\dual}\otimes_R\verticalX(B),\lie{Q}\Big)
\arrow[l, "\etendu{\sigma}", shift left]
\arrow[loop, "\etendu{\perturbed{h}}",out=5,in=-5,looseness = 3]
\end{tikzcd}
\end{equation}

Note that
$\big(\sections{\Lambda^\bullet A\dual\otimes B},\dabott\big)$
is a dg Lie--Rinehart algebra over the dg ring
$\big(\wa,d_A\big)$
while
$\big(\sections{\Lambda^\bullet L\dual}\otimes_R\verticalX(B),\lie{Q}\big)$
is a dg Lie--Rinehart algebra over the dg ring
$\big(\sections{\Lambda^\bullet L\dual\otimes\hat{S}(B\dual)},Q\big)$.

\begin{proposition}\label{pro:Geneva}
The pair of maps $\perturbed{\tau}$ and $\etendu{\perturbed{\tau}}$
in the contractions \eqref{eq:tau0} and \eqref{eq:tau1}
constitutes a morphism of dg Lie--Rinehart algebras
from $\big(\sections{\Lambda^\bullet A\dual\otimes B},\dabott\big)$
to $\big(\sections{\Lambda^\bullet L\dual}\otimes_R\verticalX(B),\lie{Q}\big)$.
\end{proposition}

Recall that $\etendu{\perturbed{\tau}}=\sum_{l=0}^\infty
(\etendu{h}\lie{\perturbation})^l\etendu{\tau}$
where \[ \perturbation=d_L^\nabla+X^\nabla
=(d^{\Bott}_A+d^{\nabla^{1,0}}_B)+(X^{0,1}+X^{1,0}) ,\]
since $L=A\oplus B$ is a matched pair.
It is simple to see that
\begin{equation}\label{Naypyitaw}
\lie{d^{\Bott}_A+X^{0,1}}
\big(\sections{\Lambda^\bullet A\dual\otimes\Lambda^0 B\dual}
\otimes_R\verticalX(B)\big)\subset
\sections{\Lambda^{\bullet+1} A\dual\otimes\Lambda^0 B\dual}
\otimes_R\verticalX(B)\subset
\ker\etendu{h}
\end{equation}
and
\[ \lie{d^{\nabla^{1,0}}_B+X^{1,0}}
\big(\sections{\Lambda^\bullet A\dual\otimes\Lambda^0 B\dual}
\otimes_R\verticalX(B)\big)\subset
\sections{\Lambda^\bullet A\dual\otimes\Lambda^1 B\dual}
\otimes_R\verticalX(B) .\]
Therefore, the operator $\etendu{h}\lie{\perturbation}$
stabilizes
$\sections{\Lambda^\bullet A\dual\otimes\Lambda^0 B\dual}
\otimes_R\verticalX(B)$
and we can conclude that
\begin{equation}\label{eq:tauTpoly}
\etendu{\perturbed{\tau}}\big(\sections{\Lambda^\bullet A\dual
\otimes B}\big)\subset\sections{\Lambda^\bullet A\dual
\otimes\Lambda^0 B\dual}\otimes_R\verticalX(B).
\end{equation}
Since $\etendu{\perturbed{\tau}}$ is a cochain map,
we have $\lie{Q}\circ\etendu{\perturbed{\tau}}=\etendu{\perturbed{\tau}}\circ d^{\Bott}_A$,
and it follows that
\begin{equation}\label{eq:NAP}
\lie{Q^{0,1}}\circ\etendu{\perturbed{\tau}}
=\etendu{\perturbed{\tau}}\circ d^{\Bott}_A
\qquad\text{and}\qquad
\lie{Q^{1,0}}\circ\etendu{\perturbed{\tau}}=0
,\end{equation}
where $Q^{0,1}$ and $Q^{1,0}$ are the vector fields defined
in Lemma~\ref{Qpq}.

\begin{proof}[Proof of Proposition~\ref{pro:Geneva}]
It suffices to verify that the pair of maps
$\perturbed{\tau}$ and $\etendu{\perturbed{\tau}}$
in the contractions \eqref{eq:tau0} and \eqref{eq:tau1}
satisfy the identities
\begin{gather}
\bracket{\etendu{\perturbed{\tau}}(\xi\otimes b)}{\etendu{\perturbed{\tau}}(\eta\otimes c)}
=\etendu{\perturbed{\tau}}\bracket{\xi\otimes b}{\eta\otimes c} \label{eq:wuxi2} \\
\bracket{\etendu{\perturbed{\tau}}(\xi\otimes b)}{\perturbed{\tau}(\eta)}
=\perturbed{\tau}\bracket{\xi\otimes b}{\eta} \label{eq:wuxi3} \\
\perturbed{\tau}(\eta)\cdot\etendu{\perturbed{\tau}}(\xi\otimes b)
=\etendu{\perturbed{\tau}}(\eta\cdot\xi\otimes b) \label{eq:wuxi1}
\end{gather}
for all $\xi,\eta\in\wa$ and $b,c\in\sections{B}$.
In Equations~\eqref{eq:wuxi2} and~\eqref{eq:wuxi3}, the brackets on the r.h.s.\
are Schouten brackets of polyvector fields on the dg Lie algebroid
$\cB$, while the brackets on the l.h.s.\ are Schouten brackets
of polyvector fields on the dg Lie algebroid $\cF$.

Consider \[ \mathscr{Y}=\bracket{\etendu{\perturbed{\tau}}(\xi\otimes b)}
{\etendu{\perturbed{\tau}}(\eta\otimes c)} .\]
It follows from~\eqref{eq:tauTpoly} that
$\mathscr{Y}\in\sections{\Lambda^\bullet A\dual\otimes\Lambda^0 B\dual}\otimes_R\verticalX(B)$
and thence $\etendu{h}(\mathscr{Y})=0$.
Since $Q^{0,1}=\dabott+X^{0,1}$, according to \eqref{Naypyitaw} we
also get $\etendu{h}\lie{Q^{0,1}}(\mathscr{Y})=0$.
Furthermore, from Equation~\eqref{eq:NAP}, we obtain
\begin{multline*}
\lie{Q^{1,0}}\mathscr{Y}=\lie{Q^{1,0}}
\big(\bracket{\etendu{\perturbed{\tau}}(\xi\otimes b)}
{\etendu{\perturbed{\tau}}(\eta\otimes c)}\big) \\
=\bracket{\lie{Q^{1,0}}\big(\etendu{\perturbed{\tau}}(\xi\otimes b)\big)}
{\etendu{\perturbed{\tau}}(\eta\otimes c)}
\pm\bracket{\etendu{\perturbed{\tau}}(\xi\otimes b)}
{\lie{Q^{1,0}}\big(\etendu{\perturbed{\tau}}(\eta\otimes c)\big)}=0
.\end{multline*}
Therefore, we conclude that
\[ \etendu{h}\lie{Q}(\mathscr{Y})
=\etendu{h}\lie{Q^{0,1}}(\mathscr{Y})
+\etendu{h}\lie{Q^{1,0}}(\mathscr{Y})=0 .\]

From the definitions \eqref{eq:etendusigma} and~\eqref{eq:perturbedetendutau}
(see also \cite[Lemma~4.13]{MR4091493}) of $\etendu{\sigma}$
and $\etendu{\perturbed{\tau}}$, we obtain
\[ \etendu{\sigma}(\mathscr{Y})=
\etendu{\sigma}\bracket{\etendu{\perturbed{\tau}}(\xi\otimes b)}
{\etendu{\perturbed{\tau}}(\eta\otimes c)}
=\bracket{\xi\otimes b}{\eta\otimes c} .\]

Since $\etendu{h}(\mathscr{Y})=0$; $\etendu{h}\lie{Q}(\mathscr{Y})=0$;
and $\etendu{\sigma}(\mathscr{Y})=\bracket{\xi\otimes b}{\eta\otimes c}$,
it follows from Proposition~\ref{Mandalay} that
\[ \mathscr{Y}=\etendu{\perturbed{\tau}}
\big(\bracket{\xi\otimes b}{\eta\otimes c}\big) .\]
Identity~\eqref{eq:wuxi2} is thus established.
Identities~\eqref{eq:wuxi3} and~\eqref{eq:wuxi1}
can be verified in a similar fashion.
\end{proof}

We are now ready to prove Theorem~\ref{thm:Bonn}.

\begin{proof}[Proof of Theorem~\ref{thm:Bonn}]
Since $L=A\bowtie B$ is a matched pair,
the cochain complex on the l.h.s.\ of~\eqref{eq:Bagnolet}
in Proposition~\ref{Bagnolet} is
$\tot\sections{\Lambda^\bullet A\dual\otimes\Lambda^{\diamond+1}B}$.
It suffices to prove that the injection
$\etendu{\perturbed{\tau}}$ in~\eqref{eq:Bagnolet}
is a morphism of Lie algebras,
where the Lie bracket on
$\tot\sections{\Lambda^\bullet A\dual\otimes\Lambda^{\diamond+1}B}$
is as in Proposition~\ref{pro:zurich}.
This follows immediately from Proposition~\ref{pro:Geneva}
and the fact that $\etendu{\perturbed{\tau}}$ respects wedge products
by virtue of Proposition~\ref{Bagnolet}.
\end{proof}

\subsection{Polydifferential operators associated with matched pairs}

We now turn to the study of polydifferential operators.

Recall that the universal enveloping algebra $\enveloping{\cB}$
of the dg Lie algebroid $\cB\to A[1]$ is a dg Hopf algebroid
over the dgca $C^\infty(A[1])=\wa$.
There is a natural isomorphism of left
$\wa$-modules
\begin{equation}\label{eq:PSU}
\enveloping{\cB}\isomorphism
\wa\otimes_R\enveloping{B}
.\end{equation}
Consequently,
$\wa\otimes_R\enveloping{B}$
admits a structure of dg Hopf algebroid over the dgca
$\big(\wa,d_A\big)$:

\begin{enumerate}
\item The multiplication
is characterized by the relations
\begin{align*}
(\xi\otimes 1)\cdot(\eta\otimes 1) &= \xi\wedge\eta\otimes 1,
&&\forall \xi,\eta\in\wa; \\
(1\otimes u)\cdot(1\otimes v) &= 1\otimes u\cdot v,
&&\forall u,v\in\enveloping{B}; \\
(\xi\otimes 1)\cdot(1\otimes u) &= \xi\otimes u,
&&\forall \xi\in\wa,\
\forall u\in\enveloping{B}; \\
(1\otimes b)\cdot(\xi \otimes 1)-(\xi \otimes 1)\cdot(1\otimes b)
&= (\nabla^{\Bott}_b\xi )\otimes 1,
&&\forall b\in\sections{B},\ \forall \xi \in\sections{A\dual}
.\end{align*}
Indeed, the multiplication is defined by the relation
\begin{equation}\label{eq:AUB}
(\xi\otimes b_1b_2\cdots b_n)\cdot(\eta\otimes u)
= \sum_{k=0}^n\sum_{\sigma\in\shuffle{k}{n-k}}
(\xi\wedge\nabla^{\Bott}_{b_{\sigma(1)}}\cdots\nabla^{\Bott}_{b_{\sigma(k)}}\eta)
\otimes b_{\sigma(k+1)}\cdots b_{\sigma(n)}\cdot u ,
\end{equation}
for all $\xi,\eta\in\wa$,
$b_1,b_2,\dots,b_n\in\sections{B}$, and $u\in\enveloping{B}$.
Note that the multiplication is well-defined
by Equation~\eqref{eq:AUB} because the Bott $B$-connection
on $A\dual$ is flat.
\item The source and target maps
\[ \begin{tikzcd}
\wa \arrow[r, shift left, "\alpha"]
\arrow[r, shift right, "\beta"'] & \wa\otimes_R\enveloping{B}
\end{tikzcd} \]
are one and the same map: the inclusion $\xi\mapsto\xi\otimes 1$.
\item The differential is the Chevalley--Eilenberg differential
\[ \dau: \wa\otimes_R\enveloping{B}
\to\sections{\Lambda^{\bullet+1} A\dual}\otimes_R\enveloping{B} \]
of the Lie algebroid $A$ for cochains with coefficients
in $\enveloping{B}$.
The $A$-module structure on $\enveloping{B}$ follows
from the canonical identification \eqref{eq:BLA}
--- the Lie algebroid $A$ acts on $\enveloping{L}$ by multiplication from the left.
\item The comultiplication $\Delta$ is defined
by the commutative diagram of left $\wa$-modules
\[ \begin{tikzcd}[row sep=small]
& \big( \wa\otimes_R \enveloping{B}\big)
\otimes_{\wa}
\big( \wa\otimes_R \enveloping{B}\big)
\arrow{dd}{\isomorphism} \\
\wa\otimes_R \enveloping{B}
\arrow{ru}[near start]{\Delta} \arrow{rd}[swap]{\id\otimes \tilde{\Delta} } & \\
& \wa\otimes_R \enveloping{B}\otimes_R \enveloping{B}
. \end{tikzcd} \]
Indeed, it is the $\wa$-linear extension of the comultiplication
\[ \widetilde{\Delta}:\enveloping{B}
\to\enveloping{B}\otimes_R\enveloping{B} \]
of the Hopf algebroid $\enveloping{B}$ --- see~\cite{MR1815717}.
\item The counit map
$\varepsilon:\sections{\Lambda^\bullet A^\vee}\otimes_R\enveloping{B}
\to\sections{\Lambda^\bullet A^\vee}$ is the canonical projection.
\end{enumerate}

From the isomorphism \eqref{eq:PSU},
we obtain an isomorphism
\begin{equation}\label{eq:CDG}
\big(\suspended\enveloping{\cB}\big)^{\otimes k+1}\isomorphism
\wa\otimes_R\enveloping{B}^{\otimes k+1}[-k-1]
,\end{equation}
which identifies (up to a grading shift) the differential
$\cQ:\suspended\enveloping{\cB}^{\otimes k+1}
\to\suspended\enveloping{\cB}^{\otimes k+1}$
to the Chevalley--Eilenberg differential
\begin{equation}\label{eq:dAUB}
\dau: \wa
\otimes_R\enveloping{B}^{\otimes k+1}
\to\sections{\Lambda^{\bullet+1} A\dual}
\otimes_R\enveloping{B}^{\otimes k+1}
.\end{equation}
Here $\enveloping{B}^{\otimes k+1}$ with $k\geqslant -1$ denotes
the tensor product $\enveloping{B}\otimes_R \cdots\otimes_R \enveloping{B}$
of $(k+1)$-copies of the left $R$-module $\enveloping{B}$,
and the $A$-module structure on $\enveloping{B}^{\otimes k+1}$
is the natural extension of the $A$-module structure on $\enveloping{B}$.

The Hochschild coboundary differential \eqref{eq:hochschild},
the Gerstenhaber bracket \eqref{eq:Gbraket},
and the cup product \eqref{eq:cup} on
$\tot_\oplus\suspended\enveloping{\cB}^{\otimes\bullet+1}$
arising from the dg Lie algebroid $\cB$
carry over, through the identification \eqref{eq:CDG},
to a Hochschild coboundary differential
\begin{equation}\label{eq:hochschild03}
\wa\otimes_R
\enveloping{B}^{\otimes k}
\xto{\mathfrak{d}_{\mathscr{H}}}
\sections{\Lambda^{\bullet}A\dual}\otimes_R
\enveloping{B}^{\otimes k+1}
,\end{equation}
a Gerstenhaber bracket
\begin{equation}\label{eq:Gbracket03}
\big(\wa\otimes_R
\enveloping{B}^{\otimes p+1}\big)\otimes
\big(\wa\otimes_R
\enveloping{B}^{\otimes q+1}\big)
\xto{\gerstenhaber{\argument}{\argument}}
\sections{\Lambda^{\bullet} A\dual}\otimes_R
\enveloping{B}^{\otimes p+q+1}
,\end{equation}
and a cup product
\begin{equation}\label{eq:cup03}
\big(\wa\otimes_R
\enveloping{B}^{\otimes p+1}\big)\otimes
\big(\wa\otimes_R
\enveloping{B}^{\otimes q+1}\big) \xto{\smile}
\sections{\Lambda^{\bullet} A\dual}\otimes_R
\enveloping{B}^{\otimes(p+q+1)+1}
\end{equation}
on $\wa\otimes_R\enveloping{B}^{\otimes\diamond+1}$.

Note that both the Hochschild coboundary differential and
the cup product are $\wa$-linear.
That is, we have
\[ \mathfrak{d}_{\mathscr{H}}(\omega\otimes u)
=(-1)^k\omega\otimes\hochschild(u) \]
and
\[ (\omega\otimes u)\smile(\theta\otimes v)
=(-1)^{l(p+1)}(\omega\wedge\theta)\otimes(u\otimes v) \]
for all $\omega\in\sections{\Lambda^{k}A\dual}$,
$\theta\in\sections{\Lambda^{l}A\dual}$,
$u\in\enveloping{B}^{\otimes p+1}$ and $v\in\enveloping{B}^{\otimes q+1}$.
However, the Gerstenhaber bracket \eqref{eq:Gbracket03}
is \emph{not} the obvious extension of the Gerstenhaber bracket
on $\enveloping{B}^{\otimes\bullet+1}$ obtained by tensoring
with the commutative associative algebra $\wa$.
In fact, to write down an explicit formula --- which is quite involved ---
one must use the Bott representation of $B$ on $\Lambda A\dual$.

Applying Proposition~\ref{pro:hongkong1}
to the dg Lie algebroid $\cB\to A[1]$,
we are led to the following
\begin{proposition}\label{pro:zurich1}
Let $A\bowtie B$ be a matched pair of Lie algebroids.
\begin{enumerate}
\item When endowed with the differential $\dhoch$
(see~\eqref{eq:dAUB} and~\eqref{eq:hochschild03})
and the Gerstenhaber bracket \eqref{eq:Gbracket03},
$\tot\big(\wa\otimes_R\enveloping{B}^{\otimes\diamond+1}\big)$ is a dgla.
\item When endowed with the cup product \eqref{eq:cup03}
and the Gerstenhaber bracket \eqref{eq:Gbracket03},
the Hochschild cohomology $\hypercohomology^\bullet_{\CE}
\big(A,{\enveloping{B}}^{\otimes\diamond+1}\big)$,
i.e.\ the cohomology of the complex
\[ \big(\tot(\Lambda^\bullet A\dual\otimes_R
\enveloping{B}^{\otimes\diamond+1}),\dhoch\big) ,\]
is a Gerstenhaber algebra.
\end{enumerate}
\end{proposition}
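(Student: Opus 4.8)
The plan is to recognise the triple in the statement as nothing but the dgla of polydifferential operators attached, via Proposition~\ref{pro:hongkong1}, to the dg Lie algebroid $\caA=(A[1]\oplus B,\dabott)\to(A[1],\dace)$ produced by the matched pair (the corollary recalled just above), and then merely to transport that structure along the identifications assembled in the paragraphs preceding the statement. Thus no new construction is involved; the work is entirely one of matching names.

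First I would apply Proposition~\ref{pro:hongkong1} to $\caA\to A[1]$: its universal enveloping algebra $\enveloping{\caA}$ is a dg Hopf algebroid over $\cR=C^\infty(A[1])=\sections{\Lambda^\bullet A\dual}$, so $\big(\enveloping{\caA}^{\otimes\bullet+1},\mathcal{Q}+\hochschild,\gerstenhaber{\argument}{\argument}\big)$ is a dgla whose Hochschild cohomology, equipped with the cup product $\otimes_\cR$ and the Gerstenhaber bracket, is a Gerstenhaber algebra. It then only remains to match this data with the objects named in the statement.

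For that I would invoke the explicit description of $\enveloping{\caA}\cong\sections{\Lambda^\bullet A\dual}\otimes_R\enveloping{B}$ recorded in items~(1)--(4) above. By $\cR$-linearity of $\otimes_\cR$ one obtains the canonical identification $\enveloping{\caA}^{\otimes k+1}\cong\sections{\Lambda^\bullet A\dual}\otimes_R\enveloping{B}^{\otimes k+1}$, under which the cup product becomes concatenation of the $\enveloping{B}$-factors together with the wedge product on $\sections{\Lambda^\bullet A\dual}$. By item~(3) the differential $\mathcal{Q}$ becomes the Chevalley--Eilenberg differential $\dau$ of~\eqref{eq:AUB}; by item~(4) the comultiplication of $\enveloping{\caA}$ is $\id\otimes\Delta_{\enveloping{B}}$, and feeding this into the defining formula~\eqref{hola} shows that the Hochschild differential $\hochschild$ of $\enveloping{\caA}^{\otimes\bullet+1}$ becomes $\mathfrak{d}_{\mathscr{H}}=\id\otimes\hochschild$ on $\sections{\Lambda^\bullet A\dual}\otimes_R\enveloping{B}^{\otimes\bullet+1}$, with the sign rule~\eqref{eq:Rome}. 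Hence $\mathcal{Q}+\hochschild=\dhoch$, so the Hochschild cohomology of $\big(\enveloping{\caA}^{\otimes\bullet+1},\mathcal{Q}+\hochschild\big)$ is by definition $\hypercohomology^\bullet_{\CE}(A,\enveloping{B}^{\otimes\bullet+1})$, the cohomology of $\big(\tot(\sections{\Lambda^\bullet A\dual}\otimes_R\enveloping{B}^{\otimes\bullet+1}),\dhoch\big)$. Transporting the Gerstenhaber bracket~\eqref{hazmat} of $\enveloping{\caA}$ (which one may also write out explicitly from the dg Hopf algebroid structure of items~(1)--(4)) along the same identification produces the bracket of the statement, and Proposition~\ref{pro:hongkong1} then yields both assertions at once.

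The main obstacle is nothing conceptual but rather a careful audit of the Koszul signs: one must check that the simplicial face maps entering $\hochschild$ --- the outer insertions $1\otimesrr(\argument)$ and $(\argument)\otimesrr 1$ and the middle insertions $\Delta(u_i)$ --- are, after the $\sections{\Lambda^\bullet A\dual}$ factor has been pulled to the front and the $\cR$-bilinear structure untangled, exactly those dictated by~\eqref{hola} and~\eqref{eq:Rome}, and likewise that the anticommutation $\dau\circ\mathfrak{d}_{\mathscr{H}}+\mathfrak{d}_{\mathscr{H}}\circ\dau=0$ holds with the correct signs; this last point, however, is automatic since $\mathcal{Q}+\hochschild$ is already a differential by Proposition~\ref{pro:hongkong1}. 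No geometric input beyond the matched-pair dg Lie algebroid $\caA\to A[1]$ and the identifications already in place is needed.
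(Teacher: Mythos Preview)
Your proposal is correct and follows exactly the approach the paper intends: the proposition is stated without proof precisely because it is the immediate specialisation of Proposition~\ref{pro:hongkong1} to the dg Lie algebroid $\caA\to A[1]$, transported through the identifications $\enveloping{\caA}^{\otimes k+1}\cong\sections{\Lambda^\bullet A\dual}\otimes_R\enveloping{B}^{\otimes k+1}$ and $\mathcal{Q}=\dau$ established in the paragraphs immediately preceding it. Your audit of the Hochschild differential via item~(4) and of the signs is more explicit than anything the paper writes down, but it is the same argument.
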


As pointed out in Remark~\ref{rk:compatible}, on the cochain level,
the Gerstenhaber bracket \eqref{eq:Gbracket03} satisfies
the graded Leibniz rule with respect to the cup product
\eqref{eq:cup03} only up to homotopy. Therefore,
$\tot\big(\wa\otimes_R
\enveloping{B}^{\otimes\diamond+1}\big)$ is \emph{not}
a differential Gerstenhaber algebra.
Likewise, the cup product is graded commutative only up to homotopy.
Again this is reminiscent of ordinary Hochschild cohomology theory
of associative algebras~\cite{MR0171807}.

Theorem~\ref{thm:Berlin} below is the second main result of the present section,
the remainder of which is devoted to its proof.

\begin{theorem}\label{thm:Berlin}
Let $L=A\bowtie B$ be a matched pair and let $\nabla$ be a torsion-free
$L$-connection on $B$.
Then the $L_\infty$ algebra
$\tot\big(\wa\otimes_R\Dpoly{\diamond}\big)$
and the Gerstenhaber algebra
$\hypercohomology^\bullet_{\CE}(A,\Dpoly{\diamond})$
of Proposition~\ref{thm:Naples} coincide with the dgla
$\tot\big(\wa\otimes_R\enveloping{B}^{\diamond+1}\big)$
and the Gerstenhaber algebra
$\hypercohomology^\bullet_{\CE}(A,\enveloping{B}^{\diamond+1})$
of Proposition~\ref{pro:zurich1}, respectively.
\end{theorem}

Denote by $\verticalD(B)$ the algebra of formal vertical
differential operators on the vector bundle $B\to M$.
The canonical isomorphism \eqref{eq:varphi},
specialized to the case $k=0$, gives the identification
\[ \verticalD(B)\cong\Gamma\big(\hat{S}(B\dual)\otimes SB\big) .\]
Consider the contraction \eqref{eq:Dhaka1} in Corollary~\ref{cor:Dhaka}.
As an immediate consequence of isomorphism \eqref{eq:BLA},
we have $\Dpoly{k}\cong\enveloping{B}^{\otimes k+1}$.
We also have isomorphism \eqref{eq:varphi}:
$\verticalDpoly{k}\cong\sections{\hat{S}(B\dual)\otimes(SB)^{\otimes k+1}}$.
Specializing Corollary~\ref{cor:Dhaka} to the case $k=0$,
we obtain the contraction
\begin{equation}\label{eq:Metz}
\begin{tikzcd}
\Big(\wa\otimes_R\enveloping{B},\dau\Big)
\arrow[r, shift left, "\etendu{\perturbed{\tau}}"] &
\Big(\sections{\Lambda^\bullet L\dual}\otimes_R
\verticalD(B),\gerstenhaber{Q}{\argument}\Big)
\arrow[l, shift left, "\etendu{\sigma}"]
\arrow[loop, "\etendu{\perturbed{h}}", out=5,in=-5,looseness = 3]
\end{tikzcd}
\end{equation}

Likewise, specializing Proposition~\ref{Mandalay2} to the case of
differential (rather than polydifferential) operators, we obtain
\begin{proposition}\label{Mandalay3}
Given $x\in\wa\otimes_R\enveloping{B}$
and $y\in\sections{\Lambda^\bullet L\dual}
\otimes_R\verticalD(B)$,
we have
\[ \etendu{\perturbed{\tau}}(x)=y
\qquad\text{if and only if}\qquad
\begin{cases}
\etendu{h}(y) = 0 \\
\etendu{h}\big(\gerstenhaber{Q}{y}\big) = 0 \\
\etendu{\sigma}(y) = x
\end{cases} \]
\end{proposition}

Both sides of~\eqref{eq:Metz} are universal enveloping algebras
of dg Lie algebroids.
Indeed, isomorphism \eqref{eq:PSU} identifies
$\wa\otimes_R\enveloping{B}$
with the universal enveloping algebra $\enveloping{\cB}$
of the dg Lie algebroid $\cB\to A[1]$, while
$\sections{\Lambda^\bullet L\dual}\otimes_R\verticalD(B)$
is naturally identified with the universal enveloping algebra
of the Fedosov dg Lie algebroid $\cF\to\cM$
(appearing in Proposition~\ref{pro:Rome})
since $\cF$ is isomorphic to the pull back bundle $\pr^* T_{\ver}B$.

Following \cite[Definition 2.6]{MR1913813},
we consider the associative algebra \[ \jet{B}:=\Hom_R\big(\enveloping{B},R\big) \]
of $B$-jets on $M$ --- the multiplication on $\jet{B}$ arises as the map
dual to the comultiplication on $\enveloping{B}$.

Dualizing the isomorphism of $R$-coalgebras
\[ \pbwb:\sections{SB}\xto{\cong}\enveloping{B} \]
appearing in Lemma~\ref{pbwB}, we obtain
an isomorphism of associative $R$-algebras
\begin{equation}\label{eq:pbwBT}
\pbwb^\top:\jet{B}\xto{\cong}\sections{\hat{S}(B\dual)}
.\end{equation}

The isomorphism $\pbwb^\top$ identifies
the \emph{Grothendieck $B$-connection $\nablat^G$ on $\jet{B}$}
introduced by Nest--Tsygan \cite[Proposition~2.7]{MR1913813}
and characterized by the relation
\[ \duality{\nablat^G_b\varphi}{u}
=\anchor_{b}\duality{\varphi}{u}-\duality{\varphi}{b\cdot u} ,\]
for all $b\in\sections{B}$; $\varphi\in\jet{B}$; and $u\in\enveloping{B}$,
with the $B$-connection $\cnt$ on $\sections{\hat{S}(B\dual)}$
dual to the flat $B$-connection on $\sections{SB}$
defined by Equation~\eqref{eq:cntb}.
Indeed, the diagram
\begin{equation}\label{eq:Gconnection}
\begin{tikzcd}
\jet{B} \arrow[r, "\nablat^G_b"] \arrow[d, "\isomorphism", "\pbwb^\top"']
& \jet{B} \arrow[d, "\isomorphism"', "\pbwb^\top"]
\\ \sections{\hat{S}(B\dual)} \arrow[r, "\cnt_b"']
& \sections{\hat{S}(B\dual)}
\end{tikzcd} \end{equation}
commutes for all $b\in\sections{B}$ since
\begin{align*}
\duality{\pbwb^\top(\nablat^G_b\varphi)}{s}
&= \duality{\nablat^G_b\varphi}{\pbwb(s)} \\
&= \anchor_{b}\duality{\varphi}{\pbwb(s)}-\duality{\varphi}{b\cdot\pbwb(s)} \\
&= \anchor_{b}\duality{\varphi}{\pbwb(s)}-\duality{\varphi}{\pbwb(\cnt_b s)} \\
&= \anchor_{b}\duality{\pbwb^\top(\varphi)}{s}-\duality{\pbwb^\top(\varphi)}{\cnt_b s} \\ &= \duality{\cnt_b\big(\pbwb^\top(\varphi)\big)}{s}
\end{align*}
for all $\varphi\in\jet{B}$ and $s\in\sections{SB}$.

Given $x\in\enveloping{B}$,
we think of the multiplication $u\mapsto u\cdot x$ by $x$
from the right in $\enveloping{B}$
as an endomorphism $\widetilde{R}_x$ of the $R$-module $\enveloping{B}$,
and we consider the dual endomorphism
\[ \widetilde{R}_x^\top:\jet{B}\to\jet{B} .\]

For all $b\in\sections{B}$ and $x\in\enveloping{B}$, we have
\[ \nablat^G_{b}\circ\widetilde{R}_x^\top=\widetilde{R}_x^\top\circ\nablat^G_{b} ,\]
since
\begin{multline*}
\duality{\nablat^G_{b}\circ\widetilde{R}_x^\top(\varphi)}{u}
=\anchor_b\duality{\widetilde{R}_x^\top(\varphi)}{u}
-\duality{\widetilde{R}_x^\top(\varphi)}{b\cdot u}
=\anchor_b\duality{\varphi}{u\cdot x}
-\duality{\varphi}{b\cdot u\cdot x} \\
=\duality{\nablat^G_{b}\varphi}{u\cdot x}
=\duality{\nablat^G_{b}\varphi}{\widetilde{R}_x(u)}
=\duality{\widetilde{R}_x^\top\circ\nablat^G_{b}(\varphi)}{u}
\end{multline*}
for all $\varphi\in\jet{B}$ and $u\in\enveloping{B}$.

\begin{lemma}\label{chipmunk}
For every $x\in\enveloping{B}$,
the endomorphism $\widetilde{R}_x^\top$ of $\jet{B}$
is an $R$-linear differential operator on the algebra $\jet{B}$.
Furthermore, the map $x\mapsto\widetilde{R}_x^\top$
is a morphism of associative algebras from $\enveloping{B}$
to the algebra of $R$-linear differential operators
acting on the algebra $\jet{B}$.
\end{lemma}

\begin{proof}
Adopting the Sweedler notation
\[ \Delta(u)=\sum_{(u)}u_{(1)}\otimes u_{(2)}=\sum_{(u)}u_{(2)}\otimes u_{(1)} \]
to denote the cocommutative comultiplication on $\enveloping{B}$ defined by
Equation~\eqref{eq:325}, and using the very definition of the multiplication
in $\enveloping{B}$ --- see relations \eqref{eq:four} or Equation~\eqref{eq:AUB} ---
we easily obtain that
\[ \widetilde{R}_f(u)=u\cdot f=\sum_{(u)}\anchor_{u_{(1)}}(f)\cdot u_{(2)}
=\sum_{(u)}\anchor_{u_{(2)}}(f)\cdot u_{(1)} \]
for all $f\in R\subset\enveloping{B}$ and $u\in\enveloping{B}$.
Here the anchor map $\anchor:\sections{B}\to\XX(M)$ of the Lie algebroid $B\to M$ was implicitly
extended to a morphism of associative algebras
$\anchor:\enveloping{B}\to\DD(M)$.

It follows that, for all $f\in C^\infty(M)\subset\enveloping{B}$,
$\xi\in\jet{B}$, and $u\in\enveloping{B}$, we have
\begin{multline*}
\duality{\widetilde{R}_f^\top(\xi)}{u}
=\duality{\xi}{\widetilde{R}_f(u)}
=\big\langle\xi\big|\sum_{(u)}\anchor_{u_{(2)}}(f)\cdot u_{(1)}\big\rangle
=\sum_{(u)}\duality{\xi}{u_{(1)}}\cdot\anchor_{u_{(2)}}(f) \\
=\sum_{(u)}\duality{\xi}{u_{(1)}}\cdot\anchor_{u_{(2)}\cdot f}(1)
=\sum_{(u)}\duality{\xi}{u_{(1)}}\cdot\duality{\boldsymbol{1}}{u_{(2)}\cdot f}
=\sum_{(u)}\duality{\xi}{u_{(1)}}\cdot\duality{\widetilde{R}_f^\top(\boldsymbol{1})}{u_{(2)}} \\
=\big\langle\xi\otimes\widetilde{R}_f^\top(\boldsymbol{1})\big|\sum_{(u)}u_{(1)}\otimes u_{(2)}\big\rangle
=\duality{\xi\otimes\widetilde{R}_f^\top(\boldsymbol{1})}{\Delta(u)}
=\duality{\Delta^\top\big(\xi\otimes\widetilde{R}_f^\top(\boldsymbol{1})\big)}{u} \\
=\duality{\xi\cdot\widetilde{R}_f^\top(\boldsymbol{1})}{u}
,\end{multline*}
where the $B$-jet $\boldsymbol{1}$ is the morphism
of left $R$-modules $\enveloping{B}\ni u\mapsto\anchor_{u}(1)\in R$
associated with the constant function $1\in R=C^\infty(M)$.
Hence, we have
\[ \widetilde{R}_f^\top(\xi)=\xi\cdot\widetilde{R}_f^\top(\boldsymbol{1}),
\qquad\forall f\in C^\infty(M),\;\forall\xi\in\jet{B} ,\]
which shows that $\widetilde{R}_f^\top$ is indeed a differential operator
of order zero on the algebra $\jet{B}$.

For every $b\in\sections{B}\subset\enveloping{B}$,
the operator $\widetilde{R}_b$ is a coderivation of the left $R$-coalgebra $\enveloping{B}$
and, consequently, $\widetilde{R}_b^\top$ is a derivation of the left $R$-algebra $\jet{B}$.

For any two elements $u,v\in\enveloping{B}$, we have
$\widetilde{R}_{uv}^\top=\widetilde{R}_{u}^\top\circ\widetilde{R}_{v}^\top$
since $\widetilde{R}_{uv}=\widetilde{R}_{v}\circ\widetilde{R}_{u}$.

Since the universal enveloping algebra $\enveloping{B}$
of the Lie algebroid $B\to M$ is generated multiplicatively
by the elements of its subspace $C^\infty(M)\oplus\sections{B}$,
it follows immediately that $\widetilde{R}_x^\top$ acts on the algebra of jets $\jet{B}$
in the manner of a differential operator.
The $R$-linearity of $\widetilde{R}_x^\top:\jet{B}\to\jet{B}$ is obvious.
\end{proof}

Similarly, we can consider the Lie algebroid $\cB\to A[1]$,
the graded associative algebra of $\cB$-jets on $A[1]$
\[ \jet{\cB}:=\Hom_{\wa}\big(\enveloping{\cB},\wa\big) ,\]
and the \emph{Grothendieck $\cB$-connection $\nabla^G$ on $\jet{\cB}$}
characterized by the relation
\[ \duality{\nabla^G_b\varphi}{u}=\baranchor_{b}\duality{\varphi}{u}
-\duality{\varphi}{b\cdot u} ,\]
for all $b\in\sections{\cB}$; $\varphi\in\jet{\cB}$; and $u\in\enveloping{\cB}$.

It follows from the natural identification of the space of sections
of the Lie algebroid $\cB\to A[1]$ with
$\sections{\Lambda^\bullet A\dual\otimes B}$;
the definition \eqref{eq:anchor} of the
anchor map $\baranchor$ of the Lie algebroid $\cB$;
and the isomorphism of graded associative algebras
\[ \jet{\cB}\isomorphism\wa\otimes_R\jet{B} \]
induced by the identification \eqref{eq:PSU} that
\begin{equation}\label{eq:G}
\nabla^G_{1\otimes b}(\alpha\otimes\varphi)
=(\nabla^{\Bott}_b\alpha)\otimes\varphi
+\alpha\otimes(\nablat^G_b\varphi)
,\end{equation}
for all $b\in\sections{B}$; $\alpha\in\sections{\Lambda A\dual}$;
and $\varphi\in\jet{B}$.

Given $x\in\enveloping{\cB}$,
we think of the multiplication $u\mapsto u\cdot x$ by $x$ from the right in $\enveloping{\cB}$
as an endomorphism $R_x$ of the $\wa$-module
$\enveloping{\cB}\isomorphism\wa\otimes_R\enveloping{B}$,
and we consider the dual endomorphism
\[ R_x^\top:\wa\otimes_R\jet{B}\to\wa\otimes_R\jet{B} .\]
The multiplication in $\enveloping{\cB}\isomorphism
\wa\otimes_R\enveloping{B}$
was defined in Equation~\eqref{eq:AUB}.

For all $b\in\sections{\cB}\isomorphism\sections{\Lambda A\dual\otimes B}$
and $x\in\enveloping{\cB}\isomorphism
\wa\otimes_R\enveloping{B}$, we have
\begin{equation}\label{eq:rlcommute}
\nablab_{b}\circ R_x^\top=R_x^\top\circ\nablab_{b}
,\end{equation}
since
\begin{multline*}
\duality{\nablab_{b}\circ R_x^\top(\varphi)}{u}
=\baranchor_b\duality{R_x^\top(\varphi)}{u}
-\duality{R_x^\top(\varphi)}{b\cdot u}
=\baranchor_b\duality{\varphi}{u\cdot x}
-\duality{\varphi}{b\cdot u\cdot x} \\
=\duality{\nablab_{b}\varphi}{u\cdot x}
=\duality{\nablab_{b}\varphi}{R_x(u)}
=\duality{R_x^\top\circ\nablab_{b}(\varphi)}{u}
\end{multline*}
for all $\varphi\in\jet{\cB}$ and $u\in\enveloping{\cB}$.

\begin{lemma}\label{hedgehog}
For every $x\in\enveloping{\cB}$,
the endomorphism $R_x^\top$ of $\jet{\cB}$
is a differential operator on the algebra $\jet{\cB}$.
Furthermore, the map $x\mapsto R_x^\top$
is a morphism of associative algebras from $\enveloping{\cB}$
to the algebra of $\wa$-linear differential operators
acting on the algebra $\jet{\cB}$.
\end{lemma}

The proof of Lemma~\ref{hedgehog} is similar to the proof of Lemma~\ref{chipmunk}
and is therefore omitted.

The following lemma indicates that $\etendu{\perturbed{\tau}}(x)$
coincides with the operator $R_x^\top$ conjugated by the algebra isomorphism
\[ \id\otimes\pbwb^\top : \wa\otimes_R\jet{B}
\to\sections{\Lambda^{\bullet}A\dual\otimes\hat{S}(B\dual)} .\]

\begin{lemma}\label{lem:tauR}
For all $x\in\enveloping{\cB}\isomorphism
\wa\otimes_R\enveloping{B}$, the diagram
\[ \begin{tikzcd}
\wa\otimes_R\jet{B} \arrow[r, "R_x^\top"] \arrow[d, "\isomorphism", "\id\otimes\pbwb^\top"']
& \wa\otimes_R\jet{B} \arrow[d, "\isomorphism"', "\id\otimes\pbwb^\top"]
\\ \sections{\Lambda^\bullet A\dual\otimes\hat{S}(B\dual)} \arrow[r, "\etendu{\perturbed{\tau}}(x)"']
& \sections{\Lambda^\bullet A\dual\otimes\hat{S}(B\dual)}
\end{tikzcd} \]
commutes.
\end{lemma}

\begin{proof}
Given any element $x$ of $\wa\otimes_R\enveloping{B}$,
let \begin{equation}\label{Yrhs}
\mathcal{Y}_x=(\id\otimes\pbwb^\top)\circ R_x^\top
\circ(\id\otimes\pbwb^\top)\inv
.\end{equation}
According to Lemma~\ref{hedgehog}, the operator $R_x^\top$ acts
on the algebra $\jet{\cB}\cong\wa\otimes_R\jet{B}$
in the manner of a $\wa$-linear differential operator
and, consequently, the operator $\mathcal{Y}_x$ acts on the algebra
$\sections{\Lambda^{\bullet}A\dual\otimes\hat{S}(B\dual)}\cong C^\infty(\cM)$
in the manner of a $\wa$-linear differential operator.
In other words, $\mathcal{Y}_x$ is a formal vertical
differential operator on $\cB$, i.e.\ an element of
\[ \sections{\Lambda A\dual\otimes\hat{S}(B\dual)\otimes SB}
\isomorphism \wa\otimes_R\verticalD(B) .\]

According to Corollary~\ref{cor:Q1001}, for all $b\in\sections{B}$ and
$\alpha\otimes\varsigma\in\sections{\Lambda A\dual\otimes\hat{S}(B\dual)}$,
we have
\[ \interior{b}Q^{1,0}(\alpha\otimes\varsigma)
=\nabla^{\Bott}_b\alpha\otimes\varsigma+\alpha\otimes\cnt_b\varsigma .\]
By commutativity of diagram~\eqref{eq:Gconnection},
we have \[ \cnt_b\varsigma=
\pbwb^\top\circ\nablat^G_b\circ(\pbwb^\top)\inv(\varsigma) ,\]
and it then follows from Equation~\eqref{eq:G} that
\begin{equation}\label{summit}
\interior{b}Q^{1,0}=	
(\id\otimes\pbwb^\top)\circ\nablab_{1\otimes b}
\circ(\id\otimes\pbwb^\top)\inv.
\end{equation}
Equations~\eqref{summit}, \eqref{Yrhs}, and~\eqref{eq:rlcommute} yield
\[ \gerstenhaber{\interior{b}Q^{1,0}}{\mathcal{Y}_x}=(\id\otimes\pbwb^\top)\circ
\gerstenhaber{\nablab_{1\otimes b}}{R_x^\top}\circ(\id\otimes\pbwb^\top)\inv=0 \]
for all $b\in\sections{B}$.
Hence, we obtain \[ \gerstenhaber{Q^{1,0}}{\mathcal{Y}_x}=0 .\]

It is not difficult to check that the subspace
\[ \sections{\Lambda A\dual\otimes\hat{S}(B\dual)\otimes SB}
\isomorphism \sections{p^\top(\Lambda^\bullet A\dual)}\otimes_R\verticalD(B) \]
of $\sections{\Lambda L\dual}\otimes_R\verticalD(B)$
is contained in the kernel of $\etendu{h}$
and is stable under $\gerstenhaber{d^{\Bott}_A+X^{0,1}}{\argument}$.
Therefore, since
\[ Q=Q^{0,1}+Q^{1,0}; \qquad Q^{0,1}=d^{\Bott}_A+X^{0,1};
\qquad\text{and}\qquad \gerstenhaber{Q^{1,0}}{\mathcal{Y}_x}=0, \]
we conclude that
\[ \etendu{h}(\mathcal{Y}_x)=0 \qquad\text{and}\qquad
\etendu{h}(\gerstenhaber{Q}{\mathcal{Y}_x})
=\etendu{h}(\gerstenhaber{d^{\Bott}_A+X^{0,1}}{\mathcal{Y}_x})=0 .\]

Let $\{\chi_i\}_{i=1,\ldots,r}$ and $\{\partial_j\}_{j=1,\ldots,r}$
be a pair dual local frames for the vector bundles $B\dual$ and $B$ respectively.
Then, with the usual multi-indices notations,
$\{\chi^I\}_{I\in\mathbb{N}^r}$ and $\{\partial^J\}_{J\in\mathbb{N}^r}$
are the corresponding dual local frames for $\hat{S}(B\dual)$
and $S(B)$ respectively.
Locally, every formal vertical differential operator
$\xi\in\sections{\Lambda A\dual\otimes\hat{S}(B\dual)\otimes SB}$
on the dg vector bundle $\cB\to A[1]$
can be written as a linear combination
$\xi=\sum_{I,J\in\NN^r}\xi_{I,J}\otimes\chi^I\otimes\partial^J$
with coefficients $\xi_{I,J}=\duality{\xi(\chi^J)}{\partial^I}$
in $\sections{\Lambda A\dual}$.
Furthermore, we have
\begin{equation}\label{eq:Turku}
\etendu{\sigma}(\xi)=\sum_{J\in\NN^r}\xi_{0,J}\otimes\pbwb(\partial^J)
\qquad\text{with}\qquad \xi_{0,J}=\duality{\xi(\chi^J)}{1}
.\end{equation}
In particular, since
\[ \mathcal{Y}_{\alpha\otimes u}
= (\id\otimes\pbwb^\top)\circ R_{\alpha\otimes u}^\top
\circ(\id\otimes\pbwb^\top)\inv
= \big((\id\otimes\pbwb\inv)\circ R_{\alpha\otimes u}
\circ(\id\otimes\pbwb)\big)^\top ,\]
it follows that
\begin{multline*}
\duality{\mathcal{Y}_{\alpha\otimes u}(\chi^J)}{1}
=\duality{\chi^J}{(\id\otimes\pbwb\inv)\circ R_{\alpha\otimes u}
\circ(\id\otimes\pbwb)(1)} \\
=\duality{\chi^J}{(\id\otimes\pbwb\inv)\circ R_{\alpha\otimes u}(1)}
=\alpha\cdot\duality{\chi^J}{\pbwb\inv(1\cdot u)}
=\alpha\cdot\duality{\chi^J}{\pbwb\inv(u)}
\end{multline*}
and, according to Equation~\eqref{eq:Turku},
\begin{multline*} \etendu{\sigma}(\mathcal{Y}_{\alpha\otimes u})
=\sum_{J\in\NN^r} \duality{\mathcal{Y}_{\alpha\otimes u}(\chi^J)}{1}
\otimes\pbwb(\partial^J) =\sum_{J\in\NN^r} \alpha\cdot\duality{\chi^J}{\pbwb\inv(u)}
\otimes\pbwb(\partial^J) \\ =\alpha\otimes\pbwb\left( \sum_{J\in\NN^r}
\duality{\chi^J}{\pbwb\inv(u)}\cdot\partial^J\right)
=\alpha\otimes\pbwb\left(\pbwb\inv(u)\right)=\alpha\otimes u,
\end{multline*}
for all $\alpha\otimes u\in\sections{\Lambda A\dual}\otimes_R\enveloping{B}$.
Hence, we have $\etendu{\sigma}(\mathcal{Y}_x)=x$.

It follows from Proposition~\ref{Mandalay3} that
$\mathcal{Y}_x=\etendu{\perturbed{\tau}}(x)$.
\end{proof}

The following proposition will play a key role in the proof of
Theorem~\ref{thm:Berlin}.

\begin{proposition}\label{pro:Metz}
In the contraction~\eqref{eq:Metz}, the cochain map
\[ \etendu{\perturbed{\tau}} : \wa\otimes_R\enveloping{B} \to
\sections{\Lambda^\bullet L\dual}\otimes_R\verticalD(B) \]
respects the algebra and the coalgebra structures
as well as the counit maps.
Hence it realizes a morphism of dg Hopf algebroids
from $\enveloping{\cB}$ to $\enveloping{\cF}$.
\end{proposition}

\begin{proof}
It follows immediately from Lemma~\ref{lem:tauR} that
$\etendu{\perturbed{\tau}}$ is a morphism of algebras:
\begin{multline*}
\etendu{\perturbed{\tau}}(x_1 x_2)
=(\id\otimes\pbwb^\top)\circ R_{x_1 x_2}^\top
\circ(\id\otimes\pbwb^\top)\inv \\
=(\id\otimes\pbwb^\top)\circ R_{x_1}^\top\circ R_{x_2}^\top\circ(\id\otimes\pbwb^\top)\inv
=\etendu{\perturbed{\tau}}(x_1)\cdot\etendu{\perturbed{\tau}}(x_2)
.\end{multline*}
Proposition~\ref{taucoalgebra} established that
$\etendu{\perturbed{\tau}}$ is a morphism of coalgebras.
It is also clear that $\etendu{\perturbed{\tau}}$ respects the counit maps.
\end{proof}

We are now ready to prove Theorem~\ref{thm:Berlin}.

\begin{proof}[Proof of Theorem~\ref{thm:Berlin}]
Since $L=A\bowtie B$ is a matched pair, as vector spaces,
$\tot\big(\wa\otimes_R\Dpoly{\diamond}\big)$
in Proposition~\ref{Dhaka} are isomorphic to
$\tot\big(\wa\otimes_R\enveloping{B}^{\diamond+1}\big)$.
According to Proposition~\ref{Dhaka}, the cochain maps $\etendu{\perturbed{\tau}}$
respects the cup products.
Therefore, it suffices to prove that $\etendu{\perturbed{\tau}}$ in~\eqref{eq:Dhaka}
respects the Lie algebra structures --- the Lie bracket on
$\tot\big(\wa\otimes_R\enveloping{B}^{\diamond+1}\big)$
is the Gerstenhaber bracket of the dg Lie algebroid $\cB\to A[1]$
as in Proposition~\ref{pro:zurich1}.
We know from the general theory of dg Lie algebroids --- see Section~\ref{Najaf}
--- that the Gerstenhaber bracket of a dg Lie algebroid
is completely determined by its multiplication and comultiplication
as shown by Equations~\eqref{hazmat} and~\eqref{eq:pre-Lie}.
The conclusion thus follows immediately from Proposition~\ref{pro:Metz}.
\end{proof}

\appendix

\section{Semifull algebra contractions}

Let $(V,d_V)$ and $(W,d_W)$ be complexes: recall that a \emph{contraction}
of $(V,d_V)$ onto $(W,d_W)$ is the data of dg morphisms $\tau:W\to V$,
$\sigma:V\to W$ and a contracting (degree minus one) homotopy $h:V\to V$ such that
\[ \sigma\tau=\operatorname{id}_W,\qquad \tau\sigma-\operatorname{id}_V
= d_Vh+hd_V,\qquad h\tau=0,\qquad\sigma h=0,\qquad h^2=0. \]

In the following well known \emph{homological perturbation lemma}
\cite{MR1103672,MR1109665} we assume that $V$ and $W$ are equipped
with complete exhaustive decreasing filtrations $F^\bullet V$ and $F^\bullet W$
(we need this hypothesis to ensure convergence of the infinite sums
in the following Lemma~\ref{HPL}), i.e.\
\[ V=F^0 V\supset F^1 V\supset\cdots\supset F^p V\supset \cdots \]
and the natural map $V\to \underleftarrow{\lim}_p V/F^p V$ is an isomorphism,
similarly for $W$. Furthermore, we assume that the differentials $d_V,d_W$
and the contraction data $(\tau,\sigma,h)$ preserve the filtrations.
Recall that a \emph{perturbation} of the differential $d_V$ on $V$
is a degree one map $\perturbation_V:V\to V$ such that $(d_V+\perturbation_V)^2=0$.

\begin{lemma}[Homological Perturbation]\label{HPL}
Given a perturbation $\perturbation_V:V\to V$ of the differential $d_V$ on $V$
such that $\perturbation_V(F^p V)\subset F^{p+1} V$, for all $p\geqslant 0$,
the endomorphism $\perturbation_W$ of $W$ defined by
\[ \perturbation_W := \sum_{l\geqslant 0} \sigma(\perturbation_V h)^l\perturbation_V\tau
= \sum_{l\geqslant 0} \sigma\perturbation_V (h\perturbation_V)^l\tau \]
is a perturbation of the differential $d_W$ on $W$, and
the triple of maps
\[ \perturbed{\tau} := \sum_{l\geqslant 0}(h\perturbation_V)^l\tau ,\qquad
\perturbed{\sigma}:= \sum_{l\geqslant 0} \sigma(\perturbation_V h)^l ,\qquad
\perturbed{h} := \sum_{l\geqslant 0} h(\perturbation_V h)^l
= \sum_{l\geqslant 0} (h\perturbation_V)^l h \]
is a contraction of $(V,d_V+\perturbation_V)$ onto $(W,d_W+\perturbation_W)$.
\end{lemma}

In the following definition, taken from~\cite{MR1782593},
we shall assume given two associative
(but not necessarily graded commutative)
products $\mu_V:V^{\otimes2}\to V$ and $\mu_W:W^{\otimes 2}\to W$:
we do not require a priori $d_V$ and $d_W$ to be algebra derivations.

\begin{definition}\label{def:semifull}
A contraction $(\tau,\sigma,h)$ of $(V,d_V)$ onto $(W,d_W)$
is a \emph{semifull algebra contraction} if the following identities are satisfied
\begin{gather*} h\mu_V(h\otimes h)=0,\qquad h\mu_V(h\otimes \tau)=0,\qquad
h\mu_V(\tau\otimes h)=0,\qquad h\mu_V(\tau\otimes\tau)=0, \\
\sigma\mu_V(h\otimes h)=0,\qquad \sigma\mu_V(h\otimes \tau)=0,\qquad
\sigma\mu_V(\tau\otimes h)=0,\qquad \sigma\mu_V(\tau\otimes\tau)=\mu_W
.\end{gather*}
\end{definition}

\begin{lemma}\label{lem:semifullstable}
Given (i) a pair of cochain complexes $(V,d_V)$ and $(W,d_W)$
carrying additional graded associative algebra structures;
(ii) a semifull algebra contraction $(\tau,\sigma,h)$ of $(V,d_V)$ onto $(W,d_W)$;
and (iii) a perturbation $\perturbation_V:V\to V$ of the differential $d_V$
satisfying $\perturbation_V(F^p V)\subset F^{p+1}V$ for all $p\geqslant 0$,
the perturbed contraction
$(\perturbed{\tau},\perturbed{\sigma},\perturbed{h})$
is a semifull algebra contraction of
$(V,d_V+\perturbation_V)$ onto $(W,d_W+\perturbation_W)$.
\end{lemma}

\begin{proof} Straightforward.
\end{proof}

\begin{lemma}\label{lem:semifulltau}
Given a semifull algebra contraction $(\tau,\sigma,h)$
of $(V,d_V)$ onto $(W,d_W)$, if $d_V$ is an algebra derivation,
then $\tau:W\to V$ is a morphism of algebras.
\end{lemma}

\begin{proof}
Since $\sigma\mu_V(\tau\otimes\tau)=\mu_W$
and $h\mu_V(\tau\otimes\tau)=0$, we have
\begin{multline*}
\mu_V(\tau\otimes\tau)=(\tau\sigma-d_V h-h d_V)\mu_V(\tau\otimes\tau) \\
=\tau\sigma\mu_V(\tau\otimes\tau)-d_V h\mu_V(\tau\otimes\tau)-h d_V\mu_V(\tau\otimes\tau) \\
=\tau\mu_W-h\mu_V(d_V\otimes\id_V+\id_V\otimes d_V)(\tau\otimes\tau) \\
=\tau\mu_W-h\mu_V(\tau\otimes\tau)(d_W\otimes\id_W+\id_W\otimes d_W)
=\tau\mu_W
\qedhere\end{multline*}
\end{proof}

\printbibliography
%\bibliographystyle{amsplain}
%\bibliography{references}
\end{document}